\definecolor{DarkGreen}{rgb}{0.1,0.5,0.1}
\definecolor{DarkRed}{rgb}{0.5,0.1,0.1}
\definecolor{DarkBlue}{rgb}{0.1,0.1,0.5}
\definecolor{Gray}{rgb}{0.2,0.2,0.2}
\numberwithin{equation}{section}
\DeclareMathOperator*{\E}{{\mathbb{E}}}
\DeclareMathOperator{\nnz}{nnz}
\DeclareMathOperator{\rank}{rank}
\declaretheorem[numberwithin=section]{theorem}
\declaretheorem[sibling=theorem]{lemma}
\declaretheorem[style=definition]{definition}
\declaretheorem[numbered=no,style=definition,name=Fact]{fact*}
\newtheorem*{theorem*}{Theorem}
\newtheorem*{corollary*}{Corollary}
\newtheorem*{proposition*}{Proposition}
\newtheorem*{lemma*}{Lemma}
\newtheorem*{claim*}{Claim}
\newtheorem*{problem*}{Problem}
\newcommand{\BB}[1]{\mathbb{#1}}
\newcommand{\bigO}[1]{\mathcal{O}\left( #1 \right)}
\newcommand{\bigOt}[1]{\widetilde{\mathcal{O}}\left( #1 \right)}
\newcommand{\R}{\BB{R}}
\newcommand{\underflow}[2]{\underset{\kern-60mm \overbrace{#1} \kern-60mm}{#2}}
\newcommand{\tsr}[1]{\pmb{\mathcal{#1}}}
\newcommand{\vcr}[1]{\mathbf{#1}}
\newcommand{\mat}[1]{\mathbf{#1}}
\newcommand{\fnrm}[1]{{\| #1 \|}_F}
\newcommand{\inti}[2]{\{{#1},\ldots, {#2}\}}
\newcommand{\qtext}[1]{\quad\text{#1}\quad}
\patchcmd{\@maketitle}{\LARGE \@title}{\fontsize{16}{20}\selectfont\@title}{}{}
\title{
Fast and accurate randomized algorithms for low-rank tensor decompositions\thanks{
This work is supported by the US NSF OAC via award No.\ 1942995.}
}
\author{Linjian Ma\\
Department of Computer Science\\
University of Illinois at Urbana-Champaign\\
lma16@illinois.edu

\and Edgar Solomonik\\
Department of Computer Science\\
University of Illinois at Urbana-Champaign\\
solomon2@illinois.edu}
\date{}
\begin{document}
\maketitle

\begin{abstract}
Low-rank Tucker and CP tensor decompositions are powerful tools in data analytics. The widely used alternating least squares (ALS) method, which solves a sequence of over-determined least squares subproblems, is costly for large and sparse tensors. We propose a fast and accurate
sketched ALS algorithm for Tucker decomposition, which 
solves a sequence of sketched rank-constrained linear least squares subproblems. 
Theoretical sketch size upper bounds are provided to achieve $\bigO{\epsilon}$ relative error for each subproblem with two sketching techniques, TensorSketch and leverage score sampling.
Experimental results show that this new ALS algorithm, combined with a new initialization scheme based on randomized range finder, yields up to $22.0\%$ relative decomposition residual improvement compared to the state-of-the-art sketched
randomized algorithm for Tucker decomposition of various synthetic and real datasets. 
This Tucker-ALS algorithm is further used to accelerate CP decomposition, 
by using randomized Tucker compression followed by CP decomposition of the Tucker core tensor.
Experimental results show that this algorithm not only converges faster, but also yields more accurate CP decompositions. 
\end{abstract}

\section{Introduction}

Tensor decompositions~\cite{kolda2009tensor} are general
tools for compression and approximation of high dimensional data, and are widely used in both scientific computing~\cite{pazner2018approximate,hohenstein2012tensor,hummel2017low} and machine learning~\cite{anandkumar2014tensor,sidiropoulos2017tensor}. 
In this paper, we focus on Tucker decomposition~\cite{tucker1966some} and CANDECOMP/PARAFAC (CP) decomposition~\cite{hitchcock1927expression,harshman1970foundations}. Both decompositions are higher order generalizations of the matrix singular value decomposition (SVD). Tucker decomposition decomposes the input tensor
into a core tensor along with factor matrices composed of orthonormal columns. CP decomposition decomposes the input tensor into a sum of outer products of vectors, which may also be assembled into factor matrices. 

The alternating least squares
(ALS) method is most widely used to compute both CP and Tucker decompositions. 
The ALS algorithm consists of \textit{sweeps}, and each sweep updates every factor matrix once in a fixed order. 
The ALS method for Tucker decomposition, called the \textit{higher-order orthogonal iteration} (HOOI)~\cite{andersson1998improving,de2000best,kolda2009tensor}, updates one of the factor matrices along with the core tensor at a time. 
Similarly, each update procedure in the ALS algorithm for CP decomposition (CP-ALS) updates one of the factor matrices.
For both decompositions, each optimization subproblem guarantees decrease of the decomposition residual.

In this work, we consider decomposition of
order $N$ tensors ($\tsr{T}$) that are large in dimension size ($s$) and can be potentially sparse.
We focus on the problem of computing low-rank (with target rank $R \ll s$ and $R \ll \nnz(\tsr{T})$) decompositions for such tensors
via ALS, which is often used for extracting principal component information from
large-scale datasets. For Tucker decomposition, ALS is bottlenecked by the operation called
\textit{the tensor times matrix-chain} (TTMc).
For CP decomposition, ALS is bottlenecked by the operation called \textit{the matricized tensor-times Khatri-Rao product} (MTTKRP). 
Both TTMc and MTTKRP have a per-sweep cost of $\Omega(\nnz(\tsr{T})R)$ \cite{smith2015splatt}.
Consequently, the per-sweep costs of both HOOI and CP-ALS are proportional to the number of nonzeros in the tensor, which are expensive for large tensors with billions of nonzeros.

Recent works have applied
different randomized techniques to accelerate both CP and Tucker decompositions. 
Based on sketching techniques introduced for low-rank matrix approximation~\cite{woodruff2014sketching},
several sketching
algorithms have been developed for CP decomposition~\cite{larsen2020practical,aggour2020adaptive,cheng2016spals,zhou2014decomposition}.
These algorithms accelerate CP-ALS by approximating
the original highly over-determined linear least squares problems by smaller sketched linear least squares problems.
These techniques have been shown to greatly improve the efficiency compared to the original ALS. 
For Tucker decomposition, several randomized algorithms based on random projection~\cite{che2021randomized,ahmadi2020randomized,che2019randomized,minster2020randomized,zhou2014decomposition,sun2018tensor} have been developed to accelerate \textit{higher-order singular value decomposition} (HOSVD)~\cite{de2000multilinear,tucker1966some}. However, these methods calculate the core tensor via TTMc among the input tensor and all the factor matrices, which 
incurs a cost of $\Omega(\nnz(\tsr{T})R)$ for sparse tensors.

Becker and Malik~\cite{malik2018low} introduced a sketched
ALS algorithm for Tucker decomposition, which avoids the expensive cost of
TTMc. 
Unlike the traditional HOOI, 
each sweep of this ALS scheme contains $N+1$ subproblems, where only one of the factor matrices or the core tensor is updated in each subproblem. 
This scheme is easier to analyze theoretically, since each subproblem is an \textit{unconstrained} linear least squares problem, which can be efficiently solved via sketching. However, the scheme produces decompositions that are generally less effective 
than HOOI.

\subsection{Our Contributions}
In this work, we propose a new sketched
ALS algorithm for Tucker decomposition. Different from Becker and Malik~\cite{malik2018low}, our ALS scheme is the same as HOOI, where one of the factor matrices along with the core tensor are updated in each subproblem. This guarantees the algorithm can reach the same accuracy as HOOI with sufficient sketch size. Experimental results show that it provides more accurate results compared to those in \cite{malik2018low}.

In this scheme, each subproblem is a sketched \textit{rank-constrained} linear least squares problem, with the left-hand-side matrix with size $s^{N-1}\times R^{N-1}$  composed of orthonormal columns.
To the best of our knowledge, the relative error analysis of sketching techniques for this problem have not been discussed in the literature. Existing works either only provide sketch size upper bounds for the relaxed problem~\cite{pilanci2016iterative}, where rank constraint is relaxed with a nuclear norm constraint, or provide upper bounds for general constrained problems, which are too loose~\cite{woodruff2014sketching}.
We provide tighter sketch size upper bounds to achieve $\bigO{\epsilon}$ relative error with two state-of-the-art sketching techniques, TensorSketch~\cite{pagh2013compressed} and leverage score sampling~\cite{drineas2012fast}. 

With leverage score sampling, our analysis shows that 
 with probability at least $1-\delta$,
the sketch size of $\bigO{R^{N-1}/(\epsilon^2\delta) }$ is sufficient for results with $\bigO{\epsilon}$-relative error. With TensorSketch, the sketch size upper bound is $\bigO{(R^{N-1} \cdot 3^{N-1})/\delta \cdot (R^{N-1}  + 1/\epsilon^2)}$, at least $\bigO{3^{N-1}}$ times that for leverage score sampling. For both techniques, our bounds are at most $\bigO{1/\epsilon}$ times the sketch size upper bounds for the unconstrained linear least squares problem.

The upper bounds suggest that under the same accuracy criteria, leverage score sampling potentially needs smaller sketch size for each linear least squares problem and thus can be more efficient than TensorSketch. Therefore, with the same sketch size, the accuracy with leverage score sampling can be better. However, with the standard random initializations for factor matrices, leverage score sampling can perform poorly on tensors with high coherence~\cite{candes2009exact} (the orthogonal basis for the row space of each matricization of the input tensor has large row norm variability), making it less robust than TensorSketch.
To improve the robustness of leverage score sampling, we introduce an algorithm that uses the randomized range finder (RRF)~\cite{halko2011finding} to initialize the factor matrices. 
The initialization scheme uses the composition of CountSketch and Gaussian random matrix as the RRF embedding matrix, which
only requires one pass over the non-zero elements of the input tensor. Our experimental results show that the leverage score sampling based randomized algorithm combined with this RRF scheme performs well on tensors with high coherence. 

For $R\ll s$, our new sketching based algorithm for Tucker decomposition can also be used to accelerate CP decomposition. Tucker compression is performed first, and then CP decomposition is applied to the core tensor~\cite{zhou2014decomposition,bro1998improving,erichson2020randomized}.
Since the per-sweep costs for both sketched Tucker-ALS and sketched CP-ALS are comparable, and Tucker-ALS often needs much fewer sweeps than CP-ALS (Tucker-ALS typically converges in less than 5 sweeps based on our experiments), this Tucker + CP method can be more efficient than directly applying randomized CP decomposition~\cite{larsen2020practical,cheng2016spals} on the input tensor.

In summary, this paper makes the following contributions.
\begin{itemize}[itemsep=0pt,leftmargin=*]
    \item We introduce a new sketched ALS algorithm for Tucker decomposition, which contains a sequence of sketched rank-constrained linear least squares subproblems. We provide theoretical upper bounds for the sketch size, and experimental results show that the algorithm provides up to $22.0\%$ relative decomposition residual improvement compared to the previous work.
    \item We provide detailed comparison of TensorSketch and leverage score sampling in terms of efficiency and accuracy. Our theoretical analysis shows that leverage score sampling is better in terms of both metrics.
    \item We propose an initialization scheme based on RRF that improves the accuracy of leverage score sampling based sketching algorithm on tensors with high coherence.
    \item We show that CP decomposition can be more efficiently and accurately calculated based on the sketched Tucker + CP method, compared to directly performing sketched CP-ALS on the input tensor.
\end{itemize}

\subsection{Organization}
\cref{sec:background} introduces notation used throughout the paper, ALS algorithms for Tucker and CP decompositions, and previous work. \cref{sec:sketch} introduces TensorSketch and leverage score sampling techniques. In \cref{sec:sketch-rcls}, we provide sketch size upper bounds for the  rank-constrained linear least squares subproblems in sketched Tucker-ALS. Detailed proofs are presented in \cref{appendix:detailed_proofs}. In \cref{sec:init}, we introduce the initialization scheme for factor matrices via randomized range finder. In \cref{sec:cost}, we present detailed cost analysis for our new sketched ALS algorithms. We present experimental results in \cref{sec:exp} and conclude in \cref{sec:conclu}.

\section{Background}
\label{sec:background}
\subsection{Notation}
\label{sec:notations}

Our analysis makes use of tensor algebra in both element-wise equations and specialized notation for tensor operations~\cite{kolda2009tensor}.
Vectors are denoted with bold lowercase Roman letters (e.g., $\vcr{v}$), matrices are denoted with bold uppercase Roman letters (e.g., $\mat{M}$), and tensors are denoted with bold calligraphic font (e.g., $\tsr{T}$). 
An order $N$ tensor corresponds to an $N$-dimensional array. 
Elements of vectors, matrices, and tensors are denoted in parentheses, e.g., $\vcr{v}(i)$ for a vector $\vcr{v}$, $\mat{M}(i,j)$ for a matrix $\mat{M}$, and $\tsr{T}(i,j,k,l)$ for an order 4 tensor $\tsr{T}$.
The $i$th column of $\mat{M}$ is denoted by $\mat{M}(:,i)$, and the $i$th row is denoted by $\mat{M}(i,:)$. Parenthesized superscripts are used to label different vectors, matrices and tensors (e.g. $\tsr{T}^{(1)}$ and $\tsr{T}^{(2)}$ are unrelated tensors). Number of nonzeros of the tensor $\tsr{T}$ is denoted by $\nnz(\tsr{T})$.

The pseudo-inverse of matrix $\mat{A}$ is denoted with $\mat{A}^\dagger$. The Hadamard product of two matrices is denoted with $*$. The outer product of two or more vectors is denoted with $\circ$. The Kronecker product of two vectors/matrices is denoted with $\otimes$.
For matrices $\mat{A}\in \mathbb{R}^{m\times k}$ and $\mat{B}\in \mathbb{R}^{n\times k}$, their Khatri-Rao product results in a matrix of size $(mn)\times k$ defined by
$
   \mat{A}\odot \mat{B} = [\mat{A}(:,1)\otimes \mat{B}(:,1),\ldots, \mat{A}(:,k)\otimes \mat{B}(:,k)] .
$
The mode-$n$ tensor times matrix of an order $N$ tensor $\tsr{T} \in \mathbb{R}^{s_1\times \cdots \times s_N}$ with a matrix $ \textbf{A}\in \mathbb{R}^{J\times s_n}$ is denoted by $\tsr{T}\times_n \mat{A}$, whose output size is $s_1\times\cdots\times s_{n-1}\times J\times s_{n+1}\times\cdots\times s_N$.
Matricization is the process of unfolding a tensor into a matrix. The mode-$n$ matricized version of $\tsr{T}$ is denoted by $\textbf{T}_{(n)}\in \mathbb{R}^{s_n\times K}$ where $K=\prod_{m=1,m\neq n}^N s_m$. 
We use $[N]$ to denote $\{1,\ldots, N\}$. $\widetilde{\mathcal{O}}$ denotes the asymptotic cost with logarithmic factors ignored.

\subsection{Tucker Decomposition with ALS}
\label{sec:bg-tucker}
  We review the ALS method for computing Tucker decomposition of an input tensor~\cite{tucker1966some}.
Throughout the analysis we assume the input tensor has order $N$ and size $s \times \cdots \times s$, and the Tucker ranks are $R\times \cdots \times R$.
Tucker decomposition approximates a tensor by a core tensor contracted along each mode with matrices that have orthonormal columns. The goal of Tucker decomposition is to minimize the objective function,
  \begin{equation}
  f(\tsr{C}, \mat{A}^{(1)}, \ldots, \mat{A}^{(N)}) := \frac{1}{2} 
  \left\|
  \tsr{T} - \tsr{C}\times_1\mat{A}^{(1)}\times_2\mat{A}^{(2)}\cdots\times_N\mat{A}^{(N)}
  \right\|_F^2.      
  \end{equation}
The core tensor $\tsr{C}$ is of order $N$ with dimensions $R\times \cdots \times R$.
Each matrix $\textbf{A}^{(n)} \in \mathbb{R}^{s\times R}$ for $n\in[N]$ has orthonormal columns.
The ALS method for Tucker decomposition~\cite{andersson1998improving,de2000best,kolda2009tensor}, called the \textit{higher-order orthogonal iteration} (HOOI), proceeds by updating one of the factor matrices along with the core tensor at a time. The $n$th subproblem can be written as
\begin{equation}
    \min_{\tsr{C}, \mat{A}^{(n)}} \frac{1}{2} \left\|
    \mat{P}^{(n)} \mat{C}_{(n)}^T\mat{A}^{(n)T} - \mat{T}_{(n)}^T
    \right\|_F^2,
\end{equation}
where $\mat{P}^{(n)} = \mat{A}^{(1)} \otimes  \cdots \otimes \mat{A}^{(n-1)} \otimes \mat{A}^{(n+1)}\otimes  \cdots \otimes \mat{A}^{(N)}$.
This problem can be formulated as a rank-constrained linear least squares problem,
\begin{equation}\label{eq:tucker_ls}
    \min_{\mat{B}^{(n)}} \frac{1}{2} \left\|
    \mat{P}^{(n)} \mat{B}^{(n)} - \mat{T}_{(n)}^T
    \right\|_F^2, \quad \text{such that} \quad \text{rank}(\mat{B}^{(n)})\leq R.
\end{equation}
$\mat{A}^{(n)}$ corresponds to the right singular vectors of the optimal $\mat{B}^{(n)}$, while $\mat{C}_{(n)}^T = \mat{B}^{(n)}\mat{A}^{(n)}$. Since $\mat{P}^{(n)}$ contains orthonormal columns, the optimal $\mat{B}^{(n)}$ can be obtained by calculating the \textit{Tensor Times Matrix-chain} (TTMc),
\begin{equation}
\label{eq:Yn}
    \tsr{Y}^{(n)}=\tsr{T}\times_1\mat{A}^{(1)T}\cdots\times_{n-1}\mat{A}^{(n-1)T}\times_{n+1}\mat{A}^{(n+1)T}\cdots\times_{N}\mat{A}^{(N)T},
\end{equation}
and taking $\mat{B}^{(n)}$ to be the transpose of the mode-$n$ matricized $\tsr{Y}^{(n)}$, $\mat{Y}_{(n)}^{(n)T}$. Calculating $\tsr{Y}^{(n)}$ costs $\bigO{s^NR}$ for dense tensors and $\bigO{\nnz(\tsr{T})R^{N-1}}$ for sparse tensors. HOOI typically converges in less than 5 sweeps.
Before the HOOI procedure, the factor matrices are often initialized with
the \textit{higher-order singular value decomposition} (HOSVD)~\cite{de2000multilinear,tucker1966some}. HOSVD computes the  truncated SVD of each $\mat{T}_{(n)}\approx\mat{U}^{(n)}\mat{\Sigma}^{(n)}\mat{V}^{(n)T}$, and sets $\mat{A}^{(n)} = \mat{U}^{(n)}$ for $n\in[N]$. 
If performing SVD via randomized SVD~\cite{halko2011finding}, updating $\mat{A}^{(n)}$ for each mode costs $\bigO{s^NR}$ for dense tensors, and costs $\bigO{s^{N-1}R^2 + \nnz(\tsr{T})R}$ for sparse tensors.
The Tucker-ALS algorithm is presented in Algorithm~\ref{tuckerals}.

\begin{algorithm}
\caption{\textbf{Tucker-ALS}: ALS procedure for Tucker decomposition}\label{tuckerals}
\begin{algorithmic}[1]
\label{alg:tucker-als}
\STATE{\textbf{Input: }Tensor $\tsr{T}\in\R^{s_1 \times \cdots\times s_N}$, 
decomposition ranks $\{R_1,\ldots,R_N\}$}
\STATE{Initialize $\{\mat{A}^{(1)}, \ldots , \mat{A}^{(N)} \}$ using HOSVD}
\WHILE{not converged} {
	\FOR{{$n\in [N] $}} {
  \STATE{Update $ \tsr{Y}^{(n)}$ based on \eqref{eq:Yn}}
  \STATE{$   \textbf{A}^{(n)} \leftarrow R_n$  leading left singular vectors of $\textbf{Y}^{(n)}_{(n)} $} 
	}\ENDFOR
  \STATE{$\tsr{C}\leftarrow
  \tsr{Y}^{(N)}\times_{N}\mat{A}^{(N)T}$}
}
\ENDWHILE
\RETURN $\{\tsr{C}, \mat{A}^{(1)}, \ldots , \mat{A}^{(N)} \}$ 
\end{algorithmic}
\end{algorithm}
\subsection{CP Decomposition with ALS}
\label{sec:bg-cp}
CP tensor decomposition~\cite{hitchcock1927expression,harshman1970foundations} decomposes the input tensor into a sum of outer products of vectors. 
Throughout analysis we assume the input tensor has order $N$ and size $s \times \cdots \times s$, and the CP rank is $R$.
The goal of CP decomposition is to minimize the objective function,
  \begin{equation}
  f(\mat{A}^{(1)}, \ldots, \mat{A}^{(N)}) := \frac{1}{2} 
  \left\|
  \tsr{T} - \sum_{r=1}^{R} \mat{A}^{(1)}(:,r)\circ \cdots \circ \mat{A}^{(N)}(:,r)
  \right\|_F^2,      
  \end{equation} where $\mat{A}^{(i)}\in\R^{s\times R}$ for $i\in[N]$ are called factor matrices.
CP-ALS is the mostly widely used algorithm to get the factor matrices. 
In each ALS sweep, we solve $N$ subproblems, and the objective for the update of $\mat{A}^{(n)}$, with $n\in[N]$, is expressed as,
\begin{equation}
\label{eq:subproblem}
    \mat{A}^{(n)} = \arg\min_{\mat{A}}\frac{1}{2}\left\|\mat{P}^{(n)}\mat{A}{}^T - \mat{X}_{(n)}^T\right\|^2_F,
\end{equation}
 where 
$\mat{P}^{(n)} = \mat{A}^{(1)} \odot \cdots \odot  \mat{A}^{(n-1)}  \odot  \mat{A}^{(n+1)} \odot \cdots \odot \mat{A}^{(N)}.$
 Solving the linear least squares problem above has a cost of $\bigO{s^NR}$. For instance, when solving via normal equations the term $\mat{P}^{(n)T}\mat{X}_{(n)}^T$, which is called MTTKRP, needs to be calculated, and it costs $\bigO{s^NR}$ for dense tensors and $\bigO{\nnz(\tsr{T})R}$ for sparse tensors.
We show the CP-ALS algorithm in~\cref{alg:cp_als}.

A major disadvantage of
CP-ALS is its slow convergence. There are many cases where CP-ALS 
takes a large number of sweeps to converge when high resolution is necessary~\cite{mitchell1994slowly}.
When $R < s$, the procedure can be accelerated by performing Tucker-ALS first, which typically converges in fewer sweeps, and then computing a CP decomposition of the core tensor~\cite{carroll1980candelinc,zhou2014decomposition,bro1998improving}, which only has $\bigO{R^N}$ elements.

\begin{algorithm}
    \caption{\textbf{CP-ALS}: ALS procedure for CP decomposition}
\label{alg:cp_als}
\begin{algorithmic}[1]
\STATE{\textbf{Input: }Tensor $\tsr{T}\in\mathbb{R}^{s_1\times\cdots \times s_N}$, rank $R$
}
\STATE{Initialize $\{ \textbf{A}^{(1)}, \ldots , \textbf{A}^{(N)} \}$ as uniformly distributed random matrices within $[0,1]$}
\WHILE{not converged}
\FOR{\texttt{$n\in[N] $}}
\STATE{Update $ \mat{A}^{(n)}$ via solving~\cref{eq:subproblem}}
\ENDFOR
\ENDWHILE
\RETURN $\{ \textbf{A}^{(1)}, \ldots , \textbf{A}^{(N)} \}$
\end{algorithmic}
\end{algorithm}
\subsection{Previous Work}

Randomized algorithms have been applied to both Tucker and CP decompositions in several previous works. For Tucker decomposition, Ahmadi-Asl et al. \cite{ahmadi2020randomized} review a variety of random projection, sampling and sketching based randomized algorithms. 
Methods introduced in \cite{che2021randomized,che2021randomized,che2019randomized,zhou2014decomposition,sun2018tensor} accelerate the traditional HOSVD/HOOI via random projection, where factor matrices are updated based on performing SVD on the matricization of the randomly projected input tensor.
For these methods, random projections are all performed based on Gaussian embedding matrices, 
and the core tensor is calculated via TTMc among the input tensor and all the factor matrices, which costs $\Omega(\nnz(\tsr{T})R)$ and is computationally inefficient for large sparse tensors.
Sun et al. \cite{sun2020low} introduce randomized algorithms for Tucker decompositions for streaming data.

The most similar work to ours is Becker and Malik~\cite{malik2018low}. 
This work computes Tucker decomposition via a sketched ALS scheme where in each optimization subproblem, one of the factor matrices or the core tensor is updated. They also solve each sketched linear least squares subproblem via TensorSketch.
Our new scheme provides more accurate results compared to this method. 
Another work that is closely relevant to us is \cite{minster2020randomized}. This work introduces structure-preserving decomposition, which is similar to Tucker decomposition but the factor matrices are not necessary orthogonal, and the entries of the core tensor are explicitly taken from the original tensor. The authors design an algorithm based on rank-revealing QR~\cite{gu1996efficient}, which is efficient for sparse tensors, to calculate the decomposition. However, their experimental results show that the relative error of the algorithm for sparse tensors is much worse than that of the traditional HOSVD~\cite{minster2020randomized}.

Several works discuss algorithms for sparse Tucker decomposition. Oh et al.~\cite{oh2018scalable} propose PTucker, which provides algorithms for scalable sparse Tucker decomposition. Kaya and Ucar~\cite{kaya2016high} provide parallel algorithms for sparse Tucker decompositions. Li et al.~\cite{li2020sgd_tucker} introduce SGD-Tucker, which uses stochastic gradient descent to perform Tucker decomposition of sparse tensors.

For CP decomposition, Battaglino et al.~\cite{battaglino2018practical} and Jin et al.~\cite{jin2019faster} introduce a randomized algorithm based on Kronecker fast Johnson-Lindenstrauss Transform (KFJLT) to accelerate CP-ALS. However, KFJLT is effective only for the decomposition of dense tensors.
Aggour et al.~\cite{aggour2020adaptive} introduce adaptive sketching for CP decomposition.
Song et al.~\cite{song2019relative} discuss the theoretical relative error of various tensor decompositions based on sketching. 
The work by Cheng et al.~\cite{cheng2016spals} and Larsen and Kolda~\cite{larsen2020practical} accelerate CP-ALS based on leverage score sampling. 
Cheng et al.~\cite{cheng2016spals} use leverage score sampling to accelerate MTTKRP calculations.
Larsen and Kolda~\cite{larsen2020practical} propose an approximate leverage score sampling scheme for the Khatri-Rao product, and they show with $\bigO{R^{(N-1)}\log(1/\delta)/\epsilon^2 }$ number of samples, each unconstrained linear least squares subproblem in CP-ALS can be solved with $\bigO{\epsilon}$-relative error with probability at least $1-\delta$. 
Zhou et al.~\cite{zhou2014decomposition} and Erichson et al.~\cite{erichson2020randomized} accelerate CP decomposition via performing randomized Tucker decomposition of the input tensor first, and then performing CP decomposition of the smaller core tensor.

Several other works discuss techniques to parallelize and accelerate CP-ALS. Ma and Solomonik \cite{ma2018accelerating,ma2020efficient} approximate MTTKRP within CP-ALS based on information from previous sweeps.
For sparse tensors, parallelization strategies for MTTKRP have been developed both on shared memory systems~\cite{nisa2019load,smith2015splatt} and distributed memory systems~\cite{li2017model,smith2016medium,kaya2018parallel}.

\section{Background on Sketching}
\label{sec:sketch}

Throughout the paper we consider the linear least squares problem,
\begin{equation}\label{eq:ls}
    \min_{\mat{X}\in\mathcal{C}} \frac{1}{2} \left\|
    \mat{P} \mat{X} - \mat{B}
    \right\|_F^2,
\end{equation}
where $\mat{P} = \mat{A}^{(1)} \otimes  \cdots \otimes \mat{A}^{(N)} \in \R^{s^{N} \times R^{N}}$ is a chain of Kronecker products, $N\geq 2$, $\mat{P}$ is dense and $\mat{B}$ is sparse. In each subproblem of Tucker HOOI, 
the feasible region $\mathcal{C}$ contains matrices with the rank constraint, as is shown in \eqref{eq:tucker_ls}. The associated sketched problem is
\begin{equation}
    \min_{\mat{X}\in\mathcal{C}} \frac{1}{2} \left\|
    \mat{S}\mat{P} \mat{X} - \mat{S}\mat{B}
    \right\|_F^2,
\end{equation}
where $\mat{S}\in \R^{m \times s^{N}}$ is the sketching matrix with $m \ll s^{N}$. We refer to $m$ as the sketch size throughout the paper.

The Kronecker product structure of $\mat{P}$ prevents efficient application of widely-used sketching matrices, including Gaussian matrices and CountSketch matrices. For these sketching matrices, the computation of $\mat{S}\mat{P}$ requires forming $\mat{P}$ explicitly, which has a cost of $\bigO{s^{N}R^{N}}$. We consider two sketching techniques, TensorSketch and leverage score sampling, that are efficient for the problem.
With these two sketching techniques, $\mat{S}\mat{P}$ can be calculated without explicitly forming $\mat{P}$, and $\mat{S}\mat{B}$ can be calculated efficiently as well (with a cost of $\bigO{\nnz(\mat{B})}$).

\subsection{TensorSketch}

TensorSketch is a special type of CountSketch, where the hash map is restricted to a specific format to allow fast multiplication of the sketching matrix with the chain of Kronecker products. We introduce the definition of CountSketch and TensorSketch below.

\begin{definition}[CountSketch]\label{def:countsketch}
The CountSketch matrix is defined as $\mat{S} = \mat{\Omega D}\in \R^{m\times n}$, where
\begin{itemize}[itemsep=0pt,leftmargin=*]
    \item $h : [n] \rightarrow [m]$ is a hash map such that $\forall i \in [n]$ and $\forall j \in [m]$, $\Pr[h(i) = j] = 1/m$,
    \item $\mat{\Omega} \in \R^{m\times n}$ is a matrix with $\mat{\Omega}(j,i) = 1$ if $j=h(i)$ $\forall i\in[n]$ and $\mat{\Omega}(j,i) = 0$ otherwise,
    \item $\mat{D} \in \R^{n\times n}$ is a diagonal matrix whose diagonal is a Rademacher vector (each entry is $+1$ or $-1$ with equal probability).
\end{itemize}
\end{definition}

\begin{definition}[TensorSketch~\cite{pagh2013compressed}]
The order $N$ TensorSketch matrix $\mat{S}= \mat{\Omega D}\in \R^{m \times \prod_{i=1}^N s_i}$ is defined based on two hash maps $H$ and $S$ defined below,
\begin{equation}\label{eq:h_ts}
    H : [s_1] \times [s_2] \times \cdots \times [s_N] \rightarrow [m] : (i_1, \ldots , i_N ) \mapsto
\left(
\sum_{n=1}^N
(H_n(i_n) - 1) \mod m
\right) + 1,
\end{equation}
\vspace{-3mm}
\begin{equation}\label{eq:s_ts}
    S : [s_1] \times [s_2] \times \cdots \times [s_N] \rightarrow \{-1, 1\} : (i_1, \ldots , i_N ) \mapsto
    \prod_{n=1}^N S_n(i_n),
\end{equation}
where each $H_n$ for $n\in[N]$ is a 3-wise independent hash map that maps $[s_n]\rightarrow[m]$, and each $S_n$ is a 4-wise independent hash map that maps $[s_n]\rightarrow\{-1,1\}$.
A hash map is $k$-wise independent if any designated $k$ keys are independent random variables.
Two matrices $\mat{\Omega}$ and $\mat{D}$ are defined based on $H$ and $S$, respectively,
\begin{itemize}[itemsep=0pt,leftmargin=*]
    \item $\mat{\Omega} \in \R^{m\times \prod_{i=1}^N s_i}$ is a matrix with $\mat{\Omega}(j,i) = 1$ if $j=H(i)$ $\forall i\in\left[\prod_{i=1}^N s_i\right]$, and $\mat{\Omega}(j,i) = 0$ otherwise,
    \item $\mat{D} \in \R^{n\times n}$ is a diagonal matrix with $\mat{D}(i,i) = S(i)$.
\end{itemize}
Above we use the notation $S(i) = S(i_1,\dots,i_N)$ where $i = i_1 + \sum_{k=2}^N \left(\prod_{\ell=1}^{k-1} s_l\right) \left(i_k-1\right)$, and similar for $H$.
\end{definition}

The restricted hash maps \eqref{eq:h_ts},\eqref{eq:s_ts} used in $\mat{S}$ make it efficient to multiply with a chain of Kronecker products. 
Define $\mat{S}^{(n)}:= \mat{\Omega}^{(n)}\mat{D}^{(n)} \in \R^{m \times s_n}$, where $\mat{\Omega}^{(n)} \in \R^{m \times s_n}$ is defined based on $H_n$ and $\mat{D}^{(n)} \in \R^{s_n \times s_n}$ defined based on $S_n$, and
let $\mat{P} = \mat{A}^{(1)}\otimes \mat{A}^{(2)}\otimes \cdots \otimes \mat{A}^{(N)}$ with $\mat{A}^{(n)}\in\R^{s_n \times R_n}$ for $n\in[N]$, 
\begin{equation}
    \mat{SP}  = \text{FFT}^{-1}\left(
    \left(
    \bigodot_{n=1}^N \left(
    \text{FFT}\left(\mat{S}^{(n)}\mat{A}^{(n)}\right)
    \right)^T
    \right)^T
    \right).
\end{equation}
Calculating each $\text{FFT}\left(\mat{S}^{(n)}\mat{A}^{(n)}\right)$ costs $\bigO{s_nR_n + m\log m R_n}$, and performing the Kronecker product as well as the outer FFT costs $\bigO{m\log m\prod_{n=1}^NR_n}$. When each $s_n=s$ and $R_n=R$, the overall cost is $\bigO{NsR + m\log mR^N}$.

\subsection{Leverage Score Sampling}

Leverage score sampling is a useful tool to pick important rows to form the sampled/sketched linear least squares problem. 
Intuitively, let $\mat{Q}_P$ be an orthogonal basis for the column space of $\mat{P}$. Then large-norm rows of $\mat{Q}_P$ suggest large contribution to $\mat{Q}_P^T\mat{B}$, which is part of the linear least squares right-hand-side we can solve for.

\begin{definition}[Leverage Scores \cite{drineas2012fast,mahoney2011randomized}]\label{def:leverages_scores}
  Let $\mat{P} \in \R^{s \times R}$ with $s > R$, and
  let $\mat{Q} \in \R^{s \times R}$ be any orthogonal basis for the column space of $\mat{P}$.
  The \emph{leverage scores} of the rows of $\mat{P}$ are given by
  \begin{displaymath}
    \ell_i(\mat{P}) := (\mat{QQ}^T)(i,i) = \|\mat{Q}(i,:)\|_2^2 \qtext{for all} i \in [s].
  \end{displaymath}
\end{definition}

\begin{definition}[Importance Sampling based on Leverage Scores]\label{def:leverage_sampling} Let $\mat{P}\in\R^{s\times R}$ be a full-rank matrix and $s>R$.
The leverage score sampling matrix of $\mat{P}$ is defined as $\mat{S} = \mat{D\Omega}$, where $\mat{\Omega}\in\R^{m\times s}$, $m<s$ is the sampling matrix, and $\mat{D}\in\R^{m \times m}$ is the rescaling matrix.
For each row $j\in[m]$ of $\mat{\Omega}$,  one column index $i \in [s]$ is picked independently with replacement with probability $p_i = \ell_i(\mat{P}) / R$, and we set $\mat{\Omega}(j,i) = 1, \mat{D}(j,j) = \frac{1}{\sqrt{m p_i}}$. Other elements of $\mat{\Omega}, \mat{D}$ are 0.
\end{definition}

To calculate the leverage scores of $\mat{P}$, one can get the matrix $\mat{Q}$ via QR decomposition, and the scores can be retrieved via calculating the norm of each row of $\mat{Q}$.
However, performing QR decomposition of $\mat{P}$ is almost as costly as solving the linear least squares problem. The lemma below shows that leverage scores of $\mat{P}$ can be efficiently calculated
from smaller QR decompositions of the Kronecker product factors composing $\mat{P}$.

\begin{lemma}[Leverage Scores for Kronecker product~\cite{cheng2016spals}] \label{lem:lev_bound}
  Let $\mat{P} = \mat{A}^{(1)} \otimes \cdots \otimes \mat{A}^{(N)} \in \R^{s^N \times R^N}$, where $\mat{A}^{(i)}\in \R^{s \times R}$ and $s>R$.
    Leverage scores of $\mat{P}$ satisfy
    \begin{equation}\label{score_bound}
        \ell_i(\mat{P}) = \prod_{k=1}^N \ell_{i_k}(\mat{A}^{(k)}), \quad
        \text{where } i = 1+\sum_{k=1}^N (i_k-1) s^{k-1}.
    \end{equation}
\end{lemma}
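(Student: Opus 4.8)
The plan is to reduce the leverage scores of $\mat{P}$ to those of its Kronecker factors by exhibiting a convenient orthonormal basis for $\Range(\mat{P})$ assembled out of those factors. First I would invoke the standard fact that the quantities in \cref{def:leverages_scores} do not depend on the choice of orthonormal basis: if $\mat{Q},\mat{Q}'\in\R^{s\times R}$ are two orthonormal bases for $\Range(\mat{P})$, then $\mat{Q}\mat{Q}^T=\mat{Q}'\mat{Q}'^T$ is the orthogonal projector onto $\Range(\mat{P})$, so $\ell_i(\mat{P})$ is well defined and we are free to use whichever basis is most convenient.

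Next I would take a (thin) QR factorization $\mat{A}^{(k)}=\mat{Q}^{(k)}\mat{R}^{(k)}$ for each $k\in[N]$, with $\mat{Q}^{(k)}\in\R^{s\times R}$ having orthonormal columns and $\mat{R}^{(k)}\in\R^{R\times R}$ invertible (the factor matrices are full column rank and $s>R$; the rank-deficient case is handled identically using an economy QR of $\Range(\mat{A}^{(k)})$). By the mixed-product property of the Kronecker product,
\begin{equation}
  \mat{P} \;=\; \bigotimes_{k=1}^N \mat{A}^{(k)} \;=\; \left(\bigotimes_{k=1}^N \mat{Q}^{(k)}\right)\left(\bigotimes_{k=1}^N \mat{R}^{(k)}\right).
\end{equation}
Set $\mat{Q}_P:=\bigotimes_{k=1}^N\mat{Q}^{(k)}$. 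Then $\mat{Q}_P^T\mat{Q}_P=\bigotimes_{k=1}^N\big(\mat{Q}^{(k)T}\mat{Q}^{(k)}\big)=\bigotimes_{k=1}^N\mat{I}_R=\mat{I}_{R^N}$, so $\mat{Q}_P$ has orthonormal columns, and $\bigotimes_{k=1}^N\mat{R}^{(k)}$ is invertible as a Kronecker product of invertible matrices; hence $\mat{Q}_P$ is an orthonormal basis for $\Range(\mat{P})$.

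Finally I would compute $\ell_i(\mat{P})=\|\mat{Q}_P(i,:)\|_2^2$ directly. Writing the row index in mixed-radix form $i=1+\sum_{k=1}^N(i_k-1)s^{k-1}$ with each $i_k\in[s]$, the corresponding row of a Kronecker product of matrices is the Kronecker product of the corresponding rows, $\mat{Q}_P(i,:)=\mat{Q}^{(1)}(i_1,:)\otimes\cdots\otimes\mat{Q}^{(N)}(i_N,:)$ (up to the fixed permutation of tensor indices induced by the radix convention, which leaves the Euclidean norm unchanged). Using $\|\vcr{u}\otimes\vcr{v}\|_2=\|\vcr{u}\|_2\|\vcr{v}\|_2$ repeatedly yields $\ell_i(\mat{P})=\prod_{k=1}^N\|\mat{Q}^{(k)}(i_k,:)\|_2^2=\prod_{k=1}^N\ell_{i_k}(\mat{A}^{(k)})$, which is \eqref{score_bound}. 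The linear-algebra ingredients (mixed-product property, orthonormality of $\mat{Q}_P$, multiplicativity of the $2$-norm under $\otimes$, basis-independence of leverage scores) are all routine; the only genuinely delicate point is the last-step bookkeeping that matches the scalar index $i$ to the multi-index $(i_1,\dots,i_N)$ consistently with the Kronecker ordering convention, so that is where I would be most careful.
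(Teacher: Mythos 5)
Your proposal is correct and follows essentially the same route as the paper's proof: thin QR of each Kronecker factor, the mixed-product property to identify $\bigotimes_k \mat{Q}^{(k)}$ as an orthonormal basis for $\Range(\mat{P})$, and multiplicativity of the $2$-norm on rows of a Kronecker product. The only differences are cosmetic — the paper writes out only the $N=2$ case and leaves the general case to induction, while you treat general $N$ directly and explicitly justify basis-independence and the invertibility of the triangular factors.
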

\begin{proof}
To show \eqref{score_bound}, we only need to show the case when $N=2$, since it can then be easily generalized to arbitrary $N$.
Consider the reduced QR decomposition of $\mat{A}^{(1)} \otimes \mat{A}^{(2)}$,
\begin{equation*}
\mat{A}^{(1)} \otimes \mat{A}^{(2)} = \mat{Q}^{(1)}\mat{R}^{(1)} \otimes \mat{Q}^{(2)}\mat{R}^{(2)} = (\mat{Q}^{(1)} \otimes \mat{Q}^{(2)})(\mat{R}^{(1)} \otimes \mat{R}^{(2)}) =  \mat{Q}\mat{R}.
\end{equation*}
The reduced $\mat{Q}$ term for $\mat{A}^{(1)} \otimes \mat{A}^{(2)}$ is $ \mat{Q}^{(1)} \otimes \mat{Q}^{(2)}$. Therefore, the leverage score of the $i$th row in $\mat{Q}$, $\ell_i$, can be expressed as,
\begin{align*}
  \ell_i(\mat{P}) &= \left\|\mat{Q}(i,:)\right\|_2^2
= \left\|\mat{Q}^{(1)}(i_1,:) \otimes \mat{Q}^{(2)}(i_2,:)\right\|_2^2 \\
&= \left\|\mat{Q}^{(1)}(i_1,:)\right\|_2^2 \left\|\mat{Q}^{(2)}(i_2,:)\right\|_2^2 = \ell_{i_1}(\mat{A}^{(1)})\ell_{i_2}(\mat{A}^{(2)}).   
\end{align*} 
\end{proof}
Let $p_i = \ell_i(\mat{P})/R^N$ denote the leverage score sampling probability for $i$th index, and $p^{(k)}_{i_k} = \ell_{i_k}(\mat{A}^{(k)})/R$ for $k\in[N]$ denote the leverage score sampling probability for $i_k$th index of $\mat{A}^{(k)}$. Based on \cref{lem:lev_bound}, we have
\[
p_i = p^{(1)}_{i_1} \cdots p^{(N)}_{i_N}.
\]
Therefore, leverage score sampling can be efficiently performed
by sampling the  row of $\mat P$ associated with multi-index $(i_1,\dots,i_N)$, where $i_k$ is selected with probability $p^{(k)}_{i_k}$.
To calculate the leverage scores of each $\mat{A}^{(k)}$,
$N$ QR decompositions are needed, which in total cost
 $\bigO{NsR^2}$. 
In addition, the cost of this sampling process would be $\bigO{Nm}$ if $m$ samples are needed, making the overall cost 
$\bigO{NsR^2 + Nm}$.
To calculate $\mat{SP}$, for each sampled multi-index $(i_1,\dots,i_N)$, we need to perform the Kronecker product,
\[
\mat{A}^{(1)}(i_1,:) \otimes \cdots \otimes \mat{A}^{(N)}(i_N,:),
\]
which costs $\bigO{R^N}$.
Therefore, including the cost of QR decompositions, overall cost is
$\bigO{NsR^2 + mR^N}$.

Rather than performing importance random sampling based on leverage scores, another way introduced in \cite{jolliffe1972discarding} to construct the sketching matrix is to deterministically sample rows having the largest leverage scores. This idea is also used in \cite{larsen2020practical} for randomized CP decomposition. Papailiopoulos et al.~\cite{papailiopoulos2014provable} show that if the leverage scores follow a moderately steep power-law decay, then deterministic sampling can be provably as efficient and even better than random sampling. We compare both leverage score sampling techniques in \cref{sec:exp}. For the sampling complexity analysis in \cref{sec:sketch-rcls} and \cref{sec:cost}, we only consider the random sampling technique.

\section{Sketched Rank-constrained Linear Least Squares for Tucker-ALS}
\label{sec:sketch-rcls}

Each subproblem of Tucker HOOI solves a linear least squares problem with the following properties,
\begin{enumerate}[itemsep=0pt,leftmargin=*]
    \item the left-hand-side matrix is a chain of Kronecker products of factor matrices,
    \item the left-hand-side matrix has orthonormal columns, since each factor matrix has orthonormal columns,
    \item the rank of the output solution is constrained to be less than $R$, as is shown in \eqref{eq:tucker_ls}.
\end{enumerate}
To the best of our knowledge, the relative error analysis of sketching techniques for this problem have not been discussed in the literature. 
In the following two theorems, we will show the sketch sizes of TensorSketch and leverage score sampling that are sufficient for 
the relative residual norm error of the problems to be bounded by $\bigO{\epsilon}$ with at least $1-\delta$ probability. The detailed proofs are presented in \cref{appendix:detailed_proofs}.

\begin{theorem}[TensorSketch for Rank-constrained Linear Least Squares]
\label{thm:tensorsketch-rcls}
Consider matrices $\mat{P} = \mat{A}^{(1)}\otimes \mat{A}^{(2)}\otimes \cdots \otimes \mat{A}^{(N-1)}$, where each $\mat{A}^{(i)}\in\R^{s\times R}$ has orthonormal columns, $s>R$, and $\mat{B}\in \mathbb{R}^{s^{N-1}\times n}$.
Let $\mat{S}\in\R^{m \times s^{N-1}}$ be an order $N-1$ TensorSketch matrix.
Let $\mat{\widetilde{X}}_{\text{r}}$ be the best rank-$R$ approximation of the solution of the problem
$\min_{\mat{X}} \fnrm{\mat{SPX}-\mat{SB}}$, and let $\mat{X}_{\text{r}} = \arg\min_{\mat{X},\text{rank}(\mat{X})=R} \left\|\mat{PX}-\mat{B}\right\|_F$. 
With 
\begin{equation}\label{eq:samplesize_ts}
  m = \bigO{(R^{(N-1)} \cdot 3^{N-1})/\delta \cdot (R^{(N-1)}  + 1/\epsilon^2)},  
\end{equation}
the approximation,
	\begin{equation}
\left\|\mat{A}\mat{\widetilde{X}}_{\text{r}}- \mat{B}\right\|_F^2	 \leq \left(1+\bigO{\epsilon}\right) \Big\|\mat{A}\mat{X}_{\text{r}}- \mat{B}\Big\|_F^2,
	\end{equation}
holds with probability at least $1-\delta$.
\end{theorem}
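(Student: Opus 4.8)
The plan is to split the statement into three pieces: a coordinate reduction exploiting the orthonormality of $\mat{P}=\mat{A}^{(1)}\otimes\cdots\otimes\mat{A}^{(N-1)}$, two standard TensorSketch guarantees (an oblivious subspace embedding and an approximate-matrix-multiplication bound), and a short perturbation argument handling the rank-$R$ truncation. Write $\lfloor\mat{Z}\rfloor_R$ for the best rank-$R$ approximation of $\mat{Z}$. First I would set $\mat{M}=\mat{P}^T\mat{B}$ and decompose $\mat{B}=\mat{P}\mat{M}+\mat{B}_\perp$ with $\mat{B}_\perp=(\mat{I}-\mat{P}\mat{P}^T)\mat{B}$, so that $\mat{P}^T\mat{B}_\perp=\mat{0}$. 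Since $\mat{P}$ has orthonormal columns, orthogonality of the two summands gives $\fnrm{\mat{P}\mat{X}-\mat{B}}^2=\fnrm{\mat{X}-\mat{M}}^2+\fnrm{\mat{B}_\perp}^2$ for every $\mat{X}$; by Eckart--Young the exact rank-constrained optimum is $\mat{X}_{\text{r}}=\lfloor\mat{M}\rfloor_R$ and $\fnrm{\mat{P}\mat{X}_{\text{r}}-\mat{B}}^2=\fnrm{\mat{M}-\lfloor\mat{M}\rfloor_R}^2+\fnrm{\mat{B}_\perp}^2$. Thus the claim reduces to proving $\fnrm{\mat{\widetilde{X}}_{\text{r}}-\mat{M}}^2\le\fnrm{\mat{M}-\lfloor\mat{M}\rfloor_R}^2+\bigO{\epsilon}\big(\fnrm{\mat{M}-\lfloor\mat{M}\rfloor_R}^2+\fnrm{\mat{B}_\perp}^2\big)$, where $\mat{\widetilde{X}}_{\text{r}}=\lfloor(\mat{SP})^\dagger\mat{SB}\rfloor_R$.

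Next I would introduce two high-probability events for $\mat{S}$. By the standard subspace-embedding property of an order-$(N-1)$ TensorSketch applied to the $R^{N-1}$-dimensional column space of $\mat{P}$, taking $m=\Omega(3^{N-1}R^{2(N-1)}/\delta)$ gives, with probability $\ge 1-\delta/2$, $\tnrm{(\mat{SP})^T\mat{SP}-\mat{I}}\le\tfrac12$, so $\mat{SP}$ has full column rank and $\tnrm{((\mat{SP})^T\mat{SP})^{-1}}\le2$. On this event the sketched minimizer is $\mat{X}^\star_S=(\mat{SP})^\dagger\mat{SB}$, and substituting $\mat{SB}=\mat{SP}\mat{M}+\mat{SB}_\perp$ and using $(\mat{SP})^\dagger\mat{SP}=\mat{I}$ gives $\mat{X}^\star_S=\mat{M}+\mat{E}$ with $\mat{E}=((\mat{SP})^T\mat{SP})^{-1}\mat{P}^T\mat{S}^T\mat{S}\mat{B}_\perp$, hence $\fnrm{\mat{E}}\le 2\fnrm{\mat{P}^T\mat{S}^T\mat{S}\mat{B}_\perp}$. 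For the second event I would apply the standard second-moment (approximate-matrix-multiplication) bound for TensorSketch, of the form $\E\fnrm{\mat{U}^T\mat{S}^T\mat{S}\mat{V}-\mat{U}^T\mat{V}}^2=\bigO{3^{N-1}/m}\fnrm{\mat{U}}^2\fnrm{\mat{V}}^2$, with $\mat{U}=\mat{P}$ and $\mat{V}=\mat{B}_\perp$; since $\mat{P}^T\mat{B}_\perp=\mat{0}$ and $\fnrm{\mat{P}}^2=R^{N-1}$ this reads $\E\fnrm{\mat{P}^T\mat{S}^T\mat{S}\mat{B}_\perp}^2=\bigO{3^{N-1}R^{N-1}/m}\fnrm{\mat{B}_\perp}^2$, so taking $m=\Omega(3^{N-1}R^{N-1}/(\epsilon^2\delta))$ and applying Markov gives $\fnrm{\mat{E}}\le\bigO{\epsilon}\fnrm{\mat{B}_\perp}$ with probability $\ge1-\delta/2$. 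Taking $m$ to be the maximum of the two requirements reproduces the bound \eqref{eq:samplesize_ts}, and a union bound secures both events with probability $\ge 1-\delta$.

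Finally I would close with the truncation perturbation. Because $\mat{\widetilde{X}}_{\text{r}}=\lfloor\mat{M}+\mat{E}\rfloor_R$ is the best rank-$R$ approximant of $\mat{M}+\mat{E}$, testing it against the feasible competitor $\lfloor\mat{M}\rfloor_R$ yields $\fnrm{\lfloor\mat{M}+\mat{E}\rfloor_R-(\mat{M}+\mat{E})}\le\fnrm{\lfloor\mat{M}\rfloor_R-\mat{M}}+\fnrm{\mat{E}}$, and one more triangle inequality gives $\fnrm{\mat{\widetilde{X}}_{\text{r}}-\mat{M}}\le\fnrm{\lfloor\mat{M}\rfloor_R-\mat{M}}+2\fnrm{\mat{E}}$; squaring, bounding the cross term by AM--GM, and plugging in $\fnrm{\mat{E}}\le\bigO{\epsilon}\fnrm{\mat{B}_\perp}$ delivers exactly the reduced inequality from the first paragraph. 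The hard part — and the reason off-the-shelf bounds for general constrained sketching are too loose here — is getting the division of labor between the two events right: the subspace embedding is needed only at constant distortion, so it contributes the $R^{N-1}\cdot R^{N-1}$ summand rather than a $1/\epsilon^2$ one, while all of the $1/\epsilon$-type accuracy must be extracted from the matrix-multiplication estimate on $\mat{P}^T\mat{S}^T\mat{S}\mat{B}_\perp$; and one must notice that the rank-$R$ truncation is essentially free, because $\mat{E}$ need not be small relative to $\mat{M}$ but only relative to the unavoidable residual $\fnrm{\mat{B}_\perp}$, which is itself a lower bound on $\fnrm{\mat{P}\mat{X}_{\text{r}}-\mat{B}}$.
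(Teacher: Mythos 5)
Your proposal is correct, and it reaches the stated sketch size by the same division of labor the paper uses: a constant-distortion subspace embedding of the column space of $\mat{P}$ (contributing the $R^{2(N-1)}$ summand) plus an $\epsilon$-accurate approximate-matrix-multiplication bound on $\mat{P}^T\mat{S}^T\mat{S}\mat{B}_\perp$ (contributing the $1/\epsilon^2$ summand); these are exactly the two conditions the paper packages into its notion of a $(1/2,\delta,\epsilon)$-accurate sketching matrix, instantiated for TensorSketch via the same AMM lemma of Avron et al. Where you genuinely diverge is the treatment of the rank truncation. The paper writes $\mat{\widetilde{X}}_{\text{opt}}$ as a perturbation of $\mat{X}_{\text{opt}}$, invokes Mirsky's inequality to control the shift of the trailing singular values, and then expands the residual with an explicit inner-product cross term that it bounds separately. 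You instead work entirely in the coordinates $\mat{M}=\mat{P}^T\mat{B}$ and use only the optimality of $\lfloor\cdot\rfloor_R$ as a best approximant: testing $\lfloor\mat{M}+\mat{E}\rfloor_R$ against the feasible competitor $\lfloor\mat{M}\rfloor_R$ gives $\fnrm{\mat{\widetilde{X}}_{\text{r}}-\mat{M}}\le\fnrm{\lfloor\mat{M}\rfloor_R-\mat{M}}+2\fnrm{\mat{E}}$, after which AM--GM on the cross term finishes the job. This is more elementary (no singular-value perturbation theory) and, as you note, makes transparent why the truncation is essentially free: $\mat{E}$ only needs to be small relative to $\fnrm{\mat{B}_\perp}$, which already lower-bounds the optimal rank-constrained residual. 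The paper's formulation buys a reusable abstract theorem that it also applies verbatim to leverage score sampling, but your perturbation argument is equally sketch-agnostic and could be modularized the same way; the two routes yield identical constants up to the $\bigO{\cdot}$ notation.
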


\begin{theorem}[Leverage Score Sampling for Rank-constrained Linear Least Squares]
\label{thm:leverage-rcls}
Given matrices $\mat{P} = \mat{A}^{(1)}\otimes \mat{A}^{(2)}\otimes \cdots \otimes \mat{A}^{(N-1)}$, where each $\mat{A}^{(i)}\in\R^{s\times R}$ has orthonormal columns, $s>R$, and $\mat{B}\in \mathbb{R}^{s^{N-1}\times n}$.
Let $\mat{S}\in\R^{m \times s^{N-1}}$ be a leverage score sampling matrix for $\mat{P}$.
Let $\mat{\widetilde{X}}_{\text{r}}$ be the best rank-$R$ approximation of the solution of the problem
$\min_{\mat{X}} \fnrm{\mat{SPX}-\mat{SB}}$, and let $\mat{X}_{\text{r}} = \arg\min_{\mat{X},\text{rank}(\mat{X})=R} \left\|\mat{PX}-\mat{B}\right\|_F$. 
With $m = \bigO{R^{(N-1)}/(\epsilon^2\delta) }$,
the approximation,
	\begin{equation}
\left\|\mat{A}\mat{\widetilde{X}}_{\text{r}}- \mat{B}\right\|_F^2	 \leq \left(1+\bigO{\epsilon}\right) \Big\|\mat{A}\mat{X}_{\text{r}}- \mat{B}\Big\|_F^2,
	\end{equation}
holds with probability at least $1-\delta$.
\end{theorem}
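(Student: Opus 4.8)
The plan is to exploit the orthonormality of $\mat{P}=\mat{A}^{(1)}\otimes\cdots\otimes\mat{A}^{(N-1)}$: a Kronecker product of matrices with orthonormal columns satisfies $\mat{P}^T\mat{P}=\mat{I}$, which lets me (a) write the rank-constrained optimum explicitly and (b) replace residual error by forward error on the coefficient matrix. Set $\mat{Z}=\mat{P}^T\mat{B}$ and $\mat{B}^\perp=(\mat{I}-\mat{PP}^T)\mat{B}$, so $\mat{P}^T\mat{B}^\perp=0$; the Pythagorean identity $\|\mat{PX}-\mat{B}\|_F^2=\|\mat{X}-\mat{Z}\|_F^2+\|\mat{B}^\perp\|_F^2$ holds for every $\mat{X}$. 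By Eckart--Young this gives $\mat{X}_{\text{r}}=[\mat{Z}]_R$ (best rank-$R$ approximation of $\mat{Z}$) and optimal value $\mathrm{OPT}^2:=\|[\mat{Z}]_R-\mat{Z}\|_F^2+\|\mat{B}^\perp\|_F^2=\|\mat{P}\mat{X}_{\text{r}}-\mat{B}\|_F^2$; the same identity applied to $\mat{\widetilde{X}}_{\text{r}}$ gives $\|\mat{P}\mat{\widetilde{X}}_{\text{r}}-\mat{B}\|_F^2=\|\mat{\widetilde{X}}_{\text{r}}-\mat{Z}\|_F^2+\|\mat{B}^\perp\|_F^2$. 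So it suffices to show $\|\mat{\widetilde{X}}_{\text{r}}-\mat{Z}\|_F^2\le\|[\mat{Z}]_R-\mat{Z}\|_F^2+\bigO{\epsilon}\,\mathrm{OPT}^2$.

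Next I would control the unconstrained sketched solution $\mat{\widetilde{X}}:=\arg\min_{\mat{X}}\fnrm{\mat{SPX}-\mat{SB}}$, of which $\mat{\widetilde{X}}_{\text{r}}$ is the rank-$R$ truncation. Two standard properties of leverage-score sampling applied to $\mat{P}$ (see \cite{drineas2012fast}) do the work: (i) a constant-distortion subspace embedding $\|(\mat{SP})^T\mat{SP}-\mat{I}\|_2\le\tfrac12$, so $\mat{SP}$ has full column rank and $\|((\mat{SP})^T\mat{SP})^{-1}\|_2\le2$; and (ii) an approximate-matrix-multiplication bound which, since $\mat{P}^T\mat{B}^\perp=0$, reads $\|(\mat{SP})^T\mat{SB}^\perp\|_F\le\epsilon\|\mat{B}^\perp\|_F$. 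Property (ii) is where the sample size enters: with probabilities $p_i=\ell_i(\mat{P})/R^{N-1}$ and $\|\mat{P}(i,:)\|_2^2=\ell_i(\mat{P})$ (valid because $\mat{P}$ is its own orthonormal basis), the second moment is $\E\|(\mat{SP})^T\mat{SB}^\perp\|_F^2=\tfrac{R^{N-1}}{m}\|\mat{B}^\perp\|_F^2$, so Markov's inequality gives (ii) with probability $\ge1-\delta/2$ as soon as $m=\bigO{R^{N-1}/(\epsilon^2\delta)}$; (i) holds at $m$ of the same order up to a logarithmic factor from matrix concentration. Substituting $\mat{B}=\mat{PZ}+\mat{B}^\perp$ into $\mat{\widetilde{X}}=((\mat{SP})^T\mat{SP})^{-1}(\mat{SP})^T\mat{SB}$ and using $\mat{P}^T\mat{B}^\perp=0$ yields the identity $\mat{\widetilde{X}}=\mat{Z}+\mat{E}$ with $\mat{E}=((\mat{SP})^T\mat{SP})^{-1}(\mat{SP})^T\mat{SB}^\perp$, and then (i)--(ii) give $\|\mat{E}\|_F\le2\epsilon\|\mat{B}^\perp\|_F\le2\epsilon\,\mathrm{OPT}$.

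To finish, I would pass to the rank-$R$ truncation by an elementary perturbation bound: testing the best rank-$R$ approximation of $\mat{Z}+\mat{E}$ against the feasible competitor $[\mat{Z}]_R$ gives $\|[\mat{Z}+\mat{E}]_R-(\mat{Z}+\mat{E})\|_F\le\|[\mat{Z}]_R-\mat{Z}\|_F+\|\mat{E}\|_F$, hence $\|\mat{\widetilde{X}}_{\text{r}}-\mat{Z}\|_F\le\|[\mat{Z}]_R-\mat{Z}\|_F+2\|\mat{E}\|_F$; squaring, using $\|[\mat{Z}]_R-\mat{Z}\|_F\le\mathrm{OPT}$, $\|\mat{B}^\perp\|_F\le\mathrm{OPT}$, and adding $\|\mat{B}^\perp\|_F^2$ back, one obtains $\|\mat{P}\mat{\widetilde{X}}_{\text{r}}-\mat{B}\|_F^2\le(1+\bigO{\epsilon})\,\mathrm{OPT}^2$, with a union bound over (i)--(ii) supplying the $1-\delta$ probability. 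I expect the main obstacle to be making (i) and (ii) hold \emph{simultaneously} at the advertised $m=\bigO{R^{N-1}/(\epsilon^2\delta)}$ --- the $1/(\epsilon^2\delta)$ is forced by the second-moment/Markov step in (ii), while the full-column-rank embedding in (i) needs either a matrix Chernoff bound (contributing a logarithmic factor, harmless for small $\epsilon$) or a separate variance estimate --- together with the structural point that it is precisely the orthonormality of $\mat{P}$ that converts ``small residual'' into ``small coefficient error,'' making the rank-$R$ truncation harmless (otherwise the truncation would have to be taken in the $\mat{SP}$-weighted norm and the reduction breaks). \cref{thm:tensorsketch-rcls} follows the identical skeleton with TensorSketch's subspace-embedding and approximate-matrix-multiplication guarantees substituted for (i)--(ii); their weaker higher-moment behavior produces the extra $3^{N-1}$ factor and the additive $R^{N-1}$ term in \eqref{eq:samplesize_ts}.
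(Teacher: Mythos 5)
Your proposal is correct, and it reaches the stated sample size by the same two probabilistic ingredients the paper uses --- a constant-distortion subspace embedding for $\mat{P}$ and an approximate-matrix-multiplication bound at accuracy $\epsilon$, which the paper packages as the $(1/2,\delta,\epsilon)$-accurate sketching property (\cref{def:accurate_skeching}) and verifies for leverage score sampling in \cref{lem:structure_leverage}; your second-moment/Markov computation is exactly the content of \cref{lam:matmul_leverage}, and you flag the same logarithmic factor in the embedding step that the paper silently absorbs. Where you genuinely depart from the paper is in the rank-truncation step. The paper's \cref{thm:structure-rcls} expands $\|\mat{P}\mat{\widetilde{X}}_{\text{r}}-\mat{B}\|_F^2$ into a sum of four terms and controls the low-rank-truncation term by applying Mirsky's singular-value perturbation inequality (\cref{thm:perturb-value-multiple}) to compare the tail singular values of $\mat{\widetilde{X}}_{\text{opt}}$ with those of $\mat{X}_{\text{opt}}$, then separately bounds a cross term via Cauchy--Schwarz and AM--GM. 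You instead write $\mat{\widetilde{X}}_{\text{opt}}=\mat{Z}+\mat{E}$ with $\|\mat{E}\|_F\le 2\epsilon\|\mat{B}^{\perp}\|_F$ and test the optimal rank-$R$ truncation of $\mat{Z}+\mat{E}$ against the feasible competitor $\mat{X}_{\text{r}}=[\mat{Z}]_R$, so that two triangle inequalities give $\|\mat{\widetilde{X}}_{\text{r}}-\mat{Z}\|_F\le\|\mat{X}_{\text{r}}-\mat{Z}\|_F+2\|\mat{E}\|_F$, and the $(1+\bigO{\epsilon})$ bound follows by squaring and adding back $\|\mat{B}^{\perp}\|_F^2$. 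Both routes hinge on the structural point you identify --- orthonormality of $\mat{P}$ converts residual error into forward error on the coefficient matrix, and that forward error must be $\bigO{\epsilon}\|\mat{B}^{\perp}\|_F$ rather than $\bigO{\sqrt{\epsilon}}\|\mat{B}^{\perp}\|_F$, which is where the extra $1/\epsilon$ over the unconstrained bound enters in both arguments --- but your competitor argument is shorter and dispenses with singular-value perturbation theory altogether.
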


Therefore, for leverage score sampling, $\bigO{R^{(N-1)}/(\epsilon^2\delta) }$ number of samples are sufficient to get $(1+\bigO{\epsilon})$-accurate residual with probability at least $1-\delta$. The sketch size upper bound for TensorSketch is higher than that for leverage score sampling, suggesting that leverage score sampling is better. As can be seen in \eqref{eq:samplesize_ts}, when $R^{N-1}\leq 1/\epsilon^2$, the sketch size bound for TensorSketch is $\bigO{3^{N-1}}$ times that for leverage score sampling. When $R^{N-1} > 1/\epsilon^2$, the ratio is even higher. The accuracy comparison of the two methods is discussed further in \cref{sec:exp}. 

While TensorSketch has a worse upper bound compared to leverage score sampling,
it is more flexible since the sketching matrix is independent of the left-hand-side matrix. 
One can derive a sketch size bound that is sufficient to get $(1+\bigO{\epsilon})$-accurate residual norm for
linear least squares with general (not necessarily rank-based) constraints (detailed in \cref{appendix:general_ts}).
This bound is looser than \eqref{eq:samplesize_ts}, while applicable for general constraints. For leverage score sampling, 
we do not provide a sample size bound for general constrained linear least squares.

\begin{table}[!ht]
  \begin{center}
    \renewcommand{\arraystretch}{1.3}
    {\small
    \begin{tabular}{l|p{0.34\textwidth}|p{0.34\textwidth}}
      Sketching method  & Rank-constrained least squares & Unconstrained least squares \\ \hline
      Leverage score sampling  & $\bigO{R^{(N-1)}/(\epsilon^2\delta) }$  (\cref{thm:leverage-rcls})
      & 
      $\bigO{R^{(N-1)}/(\epsilon\delta) } \text{  or}$ 
      $\bigO{R^{(N-1)}\log(1/\delta)/\epsilon^2 }$ \cite{larsen2020practical}  \\ \hline
      TensorSketch & $\bigO{(3R)^{(N-1)}/\delta \cdot (R^{(N-1)}  + 1/\epsilon^2)}$ (\cref{thm:tensorsketch-rcls})
      & $\bigO{(3R)^{(N-1)}/\delta \cdot (R^{(N-1)}  + 1/\epsilon)}$ 
      \\ \hline
    \end{tabular}
    }
    \renewcommand{\arraystretch}{1}
  \end{center}
\caption{Comparison of sketch size upper bounds for rank-constrained linear least squares and unconstrained linear least squares. The upper bounds are sufficient for 
the relative residual norm error to be bounded by $\bigO{\epsilon}$ with at least $1-\delta$ probability.
}
\label{tab:sketch-compare}
\end{table}

We also compare the sketch size upper bounds for rank-constrained linear least squares and unconstrained linear least squares in \cref{tab:sketch-compare}. For both leverage score sampling and TensorSketch, the upper bounds for rank-constrained problems are at most $\bigO{1/\epsilon}$ times the upper bounds for unconstrained linear least squares problem. 
The error of sketched rank-constrained solution consists of two parts, the error of the sketched unconstrained linear least squares solution, and the error from low-rank approximation of the unconstrained solution. To make sure the second error term has a relative error bound of $\bigO{\epsilon}$, we restrict the first error term to be relatively bounded by $\bigO{\epsilon^2}$, incurring a larger sketch size upper bound.

\section{Initialization of Factor Matrices via Randomized Range Finder}
\label{sec:init}

\begin{algorithm}
    \caption{\textbf{Init-RRF}: Initialization based on randomized range finder}
\label{alg:rrf}
\begin{algorithmic}[1]
\STATE{\textbf{Input: }Matrix $\mat{M}\in\R^{n\times m}$, rank $R$, tolerance $\epsilon$
}
\STATE{Initialize $\mat{S} \in \R^{m \times k}$, with $k = \bigO{R/\epsilon}$, as a composite sketching matrix
(see \cref{def:composite})}
\STATE{$\mat{B} \leftarrow\mat{MS}$}
\STATE{$\mat{U},\mat{\Sigma},\mat{V} \leftarrow \texttt{SVD}(\mat{B})$}
\RETURN $\mat{U}(:,:R)$
\end{algorithmic}
\end{algorithm}

The effectiveness of sketching with leverage score sampling for Tucker-ALS is dependent on finding a good initialization of the factor matrices. 
This sensitivity arises because in each subproblem \eqref{eq:tucker_ls}, only part of the input tensor being sampled is taken into consideration, and some non-zero input tensor elements are unsampled
in all ALS linear least squares subproblems
if the initialization of the factor matrices are far from the accurate solutions. 
Initialization is not 
a big problem for CountSketch/TensorSketch, since all the non-zero elements in the input tensor appear in the sketched right-hand-side.

An unsatisfactory initialization can severely affect the accuracy of leverage score sampling if 
elements of the tensor have large variability in magnitudes, 
a property known as high coherence. The coherence~\cite{candes2009exact} of a matrix $\mat{U}\in\R^{n\times r}$ with $n>r$ is defined as $\mu(\mat{U})= \frac{n}{r}\max_{i<n}\|\mat{Q}_{U}^T\vcr{e}_i\|$, where $\mat{Q}_{U}$ is an orthogonal basis for the column space of $\mat{U}$ and $\vcr{e}_i$ for $i\in[n]$ is a standard basis. Large coherence means that the orthogonal basis $\mat{Q}_{U}$ has large row norm variability. A tensor $\tsr{T}$ has high coherence if all of its matricizations $\mat{T}_{(i)}^T$ for $i\in[N]$ have high coherence.

We use an example to illustrate the problem of bad initializations
for leverage score sampling on tensors with high coherence. Suppose we seek a
rank $R$ Tucker decomposition of $\tsr{T}\in \R^{s\times s\times s}$ expressed as
\[
\tsr{T} = \tsr{C} \times_1 \mat{A} \times_2 \mat{A} \times_3 \mat{A} + \tsr{D},
\]
where $\tsr{C}\in \R^{R\times R\times R}$ is a tensor with elements drawn from a normal distribution, $\tsr{D}\in \R^{s\times s\times s}$ is a very sparse tensor (has high coherence), and $\mat{A}\in\R^{s\times R}$ is an orthogonal basis for the column space of a matrix with elements drawn from a normal distribution. 
Let all the factor matrices be initialized by $\mat{A}$. Consider $R \ll s$ and let the leverage score  sample size $m=R$.
Since $\tsr{D}$ is very sparse, there is a
high probability that most of the non-zero elements in $\tsr{D}$ are not sampled in all the sketched subproblems, resulting in a decomposition error proportional to $\|\tsr{D}\|_F$.

This problem can be fixed by initializing factor matrices using the randomized range finder (RRF) algorithm.
For each matricization $\mat{T}^{(i)}\in \R^{s\times s^{N-1}}$, where $i\in[N]$, we first find a good low-rank subspace $\mat{U}\in \R^{s \times m}$, where $m=\bigO{R/\epsilon}$, such that it is $\epsilon$-close to the rank-$R$ subspace defined by its leading left singular vectors,
\begin{equation}\label{eq:err_rrf}
      \left\|\mat{T}^{(i)} - \mat{U}\mat{U}^T\mat{T}^{(i)}\right\|_F^2  \leq (1 + \epsilon)\min_{\text{rank}(\mat{X})\leq R}\left\|\mat{T}^{(i)} - \mat{X}\right\|^2_F,
\end{equation}
and then initialize $\mat{A}^{(i)}$ based on the first $R$ columns of $\mat{U}$.
To calculate $\mat{U}$, we use a composite sketching matrix $\mat{S}$ defined in \cref{def:composite}, such that $\mat{U}$ is calculated via performing SVD on the sketched matrix $\mat{\mat{T}}^{(i)}\mat{S}$. Based on \cref{thm:good_lowrank}, \eqref{eq:err_rrf} holds with high probability.

\begin{definition}[Composite sketching matrix~\cite{Boutsidis2015CommunicationoptimalDP,wang2015practical}]\label{def:composite}
Let $k_1 = \bigO{R/\epsilon}$ and $k_2 = \bigO{R^2 + R/\epsilon}$. 
The composite sketching matrix $\mat{S}\in\R^{s \times k_1}$ is defined as $\mat{S}=\mat{T}\mat{G}$, where $\mat{T}\in \R^{s\times k_2}$ is a CountSketch matrix (defined in \cref{def:countsketch}), and $\mat{G}\in\R^{k_2\times k_1}$ contains elements selected randomly from a normal distribution with variance $1/k_1$.
\end{definition}

\begin{theorem}[Good low-rank subspace~\cite{Boutsidis2015CommunicationoptimalDP}]\label{thm:good_lowrank}
Let $\mat{T}$ be an $m\times n$ matrix, $R < \rank(\mat{T})$ be a rank parameter, and $\epsilon > 0$ be an accuracy parameter. Let $\mat{S} \in \R^{n\times k}$ be a 
composite sketching matrix defined as in \cref{def:composite}. 
Let $\mat{B} = \mat{TS}$
and let $\mat{Q}\in\R^{m\times k}$ be any orthogonal basis for the column space of $\mat{B}$. Then, with probability at least 0.99,
\begin{equation}
    \left\|\mat{T} - \mat{QQ}^T\mat{T}\right\|_F^2  \leq (1 + \epsilon)\left\|\mat{T} - \widetilde{\mat{T}}\right\|^2_F,
\end{equation}
where $\widetilde{\mat{T}}$ is the best rank-$R$ approximation of $\mat{T}$.
\end{theorem}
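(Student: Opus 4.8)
The plan is to reduce the claimed bound to an existence statement about the column space of $\mat{B}=\mat{TS}$, and then certify it using two structural properties that the composite matrix $\mat{S}=\mat{C}\mat{G}$ (CountSketch $\mat{C}\in\R^{n\times k_2}$ followed by Gaussian $\mat{G}\in\R^{k_2\times k_1}$, $k_1=\bigO{R/\epsilon}$, $k_2=\bigO{R^2+R/\epsilon}$) inherits from its two factors: $(\mathrm{i})$ $\mat{S}$ is an $O(1)$-distortion $\ell_2$-subspace embedding for the leading $R$-dimensional right singular subspace of $\mat{T}$, and $(\mathrm{ii})$ $\mat{S}$ satisfies an approximate-matrix-multiplication (AMM) second-moment bound with parameter $1/k_1$. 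For the reduction: since $\mat{Q}$ is an orthonormal basis for $\mathrm{range}(\mat{TS})$, the matrix $\mat{Q}\mat{Q}^T$ is the orthogonal projector onto that subspace, so $\mat{Q}\mat{Q}^T\mat{T}=\mat{TS}(\mat{TS})^\dagger\mat{T}$ is the best Frobenius approximation of $\mat{T}$ whose columns lie in $\mathrm{range}(\mat{TS})$; hence $\fnrm{\mat{T}-\mat{Q}\mat{Q}^T\mat{T}}^2=\min_{\mat{Y}}\fnrm{\mat{T}-\mat{TSY}}^2\le\fnrm{\mat{T}-\mat{TSY}}^2$ for \emph{any} choice of $\mat{Y}$, so it suffices to exhibit one $\mat{Y}$ with $\fnrm{\mat{T}-\mat{TSY}}^2\le(1+\bigO{\epsilon})\fnrm{\mat{T}-\widetilde{\mat{T}}}^2$.

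Write the SVD $\mat{T}=\mat{U}\mat{\Sigma}\mat{V}^T$ and let $\mat{V}_R$ collect the top $R$ right singular vectors, so that $\widetilde{\mat{T}}=\mat{T}\mat{V}_R\mat{V}_R^T$ and $(\mat{T}-\widetilde{\mat{T}})\mat{V}_R=\mat{0}$. I would take $\mat{Y}=(\mat{V}_R^T\mat{S})^\dagger\mat{V}_R^T$. By $(\mathrm{i})$ the $R\times k_1$ matrix $\mat{V}_R^T\mat{S}$ has full row rank, so $(\mat{V}_R^T\mat{S})(\mat{V}_R^T\mat{S})^\dagger=\mat{I}_R$ and, using $\widetilde{\mat{T}}=\widetilde{\mat{T}}\mat{V}_R\mat{V}_R^T$, one gets $\mat{TSY}=\widetilde{\mat{T}}+\mat{E}$ with $\mat{E}:=(\mat{T}-\widetilde{\mat{T}})\mat{S}(\mat{V}_R^T\mat{S})^\dagger\mat{V}_R^T$. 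Since the rows of $\mat{E}$ lie in $\mathrm{range}(\mat{V}_R)$ while $(\mat{T}-\widetilde{\mat{T}})\mat{V}_R=\mat{0}$, the matrices $\mat{T}-\widetilde{\mat{T}}$ and $\mat{E}$ are orthogonal in the Frobenius inner product, so Pythagoras gives $\fnrm{\mat{T}-\mat{TSY}}^2=\fnrm{\mat{T}-\widetilde{\mat{T}}}^2+\fnrm{\mat{E}}^2$, and it remains to bound $\fnrm{\mat{E}}^2\le\bigO{\epsilon}\fnrm{\mat{T}-\widetilde{\mat{T}}}^2$. For that I would use $(\mat{V}_R^T\mat{S})^\dagger=\mat{S}^T\mat{V}_R(\mat{V}_R^T\mat{S}\mat{S}^T\mat{V}_R)^{-1}$ to rewrite $\mat{E}=\bigl[(\mat{T}-\widetilde{\mat{T}})\mat{S}\mat{S}^T\mat{V}_R\bigr](\mat{V}_R^T\mat{S}\mat{S}^T\mat{V}_R)^{-1}\mat{V}_R^T$, so that $\fnrm{\mat{E}}\le\fnrm{(\mat{T}-\widetilde{\mat{T}})\mat{S}\mat{S}^T\mat{V}_R}\cdot\tnrm{(\mat{V}_R^T\mat{S}\mat{S}^T\mat{V}_R)^{-1}}$; the second factor is $O(1)$ by $(\mathrm{i})$, while the first equals $\fnrm{(\mat{T}-\widetilde{\mat{T}})\mat{S}\mat{S}^T\mat{V}_R-(\mat{T}-\widetilde{\mat{T}})\mat{V}_R}$, exactly the AMM error for the product of $\mat{T}-\widetilde{\mat{T}}$ and $\mat{V}_R$ under $\mat{S}$, which by $(\mathrm{ii})$ and Markov is at most $\sqrt{\bigO{\fnrm{\mat{V}_R}^2/k_1}}\,\fnrm{\mat{T}-\widetilde{\mat{T}}}=\sqrt{\bigO{R/k_1}}\,\fnrm{\mat{T}-\widetilde{\mat{T}}}$ with large constant probability. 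Since $k_1=\bigO{R/\epsilon}$ this yields $\fnrm{\mat{E}}^2\le\bigO{\epsilon}\fnrm{\mat{T}-\widetilde{\mat{T}}}^2$, and rescaling $\epsilon$ completes the deterministic part.

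It then remains to verify $(\mathrm{i})$ and $(\mathrm{ii})$ for $\mat{S}=\mat{C}\mat{G}$. A CountSketch with $k_2=\bigO{R^2}$ rows is an $O(1)$-distortion embedding for any fixed $R$-dimensional subspace with constant probability (Clarkson--Woodruff / Nelson--Nguyen), and a Gaussian with $k_1=\bigO{R}$ rows is likewise; composing them embeds the (sketch-independent) subspace $\mathrm{range}(\mat{V}_R)$ with $O(1)$ distortion, which also forces full row rank of $\mat{V}_R^T\mat{S}$ and the $O(1)$ bound on $\tnrm{(\mat{V}_R^T\mat{S}\mat{S}^T\mat{V}_R)^{-1}}$. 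For $(\mathrm{ii})$, $\E[\mat{C}\mat{C}^T]=\mat{I}$ and $\E[\mat{G}\mat{G}^T]=\mat{I}$, each factor obeys the standard AMM second-moment bound with parameter the reciprocal of its number of rows, and composing the two bounds (using $\E\fnrm{\mat{C}^T\mat{Z}}^2=\fnrm{\mat{Z}}^2$ and $k_1\le k_2$) gives $\E\fnrm{\mat{V}_R^T\mat{S}\mat{S}^T\mat{Z}-\mat{V}_R^T\mat{Z}}^2=\bigO{\fnrm{\mat{V}_R}^2\fnrm{\mat{Z}}^2/k_1}$ for any $\mat{Z}$; applying this with $\mat{Z}=(\mat{T}-\widetilde{\mat{T}})^T$ (so $\mat{V}_R^T\mat{Z}=\mat{0}$) and Markov gives the bound used above. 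A union bound over the constantly many failure events brings the overall success probability to at least $0.99$.

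The crux I expect is the estimate on $\fnrm{\mat{E}}$: the naive submultiplicative bound $\fnrm{(\mat{T}-\widetilde{\mat{T}})\mat{S}}\cdot\tnrm{(\mat{V}_R^T\mat{S})^\dagger}$ only gives $O(1)\fnrm{\mat{T}-\widetilde{\mat{T}}}$, which is useless, and the entire $\bigO{\epsilon}$ saving comes from regrouping so that the sketch multiplies $\mat{T}-\widetilde{\mat{T}}$ and $\mat{V}_R$ together and the relevant factor becomes an approximation of the zero matrix $(\mat{T}-\widetilde{\mat{T}})\mat{V}_R$. Keeping straight which of $(\mathrm{i})$ and $(\mathrm{ii})$ needs $\bigO{R^2}$ versus $\bigO{R/\epsilon}$ sketch rows, and propagating the distortion constants through the two-stage sketch, is routine bookkeeping rather than a genuine difficulty.
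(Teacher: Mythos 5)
The paper does not actually prove this statement --- it is imported verbatim from the cited reference \cite{Boutsidis2015CommunicationoptimalDP} with no proof given --- so there is nothing internal to compare against. Your argument is the standard one from that literature (reduce to exhibiting one $\mat{Y}$ via the projection optimality of $\mat{Q}\mat{Q}^T\mat{T}$, pick $\mat{Y}=(\mat{V}_R^T\mat{S})^\dagger\mat{V}_R^T$, split off the error term $\mat{E}$ by Pythagoras using $(\mat{T}-\widetilde{\mat{T}})\mat{V}_R=\mat{0}$, and control $\fnrm{\mat{E}}$ by the subspace-embedding and approximate-matrix-multiplication properties of $\mat{S}$), and every step checks out; you also correctly identify that the whole $\bigO{\epsilon}$ gain hinges on regrouping so that the AMM error approximates the zero product $(\mat{T}-\widetilde{\mat{T}})\mat{V}_R$. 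The one place you gloss is the composition of the two AMM second-moment bounds: conditioning on the CountSketch $\mat{C}$ leaves you with $\E_{\mat{C}}\bigl[\fnrm{\mat{C}^T\mat{V}_R}^2\,\fnrm{\mat{C}^T\mat{Z}}^2\bigr]$, which is an expectation of a product and not the product of expectations; this is routinely repaired either by Cauchy--Schwarz with fourth-moment bounds for CountSketch or by first conditioning on the constant-probability event that both factors are within a constant of their means, and then folding that event into the final union bound. With that repair, the proof is complete and matches the approach of the cited source.
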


The algorithm is shown in \cref{alg:rrf}. The multiplication $\mat{\mat{T}}^{(i)}\mat{S}$ costs $\bigO{\nnz(\tsr{T}) + sR^3/\epsilon}$, and the SVD step costs $\bigO{sR^2/\epsilon}$, making the cost of the initialization step $\bigO{\nnz(\tsr{T}) + sR^3/\epsilon}$. Since we need at least go over all the non-zero elements of the input tensor for a good initialization guess,
the cost is $\Omega(\nnz(\tsr{T}) + sR)$. Consequently,
\cref{alg:rrf} is computationally efficient for small $R$.

Note that since $\mat{A}^{(i)}$ is only part of $\mat{U}$,  the error $\left\|\mat{T}^{(i)} - \mat{A}^{(i)}\mat{A}^{(i)}{}^T\mat{T}^{(i)}\right\|_F^2$ is generally higher than that shown in \eqref{eq:err_rrf}, so further ALS sweeps are necessary to further decrease the residual. 
Based on the experimental results shown in \cref{sec:exp}, this initialization greatly enhances the performance of leverage score sampling for tensors with high coherence.

\section{Cost Analysis}
\label{sec:cost}
\subsection{Cost Analysis for Sketched Tucker-ALS}
\label{subsec:cost}
\begin{algorithm}[!ht]\small
\caption{\textbf{Sketch-Tucker-ALS}: Sketched ALS procedure for Tucker decomposition}\label{tucker-rals}
\begin{algorithmic}[1]
\label{alg:rand-tucker-als}
\STATE{\textbf{Input: }Input tensor $\tsr{T}\in\mathbb{R}^{s_1 \times \cdots\times s_N}$, 
Tucker ranks $\{R_1,\ldots,R_N\}$, 
maximum number of sweeps $I_{\text{max}}$, sketching tolerance $\epsilon$}
\STATE $\tsr{C}\leftarrow \tsr{O}$
\FOR{{$n\in \inti{2}{N} $}} \label{line:start-rrf}{ 
\STATE $\mat{A}^{(n)} \leftarrow \texttt{Init-RRF}(\mat{T}_{(n)}, R_n, \epsilon)$
}\ENDFOR \label{line:end-rrf}
\FOR{{$i\in \inti{1}{I_{\text{max}}} $}} {
	\FOR{{$n\in \inti{1}{N} $}} \label{line:innerfor}{
  \STATE{Build the sketching matrix $\mat{S}^{(n)}$} \label{line:build-omega}
  \STATE{$\mat{Y} \leftarrow \mat{S}^{(n)} \mat{T}_{(n)}$}\label{line:sketch-rhs}
  \STATE{$\mat{Z} \leftarrow \mat{S}^{(n)} (\mat{A}^{(1)} \otimes \cdots \otimes \mat{A}^{(n-1)} \otimes \mat{A}^{(n+1)}\otimes \cdots\otimes\mat{A}^{(N)})$}\label{line:sketch-lhs}
  \STATE $\mat{C}_{(n)}^T, \mat{A}^{(n)} \leftarrow $ \texttt{RSVD-LRLS}($\mat{Z}, \mat{Y}, R$) \label{line:rcls}
	}\ENDFOR \label{line:innerforend}
}\ENDFOR
\RETURN $\left\{\tsr{C}, \mat{A}^{(1)}, \ldots , \mat{A}^{(N)} \right\}$ 
\end{algorithmic}
\end{algorithm}

\begin{table}[!ht]
  \begin{center}
    \renewcommand{\arraystretch}{1.3}
    {\small
    \begin{tabular}{p{0.215\textwidth}|p{0.22\textwidth}|p{0.31\textwidth}|l}
      Algorithm for Tucker  & LS subproblem cost & Sketch size ($m$) & Prep cost\\ \hline
      ALS  & $\Omega(\nnz(\tsr{T})R)$ & / & / \\ \hline
      ALS+TensorSketch~\cite{malik2018low}  & $\bigOt{msR + mR^{N}}$ & 
      $\bigO{(3R)^{N-1}/\delta \cdot (R^{N-1}  + 1/\epsilon)}$ 
      & $\bigO{N\nnz(\tsr{T})}$  \\ \hline
      ALS+TTMTS \cite{malik2018low}&  $\bigOt{msR^{N-1}}$ & 
      Not shown 
      & $\bigO{N\nnz(\tsr{T})}$  \\ \hline
      \underline{ALS + TensorSketch} & $\bigO{msR + mR^{2(N-1)}}$ & $\bigO{(3R)^{N-1}/\delta \cdot (R^{N-1}  + 1/\epsilon^2)}$ (\cref{thm:tensorsketch-rcls})
      & $\bigO{N\nnz(\tsr{T})}$  \\ \hline
      \underline{ALS+leverage scores}
      & $\bigO{msR + mR^{2(N-1)}}$
      & $\bigO{R^{N-1}/(\epsilon^2\delta)}$ (\cref{thm:leverage-rcls}) & /
    \end{tabular}
    }
    \renewcommand{\arraystretch}{1}
  \end{center}
\caption{Comparison of algorithm complexity between Tucker-ALS (HOOI), ALS with the TensorSketch/leverage score sampling, and the sketched Tucker-ALS algorithms introduced in \cite{malik2018low}. The 
third column
shows the sketch size sufficient for the sketched linear least squares to be $(1+\bigO{\epsilon})$ accurate with probability at least $1-\delta$.
Underlined algorithms are our new contributions.
}
\label{tab:alscompare}
\end{table}

\begin{algorithm}\small
\caption{\textbf{RSVD-LRLS}: Low-rank approximation of least squares solution via randomized SVD}
\label{alg:RCLS}
\begin{algorithmic}[1]
\STATE{\textbf{Input: }Left-hand-side matrix $\mat{Z}\in \mathbb{R}^{m\times r}$, right-hand-side matrix $\mat{Y}\in \mathbb{R}^{m\times s}$, rank $R$}
\STATE Initialize $\mat{S}\in \R^{s\times \bigO{R}}$ as a random Gaussian sketching matrix
\STATE $\mat{B} \leftarrow (\mat{Z}^T\mat{Z})^{-1}$
\STATE $\mat{C} \leftarrow \mat{B} \mat{Z}^T \mat{Y}\mat{S}$
\STATE{$\mat{Q}, \mat{R}\leftarrow$ \texttt{qr}($\mat{C}$)}
\STATE $\mat{D} \leftarrow \mat{Q}^T \mat{B} \mat{Z}^T \mat{Y}$
\STATE $\mat{U}, \mat{\Sigma}, \mat{V} \leftarrow \texttt{svd}(\mat{D})$
\RETURN $\mat{Q}\mat{U}(:,:R)\mat{\Sigma}(:R,:R), \mat{V}(:,:R)$
\end{algorithmic}
\end{algorithm}

In this section, we provide detailed cost analysis for the sketched Tucker-ALS algorithm. The algorithm is shown in \cref{alg:rand-tucker-als}.
Note that for leverage score sampling, lines \ref{line:build-omega} and \ref{line:sketch-rhs} need to be recalculated for every sweep, since $\mat{S}^{(n)}$ is dependent on the factor matrices. 
On the 
other hand, the TensorSketch embedding is oblivious to the state of the factor matrices, so
 we can choose to use the same $\mat{S}^{(n)}$ for all the sweeps for each mode $n$ to save cost. This strategy is also used in \cite{malik2018low}.
Detailed cost analysis for each part of \cref{alg:rand-tucker-als} is listed below, where we assume $s_1 = \cdots = s_N = s$ and $R_1 = \cdots = R_N = R$.
\begin{itemize}[leftmargin=*,itemsep=0pt]
    \item Line \ref{line:start-rrf}-\ref{line:end-rrf}: the cost is $\bigO{N\nnz(\tsr{T}) + NsR^3/\epsilon}$ by the analysis
    in \cref{sec:init}.
    \item Line \ref{line:build-omega}: if using leverage score sampling, the cost is $\bigO{sR}$ per subproblem (for computing the leverage scores of the previously updated $\mat{A}^{(i)}$). If using TensorSketch, the cost is $\bigO{Ns}$, which is only incurred for the first sweep.
    \item Line \ref{line:sketch-rhs}: if using leverage score sampling, the cost is $\bigO{ms}$ per subproblem; if using TensorSketch, the cost is $\bigO{N\nnz(\tsr{T})}$, 
    and is only incurred for the first sweep.
    \item Line \ref{line:sketch-lhs}: if using leverage score sampling, the cost is $\bigO{mR^{N-1}}$ per subproblem; if using TensorSketch, the cost is $\bigO{NsR + m\log (m)R^{N-1}}$ per subproblem, as analyzed in \cref{sec:sketch}.
    \item Line \ref{line:rcls}: the cost is $\bigO{msR + mR^{2(N-1)}}$ per subproblem, under the condition that $m \geq R^{N-1}$ and using randomized SVD as detailed in \cref{alg:RCLS}. 
\end{itemize}
Therefore, the cost for each subproblem (lines~\ref{line:build-omega}-\ref{line:rcls}) is $\bigO{msR + mR^{2(N-1)}}$, for both leverage score sampling and TensorSketch. For TensorSketch, another cost of $\bigO{N\nnz(\tsr{T})}$ is incurred at the first sweep to sketch the right-hand-side matrix, which we refer to as preparation cost. Using the initialization scheme based on RRF to initialize the factor matrices would increase the cost of both sketching techniques by
$\bigO{N\nnz(\tsr{T}) + NsR^3/\epsilon}$.

We compare the cost of each linear least squares subproblem between our sketched ALS algorithms with both HOOI and the sketched ALS algorithms introduced in \cite{malik2018low} in \cref{tab:alscompare}. For the ALS + TensorSketch algorithm in \cite{malik2018low}, $N + 1$ subproblems are solved in each sweep, and in each subproblem either one factor matrix or the core tensor is updated based on the sketched \textit{unconstrained} linear least squares solutions.  
For the ALS + TTMTS algorithm, TensorSketch is simply used to accelerate the TTMc operations. 

For the solutions of sketched linear least squares problems to be unique, we need $m\geq R^{N-1}$ and hence $m = \Omega(R^{N-1})$. With this condition,
the cost of each linear least squares subproblem of our sketched ALS algorithms is less than that for ALS + TTMTS, but a bit more than the ALS + TensorSketch in \cite{malik2018low}. However, as shown in \cref{sec:exp}, our algorithms provide better accuracy as a result of updating more variables at a time.
We also show the sketch size upper bound sufficient to get a $(1+\bigO{\epsilon})$-accurate approximation in residual norm. As can be seen in the table, our sketching algorithm with leverage score sampling has the smallest sketch size, making it the best algorithm considering both the cost of each subproblem and the sketch size. In~\cite{malik2018low}, the authors give an error bound for the approximate matrix multiplication in ALS + TTMTS, but the relative error of the overall linear least squares problem is not given. 
For the ALS + TensorSketch algorithm in \cite{malik2018low}, the sketch size upper bound in \cref{tab:alscompare}  comes from the upper bound for the unconstrained linear least squares problem.

\subsection{Cost Analysis for CP Decomposition}
\label{sec:cpd}
\begin{algorithm}
    \caption{\textbf{CP-Sketch-Tucker}: CP decomposition with sketched Tucker-ALS}
\label{alg:cp-rand-tucker}
\begin{algorithmic}[1]
\STATE{\textbf{Input: }Tensor $\tsr{T}\in\mathbb{R}^{s_1\times\cdots s_N}$, rank $R$, maximum number of Tucker-ALS sweeps $I_{max}$, Tucker sketching tolerance $\epsilon$
}
\STATE{$\left\{\tsr{C}, \mat{B}^{(1)}, \ldots , \mat{B}^{(N)} \right\} \leftarrow$  \texttt{Rand-Tucker-ALS}($\tsr{T}, \{R, \ldots, R\}, I_{max}, \epsilon$)}
\STATE $\left\{\mat{A}^{(1)}, \ldots , \mat{A}^{(N)} \right\} \leftarrow \texttt{CP-ALS}(\tsr{C}, R)$
\RETURN $\{  \mat{B}^{(1)}\mat{A}^{(1)}, \ldots , \mat{B}^{(N)}\mat{A}^{(N)} \}$
\end{algorithmic}
\end{algorithm}

When $R \ll s$, sketched Tucker-ALS can also be used to accelerate CP decomposition.
When an exact CP decomposition of the desired rank exists, it is attainable from a Tucker decomposition of the same or greater rank.
In particular, given a CP decomposition of the desired rank for the core tensor from Tucker decomposition, it suffices to multiply respective factor matrices of the CP and Tucker decompositions to obtain a CP decomposition of the original tensor.
For the exact case, Tucker decomposition can be computed exactly via the sequentially truncated HOSVD, and for approximation, the Tucker model is generally easier to fit than CP.
Consequently, Tucker decomposition has been employed as a pre-processing step prior to running CP decomposition algorithms such as CP-ALS~\cite{carroll1980candelinc,zhou2014decomposition,bro1998improving,erichson2020randomized}.

We leverage the ability of Tucker decomposition to preserve low-rank CP structure to apply our fast randomized Tucker algorithms to low-rank CP decomposition.
We show the algorithm in \cref{alg:cp-rand-tucker}. 
In practice, the randomized Tucker-ALS algorithm takes a small number of sweeps (less than 5) to converge, and then CP-ALS can be applied on the core tensor, which is computationally efficient. 

The state-of-the-art approach for randomized CP-ALS \cite{larsen2020practical}
is to use leverage score sampling
to solve each subproblem \eqref{eq:subproblem}.
The cost sufficient to get $(1+\bigO{\epsilon})$-accurate residual norm for each subproblem is $\bigO{sR^N\log(1/\delta)/\epsilon^2}$. With the same criteria, the cost for sketched Tucker-ALS with leverage score sampling is $\bigO{sR^{N}/(\epsilon^2\delta) + R^{3(N-1)}/(\epsilon^2\delta)}$. As we can see, when $R\ll s$, the cost of each Tucker decomposition subproblem is only slightly higher than that of CP decomposition, and the fast convergence of Tucker-ALS 
makes this Tucker + CP method more efficient than directly applying CP decomposition on the input tensor.

\section{Experiments}
\label{sec:exp}
In this section, we compare our randomized algorithms with reference algorithms for both Tucker and CP decompositions on several synthetic tensors. 
We evaluate accuracy based on the final fitness $f$ for each algorithm, defined as 
    \begin{equation}
    f = 1 - \frac{\|\tsr{T} - \widetilde{\tsr{T}}\|_F}{\|\tsr{T}\|_F},
    \end{equation}
    where $\tsr{T}$ is the input tensor and $\widetilde{\tsr{T}}$ is the reconstructed low-rank tensor. 
For Tucker decomposition, we focus on the comparison of accuracy and robustness 
of attained fitness across various synthetic datasets for different algorithms.
For CP decomposition, we focus on the comparison of accuracy and 
sweep count.
All of our experiments were carried out on an Intel Core i7 2.9 GHz Quad-Core machine using NumPy routines in Python.

\subsection{Experiments for Tucker Decomposition}
\label{subsec:exp_tucker}

We compare five different algorithms for Tucker decomposition.
Two baselines from previous work are considered, standard HOOI and the original TensorSketch-based randomized Tucker-ALS algorithm, which optimizes only one factor in Tucker decomposition at a time~\cite{malik2018low}.
We compare these to
our new randomized algorithm (\cref{tucker-rals}) based on TensorSketch, random leverage score sampling, and deterministic leverage score sampling. For each randomized algorithm, we test both random initialization for factor matrices as well as the initialization scheme based on RRF detailed in \cref{sec:init}. For the baseline HOOI algorithm, we report the performance with both random and HOSVD initializations. 
We use four synthetic tensors to evaluate these algorithms.
\begin{enumerate}[itemsep=0pt,leftmargin=*]
    \item \label{tsr:dense} \textbf{Dense tensors with specific Tucker rank}. We create order 3 tensors based on randomly-generated factor matrices $\mat{B}^{(n)}\in \R^{s\times R_{\text{true}}}$ and a core tensor $\tsr{C}$,
    \begin{equation}\label{eq:tsr_dense}
     \tsr{T} = \tsr{C}\times_1\mat{B}^{(1)}\times_2\mat{B}^{(2)}\times_3\mat{B}^{(3)}. 
    \end{equation}
    Each element in the core tensor and the factor matrices are i.i.d. normally distributed random variables $\mathcal{N}(0,1)$. 
    The ratio $R_{\text{true}}/R$, where $R$ is the decomposition rank, is denoted as $\alpha$. 
    \item \label{tsr:dense-bias} \textbf{Dense tensors with strong low-rank signal}. We also test on dense tensors with strong low-rank signal,
    \begin{equation}
        \tsr{T}^{(b)} = \tsr{T} + \sum_{i=1}^n \lambda_i \vcr{a}^{(1)}_i \circ \vcr{a}^{(2)}_i \circ \vcr{a}^{(3)}_i. 
    \end{equation}
    $\tsr{T}$ is generated based on \eqref{eq:tsr_dense}, and each vector $\vcr{a}_i^{(j)}$ has unit 2-norm. The magnitudes $\lambda_i$ for $i\in[n]$ are constructed based on the power-law distribution,
    $\lambda_i = C \frac{\|\tsr{T}\|_F}{i^{1+\eta}}$. In our experiments, we set $n=5,C=3$ and $\eta=0.5$. This tensor is used to model data whose leading low-rank components obey the power-law distribution, which is common in real datasets.
    \item \label{tsr:sparse} \textbf{Sparse tensors with specific Tucker rank}. We generate tensors based on \eqref{eq:tsr_dense} with each element in the core tensor and factor matrices being
    an i.i.d normally distributed random variable $\mathcal{N}(0,1)$ with probability $p$ and zero otherwise. 
    Since each element,
    \begin{equation}\label{eq:tsr_sparse}
    \tsr{T}(i,j,k) = \sum_{x,y,z} \mat{B}^{(1)}(i, x)\cdot \mat{B}^{(2)}(j, y)\cdot \mat{B}^{(3)}(k, z)\cdot \tsr{C}(x,y,z), 
    \end{equation}
    and 
    \[
    P\left[
    \mat{B}^{(1)}(i, x)\cdot\mat{B}^{(2)}(j, y)\cdot\mat{B}^{(3)}(k, z)\cdot\tsr{C}(x,y,z) \neq 0\right]
    = p^4,
    \]
    the expected sparsity of $\tsr{T}$, which is equivalent to the probability that each element $\tsr{T}(i,j,k)=0$, is lower-bounded by $1 - R_{\text{true}}^3p^4$. 
    Through varying $p$, we generate tensors with different expected sparsity. 

    \item \label{tsr:sparse-bias} \textbf{Tensors with large coherence}. 
    We also test on tensors with large coherence,
    \begin{equation}
        \tsr{T}^{(b)} = \tsr{T} + \tsr{N}. 
    \end{equation}
    $\tsr{T}$ is generated based on \eqref{eq:tsr_dense} or \eqref{eq:tsr_sparse}, and $\tsr{N}$ contains $n \ll s$ elements with random positions and same large magnitude. In our experiments, we set $n=10$, and each nonzero element in $\tsr{N}$ has the i.i.d. normal distribution $\mathcal{N}(\|\tsr{T}\|_F/\sqrt{n}, 1)$, which means the expected norm ratio $\E[\|\tsr{N}\|_F/\|\tsr{T}\|_F] = 1$. This tensor has large coherence and is used to test the robustness problem detailed in \cref{sec:init}.
\end{enumerate}

For all the experiments, we run 5 ALS sweeps  unless otherwise specified, and calculate the fitness based on the output factor matrices as well as the core tensor. We observe that 5 sweeps are sufficient for both HOOI and randomized algorithms to converge. For each randomized algorithm, we set the sketch size to be $KR^2$. 
For the RRF-based initialization, we set the sketch size ($k$ in \cref{alg:rrf}) as $\sqrt{K}R$.

\begin{figure}[!ht]   
\centering
\subfloat[Tensor \ref{tsr:dense} with $s=200$]{\includegraphics[width=0.33\textwidth, keepaspectratio]{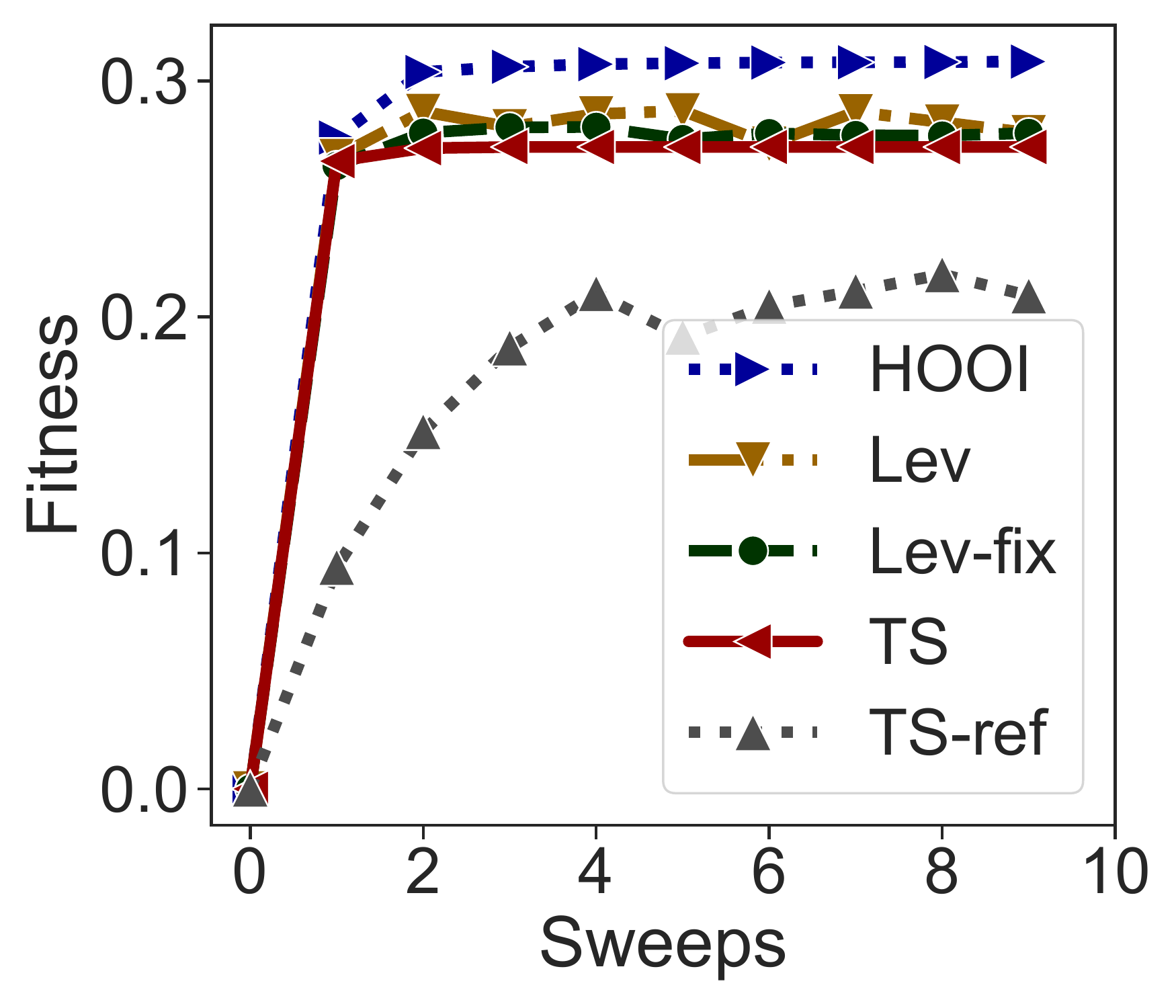}\label{figs-tuckerdetail1}}
\subfloat[Tensor \ref{tsr:dense-bias} with $s=200$]{\includegraphics[width=0.33\textwidth, keepaspectratio]{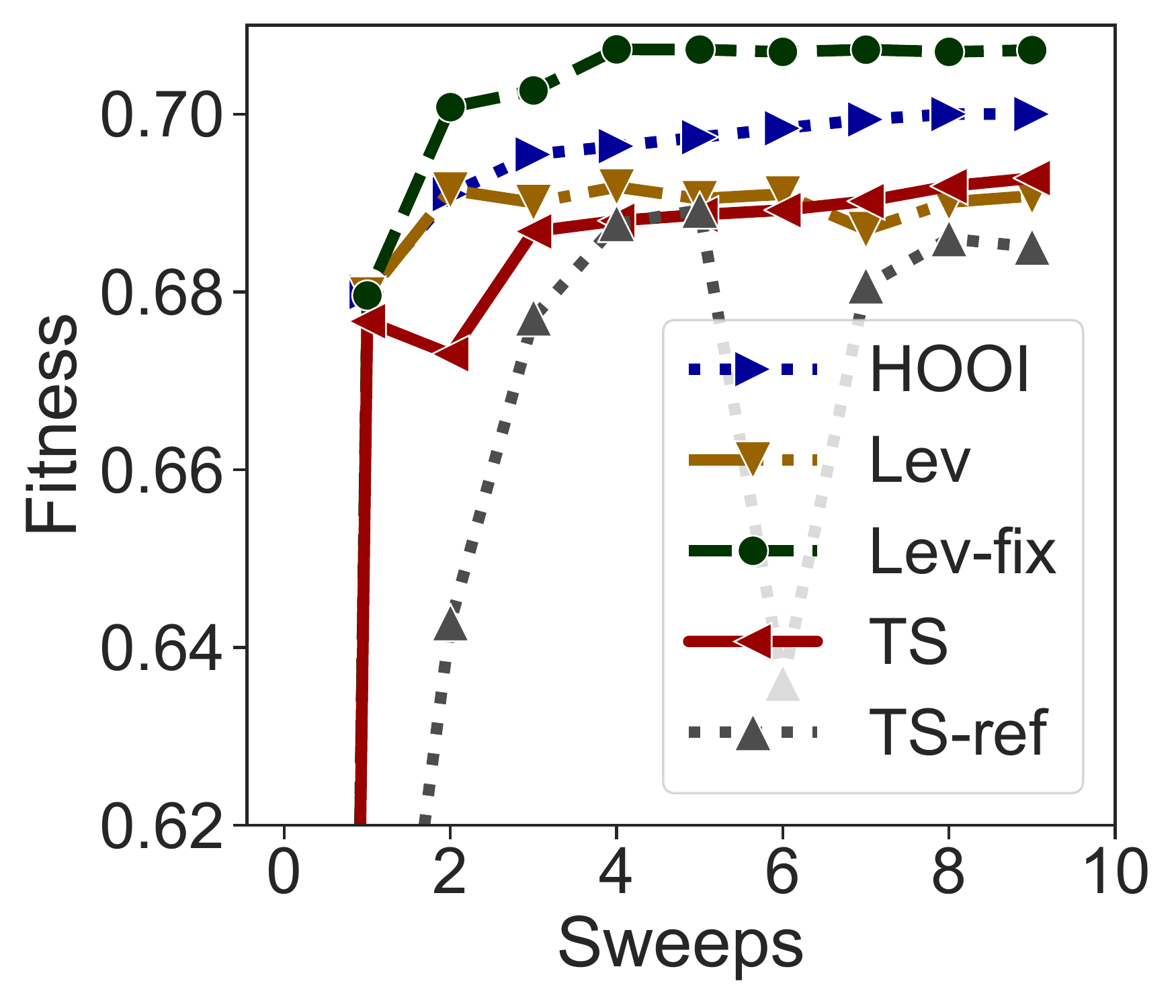}\label{figs-tuckerdetail2}}
\subfloat[Tensor \ref{tsr:sparse-bias} with $s=1000$]{\includegraphics[width=0.33\textwidth, keepaspectratio]{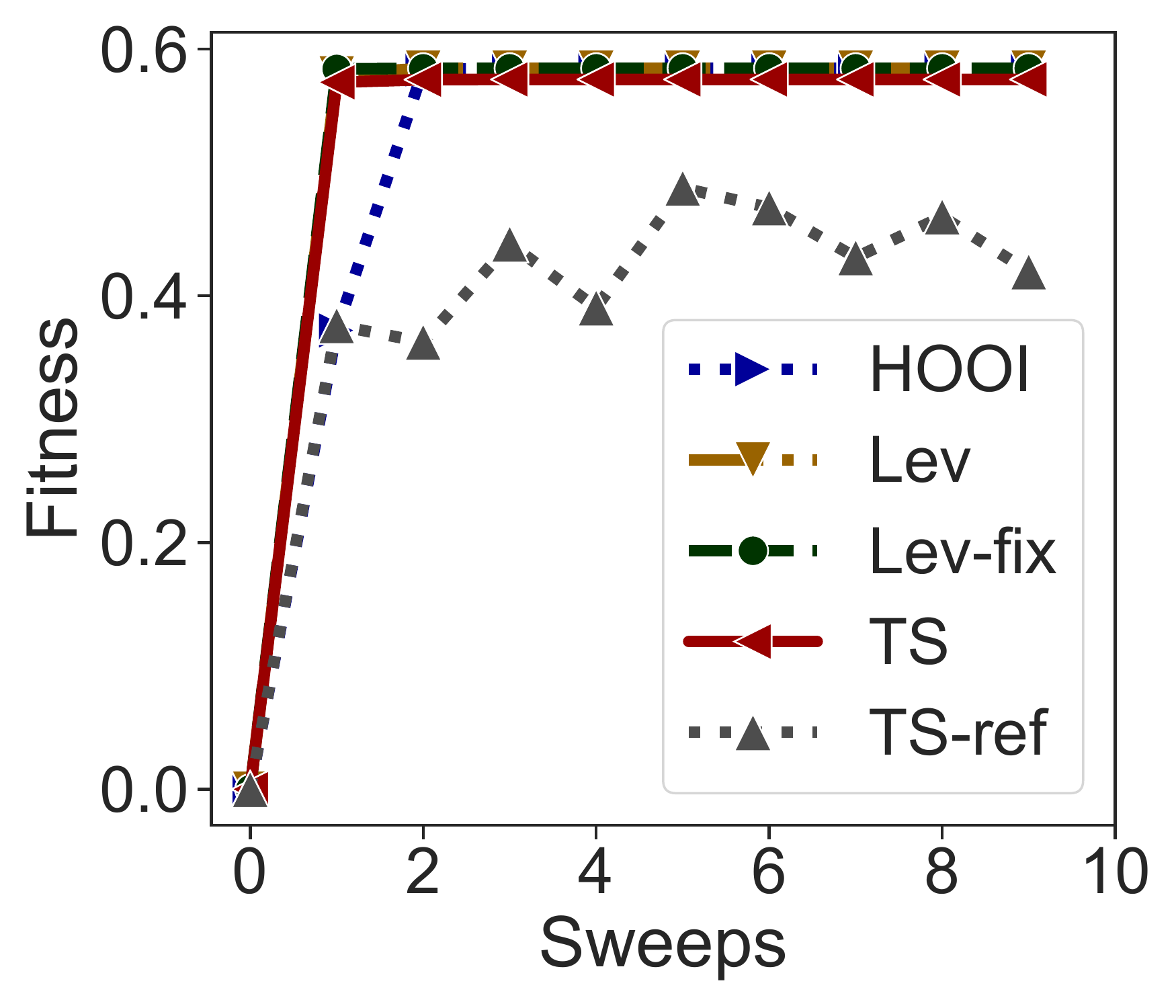}\label{figs-tuckerdetail3}}

\caption{Detailed fitness-sweeps relation for Tucker decomposition of three dense tensors with different parameters. 
For Tensor \ref{tsr:sparse-bias}, $\tsr{T}$ is generated based on \eqref{eq:tsr_dense}.
For all the experiments, we set $R=5, \alpha=1.6$, and $K=16$. 
In the plots, Lev, Lev-fix, and TS denote our new sketched Tucker-ALS scheme with leverage score random sampling, leverage score deterministic sampling, and TensorSketch, respectively. TS-ref denotes the reference sketched Tucker-ALS algorithm with TensorSketch.
HOOI is initialized with HOSVD, and all other methods are initialized with RRF (\cref{alg:rrf}). Markers represent the results per sweep.
}
\label{fig:tucker-fitness-sweep}
\end{figure}

\begin{figure}[!ht]   
\centering
\subfloat[Tensor \ref{tsr:dense} with $\alpha=1.2, K=16$]{\includegraphics[width=0.50\textwidth, keepaspectratio]{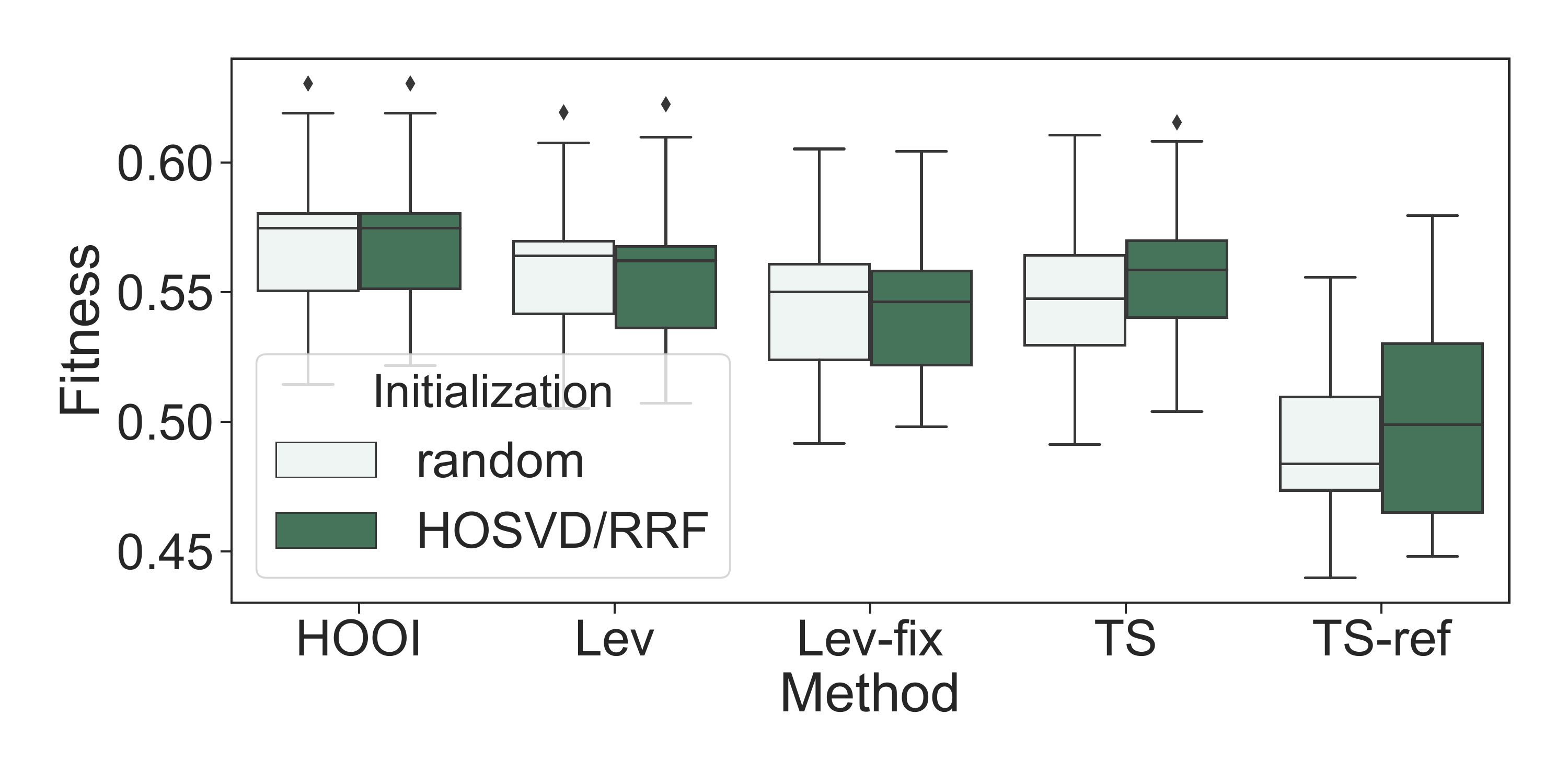}\label{figs-dense1}}
\subfloat[Tensor \ref{tsr:dense} with $\alpha=1.2$ and HOSVD/RRF init]{\includegraphics[width=0.50\textwidth, keepaspectratio]{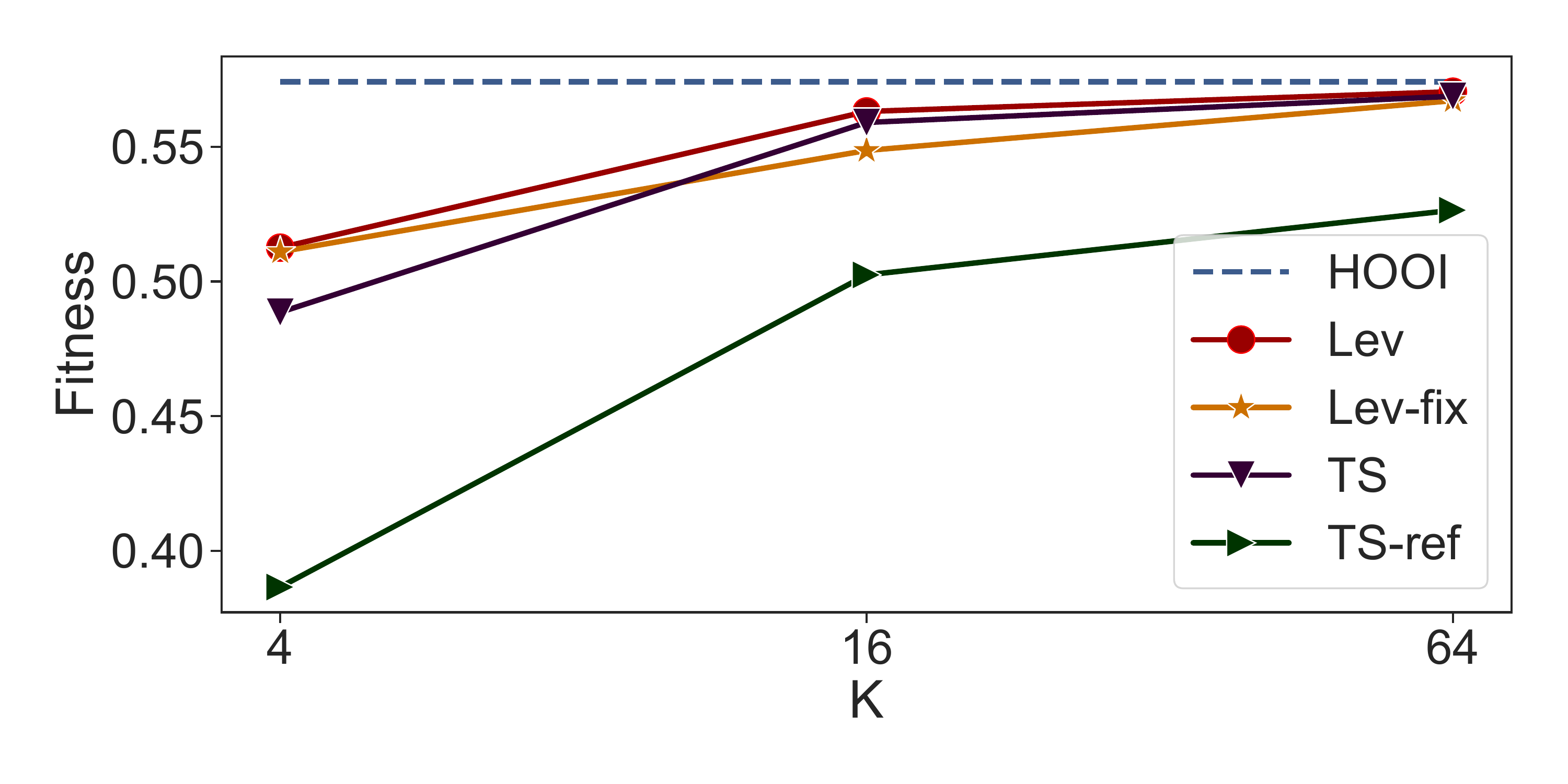}\label{figs-dense2}}

\subfloat[Tensor \ref{tsr:dense} with $\alpha=1.6, K=16$]{\includegraphics[width=0.50\textwidth, keepaspectratio]{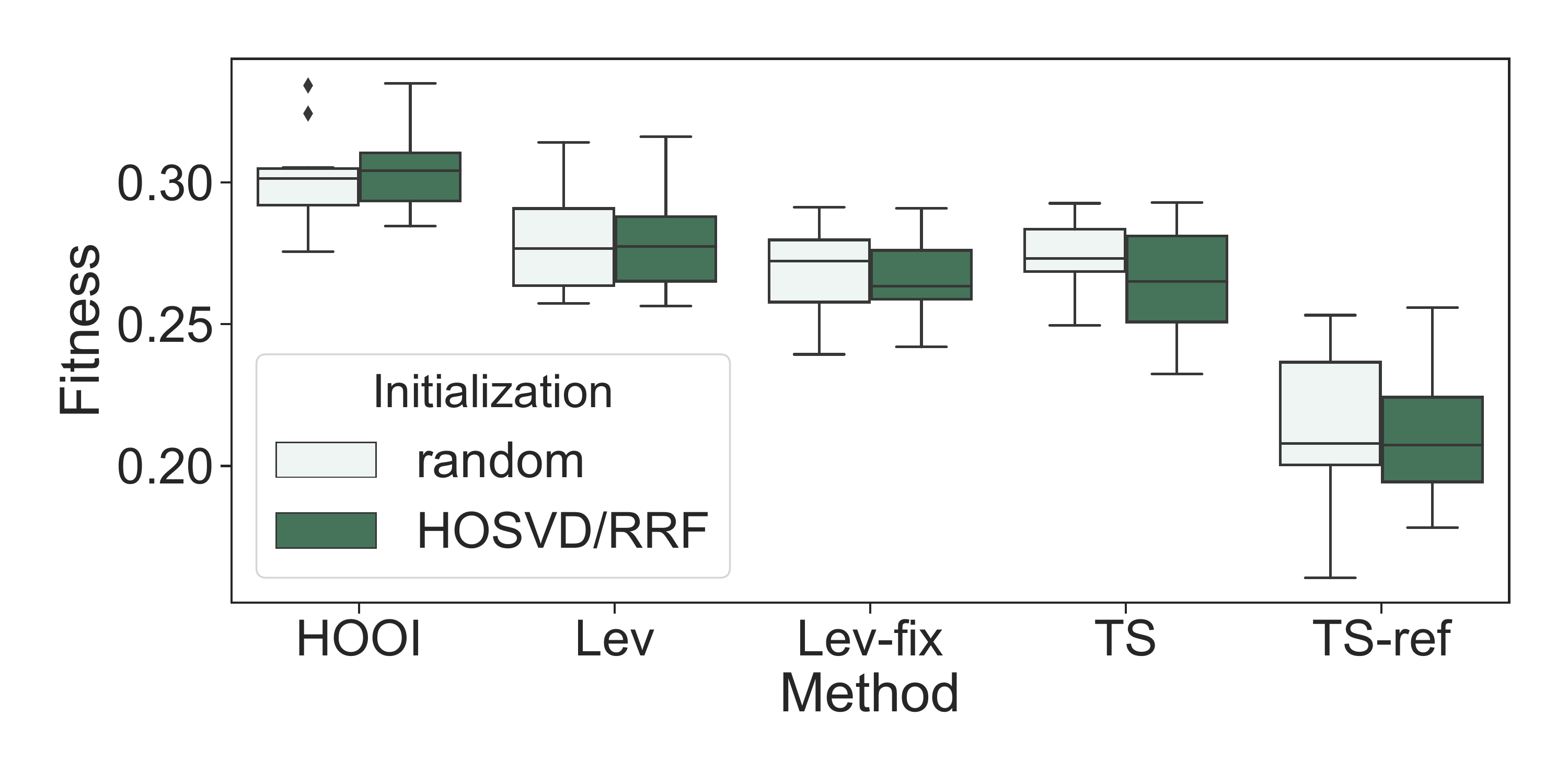}\label{figs-dense3}}
\subfloat[Tensor \ref{tsr:dense} with $\alpha=1.6$ and HOSVD/RRF init]{\includegraphics[width=0.50\textwidth, keepaspectratio]{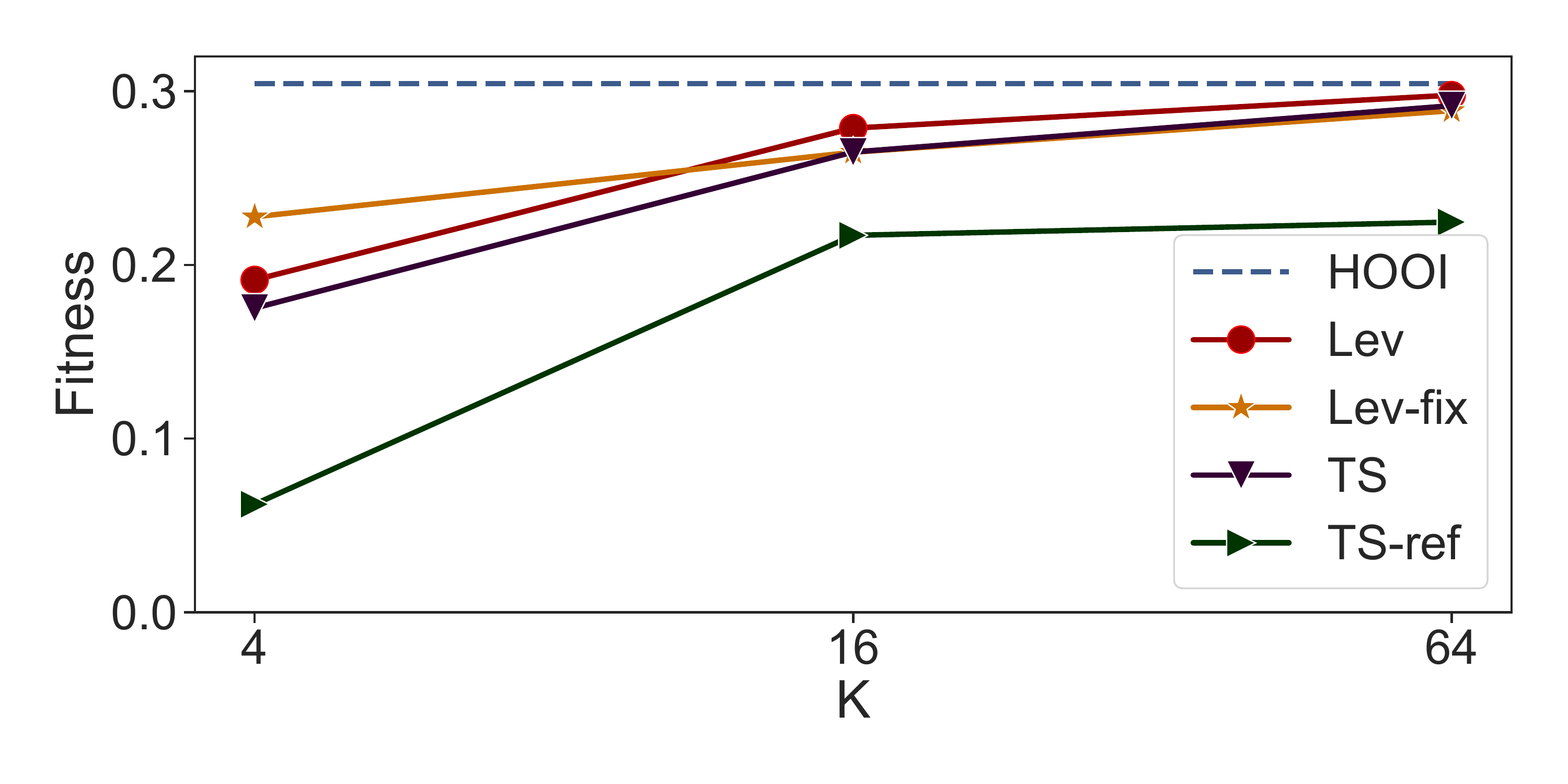}\label{figs-dense4}}

\caption[]{Experimental results for Tucker decomposition of Tensor \ref{tsr:dense}. For all the experiments, we set $s=200$ and $R=5$. 
HOSVD/RRF means HOOI is initialized with HOSVD, and all other methods are initialized with RRF (\cref{alg:rrf}).
\textbf{(a)(c)} Box plots of the final fitness for each algorithm with different input tensors. Each box is based on 10 experiments with different random seeds. 
Each box shows the 25th-75th quartiles, the median is indicated by a horizontal line inside the box, and outliers are displayed as dots.
\textbf{(b)(d)} Relation between the final fitness and sketch size parameter $K$ for each algorithm with different input tensors. Each data point is the mean of 10 experimental results with different random seeds. 
}
\label{fig:dense}
\end{figure}

\begin{figure}[!ht]   
\centering
\subfloat[Tensor \ref{tsr:dense-bias} with $\alpha=1.6, K=16,s=200$]{\includegraphics[width=0.50\textwidth, keepaspectratio]{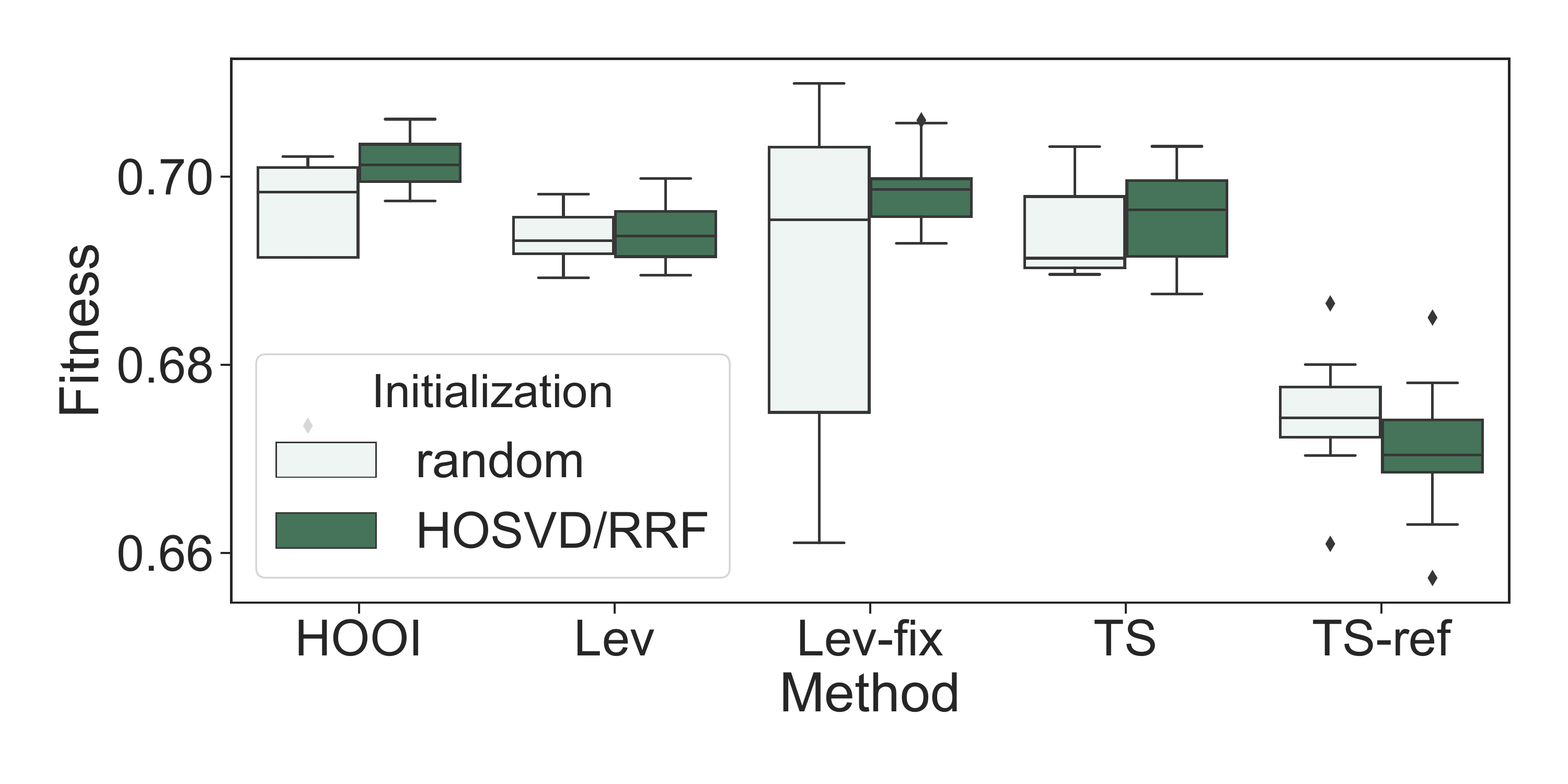}\label{figs-dense5}}
\subfloat[Tensor \ref{tsr:dense-bias} with $\alpha=1.6, s=200$ and HOSVD/RRF init]{\includegraphics[width=0.50\textwidth, keepaspectratio]{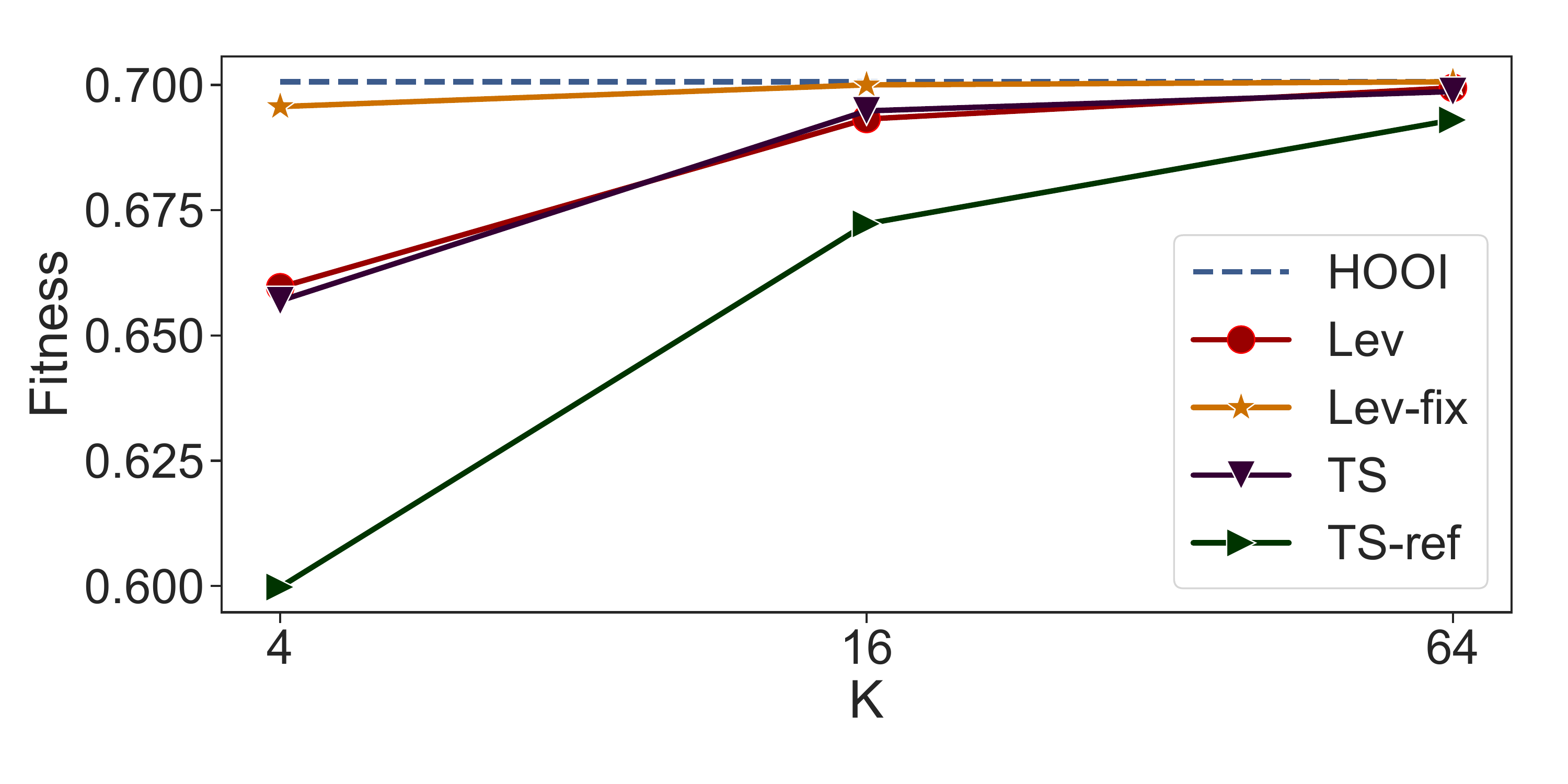}\label{figs-dense6}}

\subfloat[Tensor \ref{tsr:sparse-bias} with $\alpha=1.6, K=16$, $s=1000$]{\includegraphics[width=0.50\textwidth, keepaspectratio]{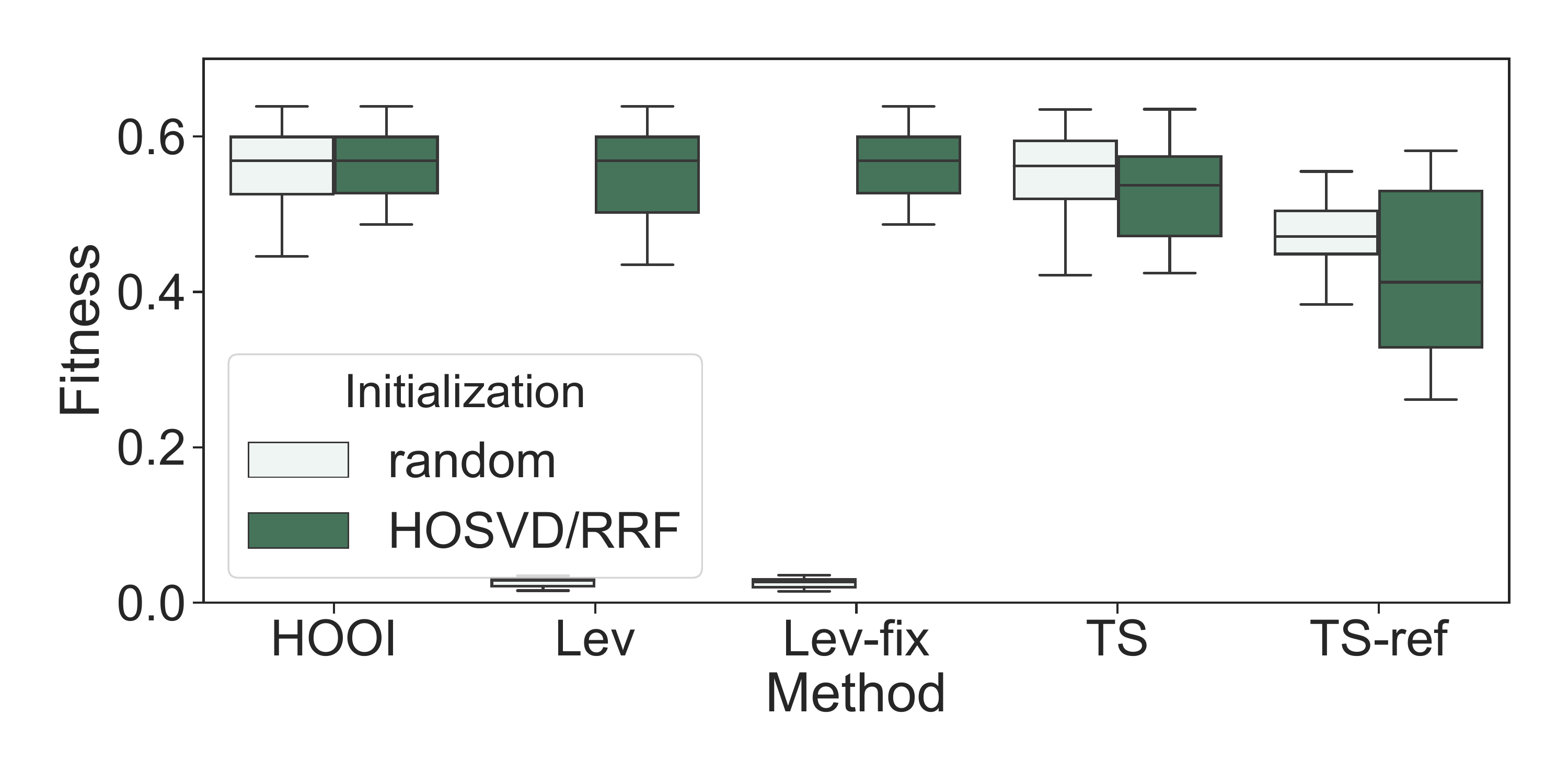}\label{figs-dense7}}
\subfloat[Tensor \ref{tsr:sparse-bias} with $\alpha=1.6, s=1000$ and HOSVD/RRF init]{\includegraphics[width=0.50\textwidth, keepaspectratio]{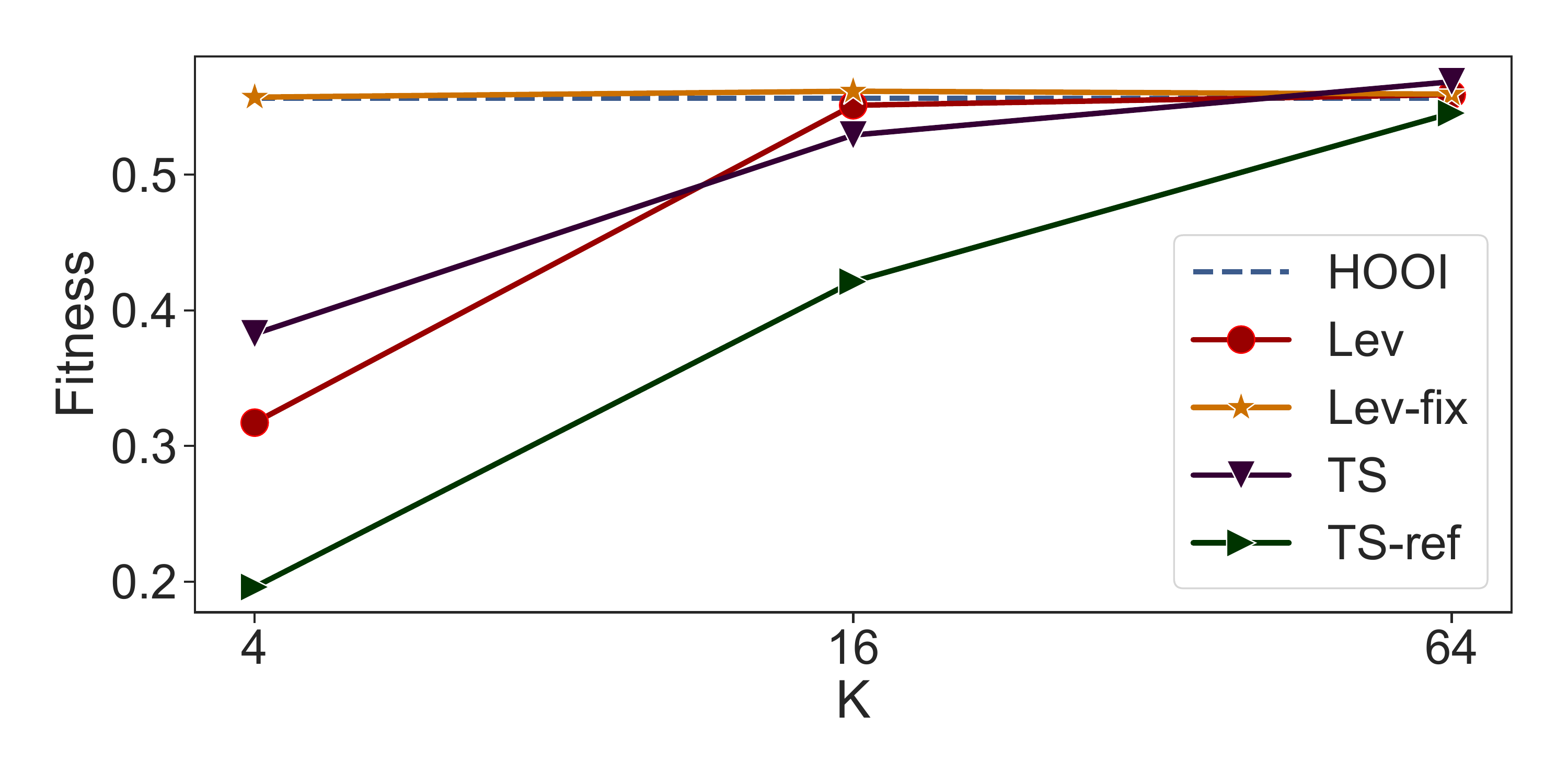}\label{figs-dense8}}

\caption[]{Experimental results for Tucker decomposition of dense tensors. 
For Tensor \ref{tsr:sparse-bias}, $\tsr{T}$ is generated based on \eqref{eq:tsr_dense}.
For all the experiments, we set $R=5$. 
\textbf{(a)(c)} Box plots of the final fitness for each algorithm with different input tensors. Each box is based on 10 experiments with different random seeds. 
\textbf{(b)(d)} Relation between the final fitness and sketch size parameter $K$ for each algorithm with different input tensors. 
}
\label{fig:dense-bias}
\end{figure}

\subsubsection{Experimental Results for Dense Tensors}

We show the detailed fitness-sweeps relation for different dense tensors in \cref{fig:tucker-fitness-sweep}. The reference randomized algorithm suffers from unstable convergence as well as low fitness, while our new randomized ALS scheme, with either leverage score sampling or TensorSketch, converges faster than the reference randomized algorithm and reaches higher accuracy. 

We show the final fitness for each method on different dense tensors in  \cref{fig:dense} and \cref{fig:dense-bias}. For Tensor \ref{tsr:dense}, we test on two different values of the true rank versus decomposition rank ratio, $\alpha$. 
We do not report the results for $\alpha=1$, which corresponds to the exact Tucker decomposition, since all the randomized algorithms can find the exact decomposition easily.
We also test on Tensor \ref{tsr:dense-bias} and Tensor \ref{tsr:sparse-bias}. As can be seen in the figure, our new randomized ALS scheme, with either leverage score sampling or TensorSketch, outperforms the reference randomized algorithm for all the tensors. The relative fitness improvement ranges from $4.5\%$ (\cref{figs-dense5},\ref{figs-dense6}) to $22.0\%$ (\cref{figs-dense3},\ref{figs-dense4}) when $K=16$.

\cref{figs-dense2},\ref{figs-dense4},\ref{figs-dense6},\ref{figs-dense8} show the relation between the final fitness and $K$. As is expected, increasing $K$ can increase the accuracy of the randomized linear least squares solve, thus improving the final fitness.  With our new randomized scheme, the relative final fitness difference between HOOI and the randomized algorithms is less than  $8.5\%$ when $K=16$, indicating the efficacy of our new scheme. 

\cref{figs-dense1},\ref{figs-dense3},\ref{figs-dense5},\ref{figs-dense7} include a comparison between random initialization and the initialization scheme based on RRF detailed in \cref{alg:rrf}. For Tensor \ref{tsr:dense}, both initialization schemes have similar performance. For the deterministic leverage score  sampling on Tensor \ref{tsr:dense-bias} (\cref{figs-dense5}), using RRF-based initialization substantially decreases variability of attained accuracy. For leverage score sampling on Tensor \ref{tsr:sparse-bias} (\cref{figs-dense7}), we observe that the random initialization is not effective, resulting in approximately zero final fitness. 
This is because the random initializations are far from the accurate solutions, and some elements with large amplitudes are not sampled in all the ALS sweeps, consistent with our discussion in \cref{sec:init}. With the RRF-based initialization, the output fitness of the algorithms based on leverage score sampling is close to HOOI. Therefore, our proposed initialization scheme is important for improving the robustness of leverage score sampling.

Although leverage score sampling has a better sketch size upper bound, based on analysis in \cref{sec:sketch-rcls}, the random leverage score sampling scheme performs similar to TensorSketch for the tested dense tensors. For leverage score sampling, \cref{figs-dense6},\ref{figs-dense8} shows that when the sketch size is small ($K=4$), the deterministic leverage score sampling scheme outperforms the random sampling scheme for Tensor \ref{tsr:dense-bias} and Tensor \ref{tsr:sparse-bias}. This means that when the tensor has a strong low-rank signal, the deterministic sampling scheme can be better, consistent with the results in \cite{papailiopoulos2014provable}.

\subsubsection{Experimental Results for Sparse Tensors}

\begin{figure}[!ht]   
\centering
\subfloat[Tensor \ref{tsr:sparse} with $p=0.5$]{\includegraphics[width=0.50\textwidth, keepaspectratio]{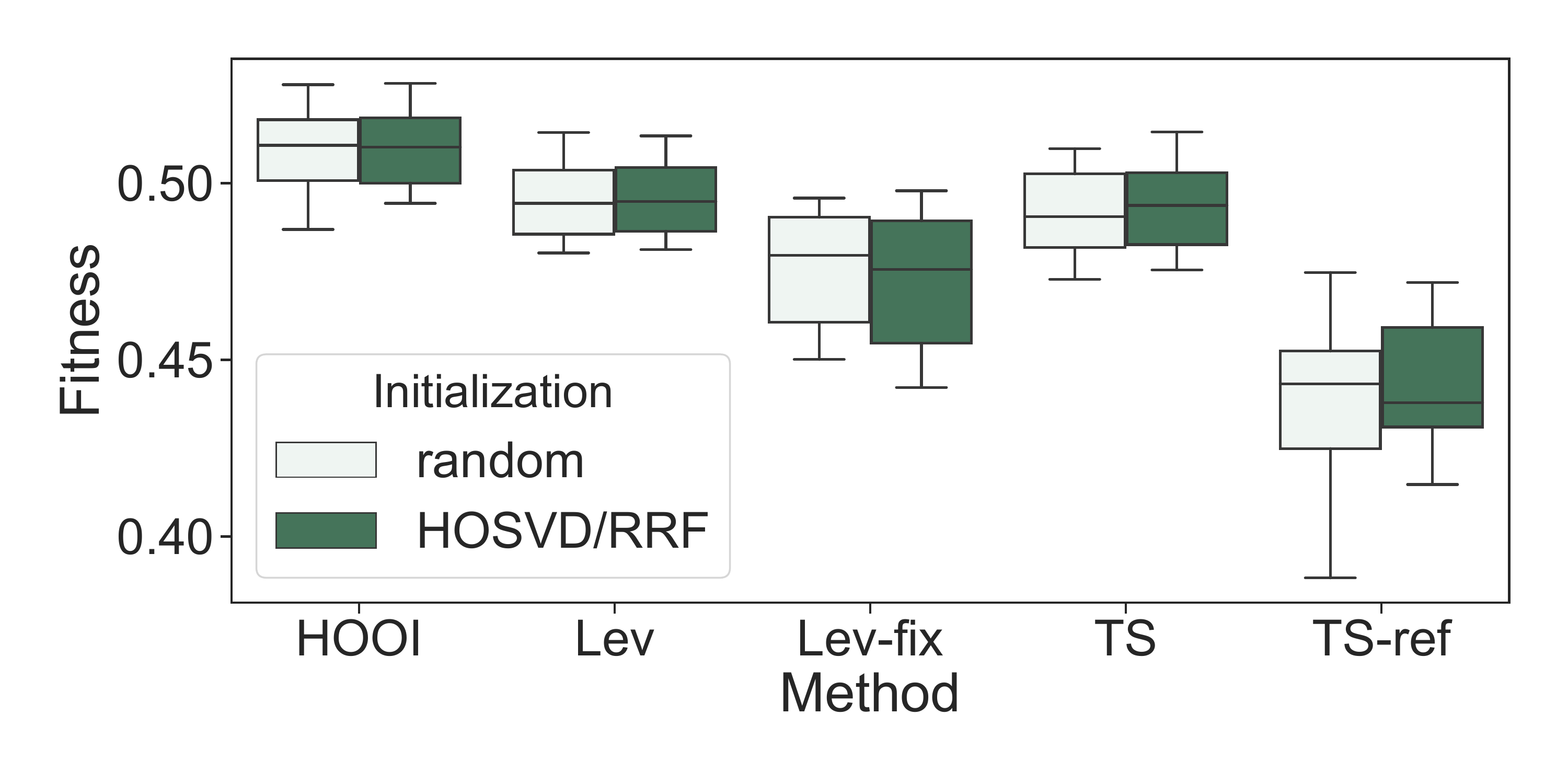}\label{figs-sparse1}}
\subfloat[Tensor \ref{tsr:sparse-bias} with $p=0.5$]{\includegraphics[width=0.50\textwidth, keepaspectratio]{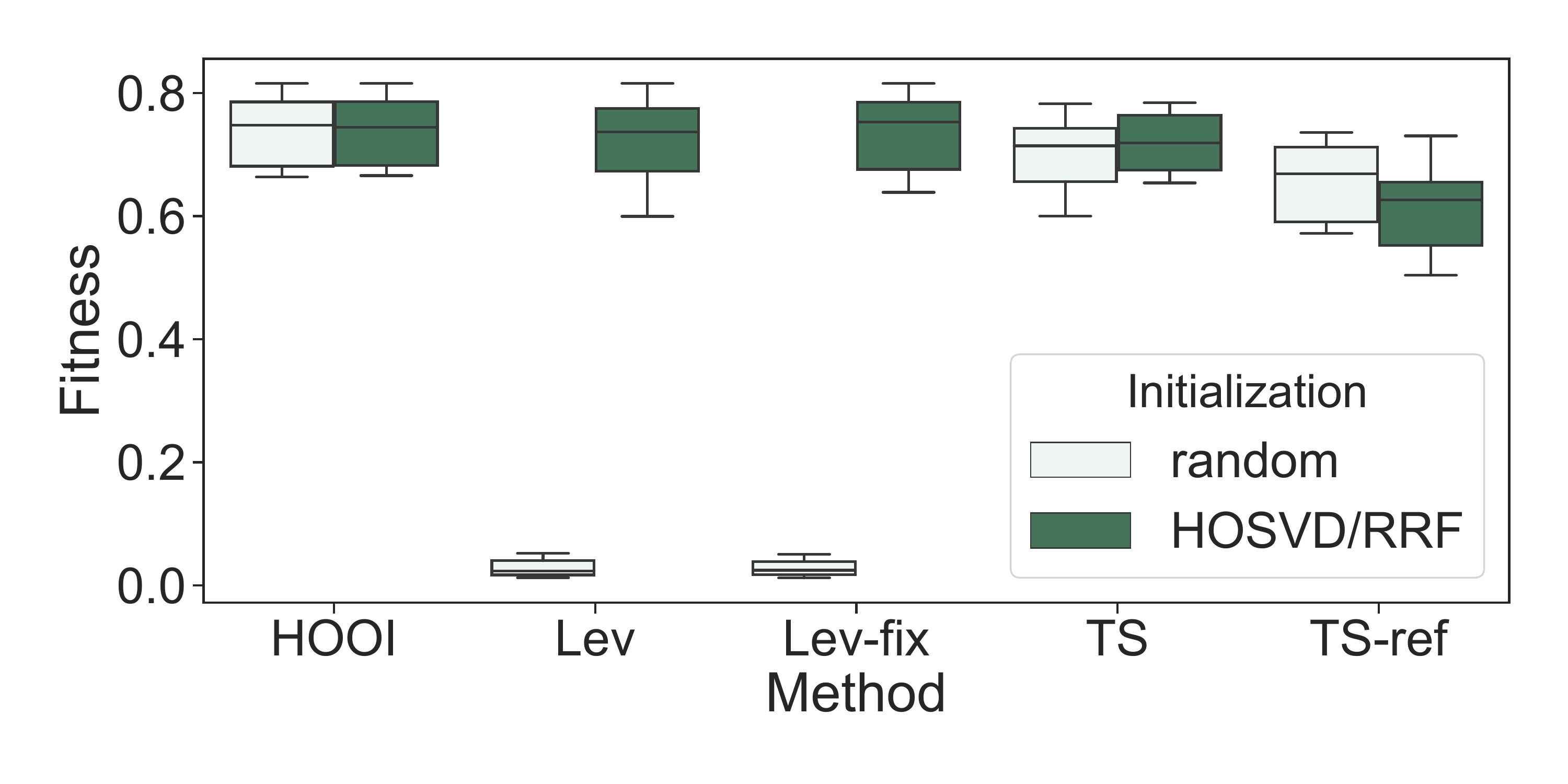}\label{figs-sparse2}}

\subfloat[Tensor \ref{tsr:sparse} with $p=0.1$]{\includegraphics[width=0.50\textwidth, keepaspectratio]{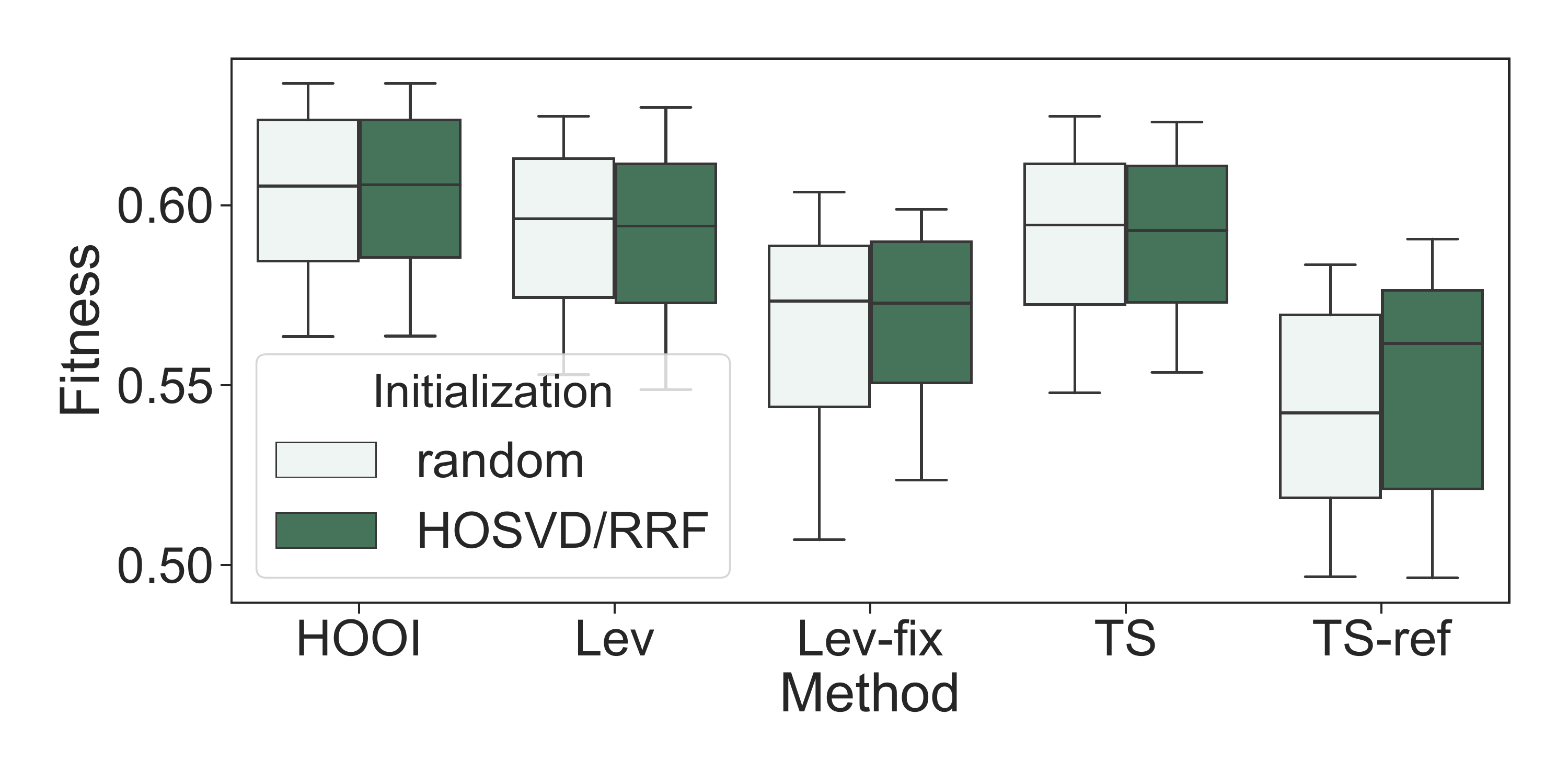}\label{figs-sparse3}}
\subfloat[Tensor \ref{tsr:sparse-bias} with $p=0.1$]{\includegraphics[width=0.50\textwidth, keepaspectratio]{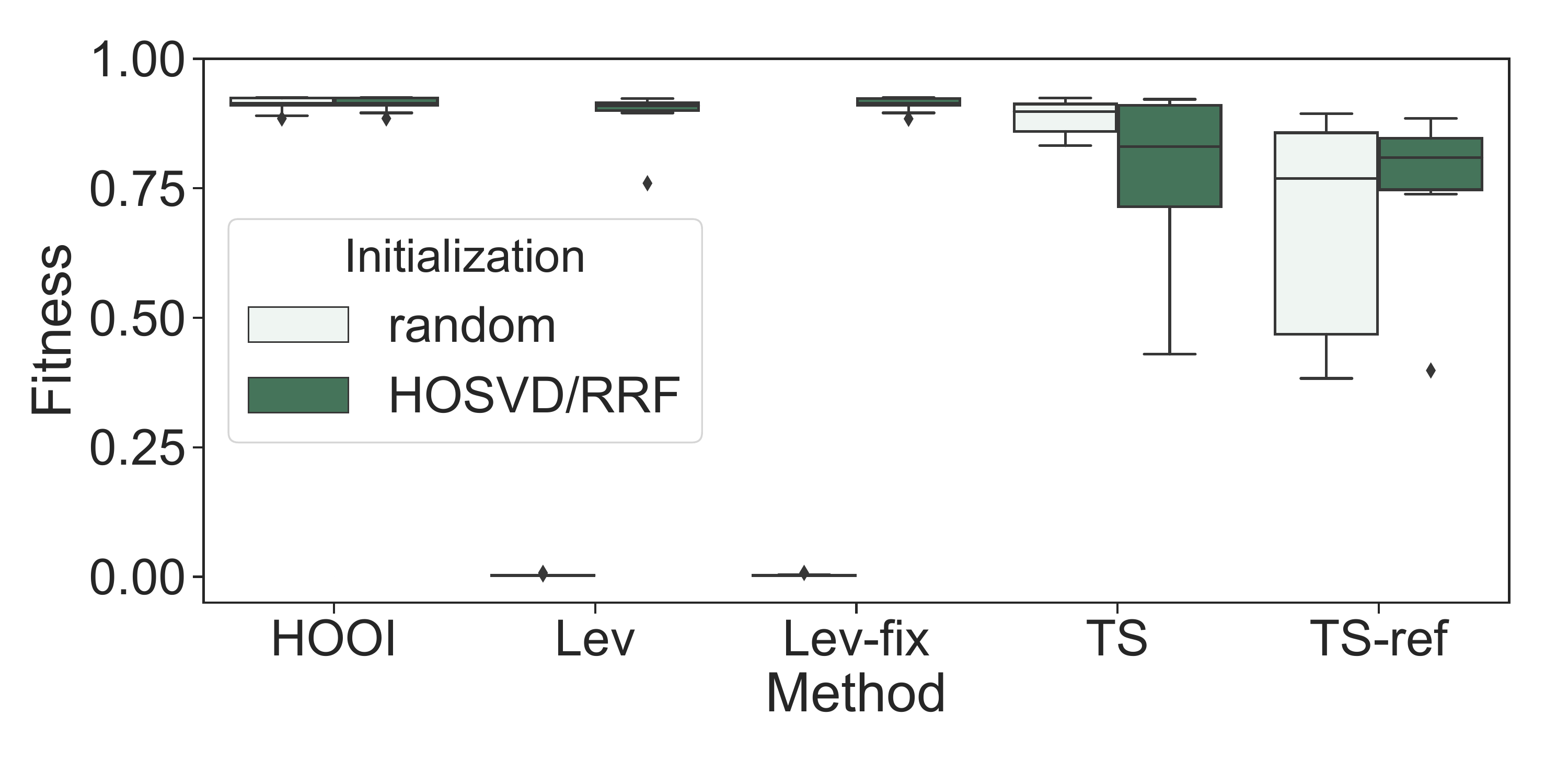}\label{figs-sparse4}}

\subfloat[Tensor \ref{tsr:sparse} with $p=0.02$]{\includegraphics[width=0.50\textwidth, keepaspectratio]{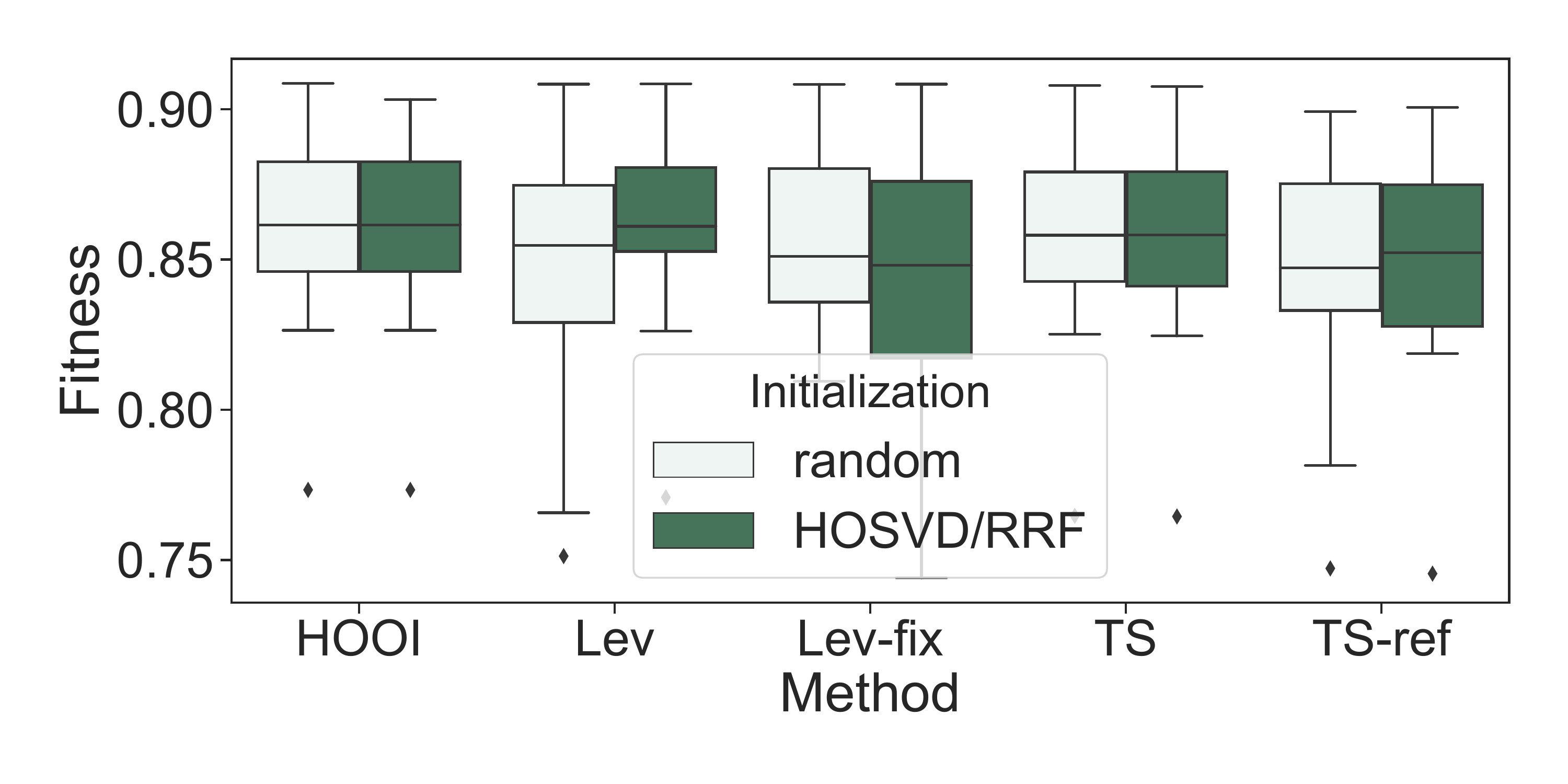}\label{figs-sparse5}}
\subfloat[Tensor \ref{tsr:sparse-bias} with $p=0.02$]{\includegraphics[width=0.50\textwidth, keepaspectratio]{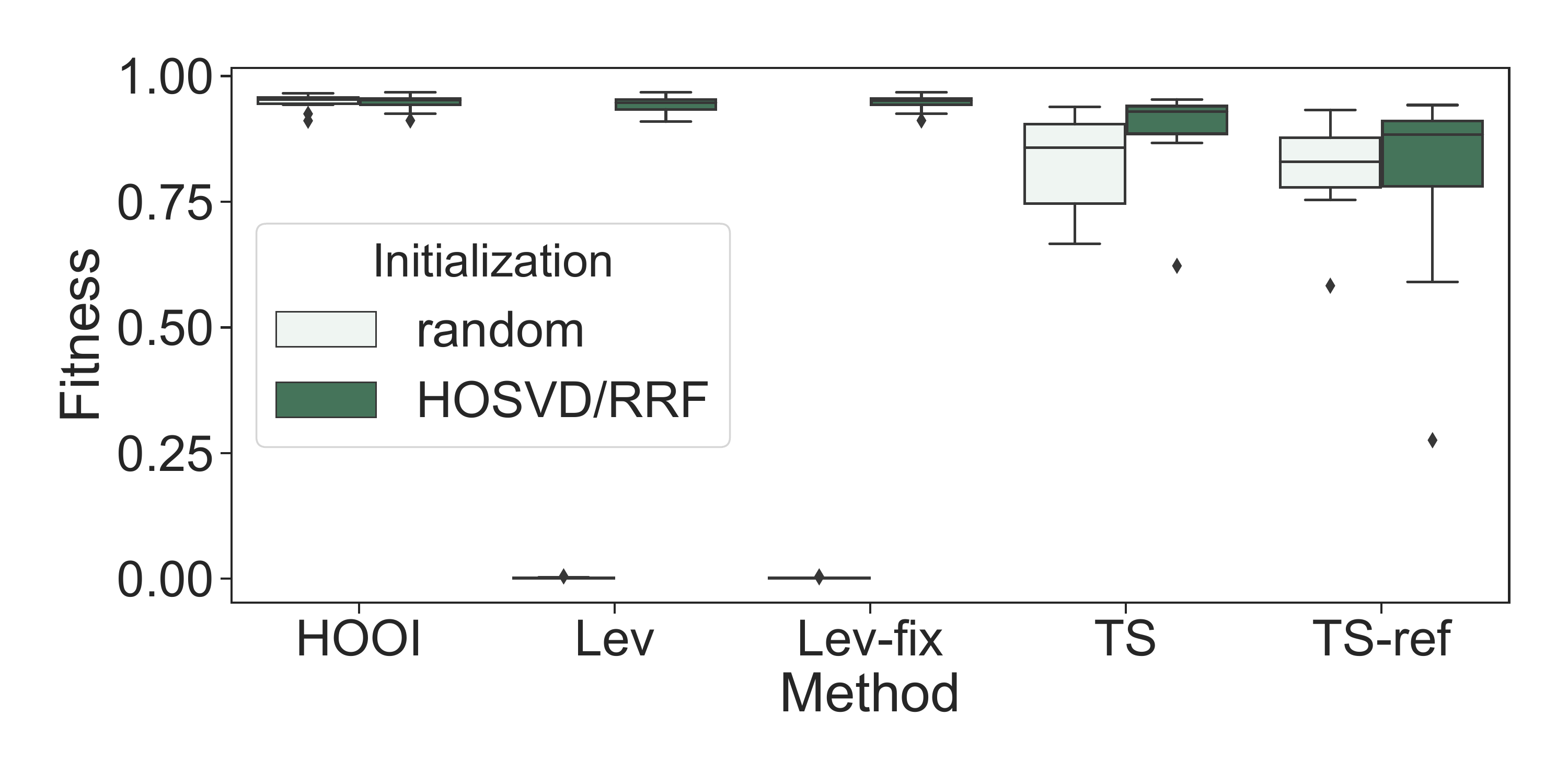}\label{figs-sparse6}}
\caption{Experimental results for Tucker decomposition of sparse tensors. For all the experiments, we set $s=2000, R=10, \alpha=1.2$ and $K=16$. 
\textbf{(a)(c)(e)} Box plots of the final fitness for each algorithm on Tensor~\ref{tsr:sparse} with different sparsity parameter $p$. 
\textbf{(b)(d)(f)} Box plots of the final fitness for each algorithm on Tensor~\ref{tsr:sparse-bias} with different sparsity parameter $p$. Each box is based on 10 experiments with different random seeds. 
}
\label{fig:sparse}
\end{figure}

We show our experimental results for sparse tensors in \cref{fig:sparse}. For both Tensor~\ref{tsr:sparse} and Tensor~\ref{tsr:sparse-bias}, we test on tensors with different sparsity via varying the parameter $p$.  When $p=0.1$ (\cref{figs-sparse3}, \ref{figs-sparse4}), the expected sparsity of the tensor is greater than 0.9. When $p=0.02$ (\cref{figs-sparse5}, \ref{figs-sparse6}), the expected sparsity of the tensor is greater than $0.9998$. 

The results for Tensor \ref{tsr:sparse} are shown in \cref{figs-sparse1},\ref{figs-sparse3},\ref{figs-sparse5}. 
Our new randomized ALS scheme, with either leverage score sampling or TensorSketch, outperforms the reference randomized algorithm with $p=0.1$ and $p=0.5$.
The relative fitness improvement ranges from $3.6\%$ (\cref{figs-sparse3}) to $12.7\%$ (\cref{figs-sparse1}).
The performance of our new scheme is comparable to the reference with $p=0.02$. 
The reason for the reduced improvements
is that these tensors have high decomposition fitness ($0.8\sim 0.9$) and each non-zero element has the same distribution, 
so sophisticated sampling is not needed to achieve high accuracy.
Similar to the case of dense tensors shown in \cref{figs-dense1},\ref{figs-dense3}, we observe similar behavior
for Tensor~\ref{tsr:sparse} with random initialization and RRF-based initialization. 

The results for Tensor \ref{tsr:sparse-bias} are shown in \cref{figs-sparse2},\ref{figs-sparse4},\ref{figs-sparse6}. Our new randomized ALS scheme outperforms the reference randomized algorithm for all the cases. Similar to the case of dense tensors (\cref{figs-dense7}),
for leverage score sampling, the random initialization results in approximately zero final fitness, and the RRF-based initialization can greatly improve the output fitness. Therefore, the RRF-based initialization scheme is important for improving the robustness of leverage score sampling.

On the contrary, TensorSketch based algorithms are not sensitive to the choice of
initialization scheme. Although they perform much better compared to the leverage score sampling with random initialization, the output fitness is still a bit worse than HOOI and can have relatively larger variance (\cref{figs-sparse4},\ref{figs-sparse6}). This means TensorSketch is less effective than leverage score sampling with RRF initialization for this tensor.

In summary, we observe the algorithm combining leverage score sampling, the RRF-based initialization and our new ALS scheme 
achieves the highest accuracy and the most robust performance across test problems among randomized schemes.

\subsection{Experiments for CP Decomposition}

\begin{figure}[!ht]   
\centering
\subfloat[$\alpha=1.2$]{\includegraphics[width=0.5\textwidth, keepaspectratio]{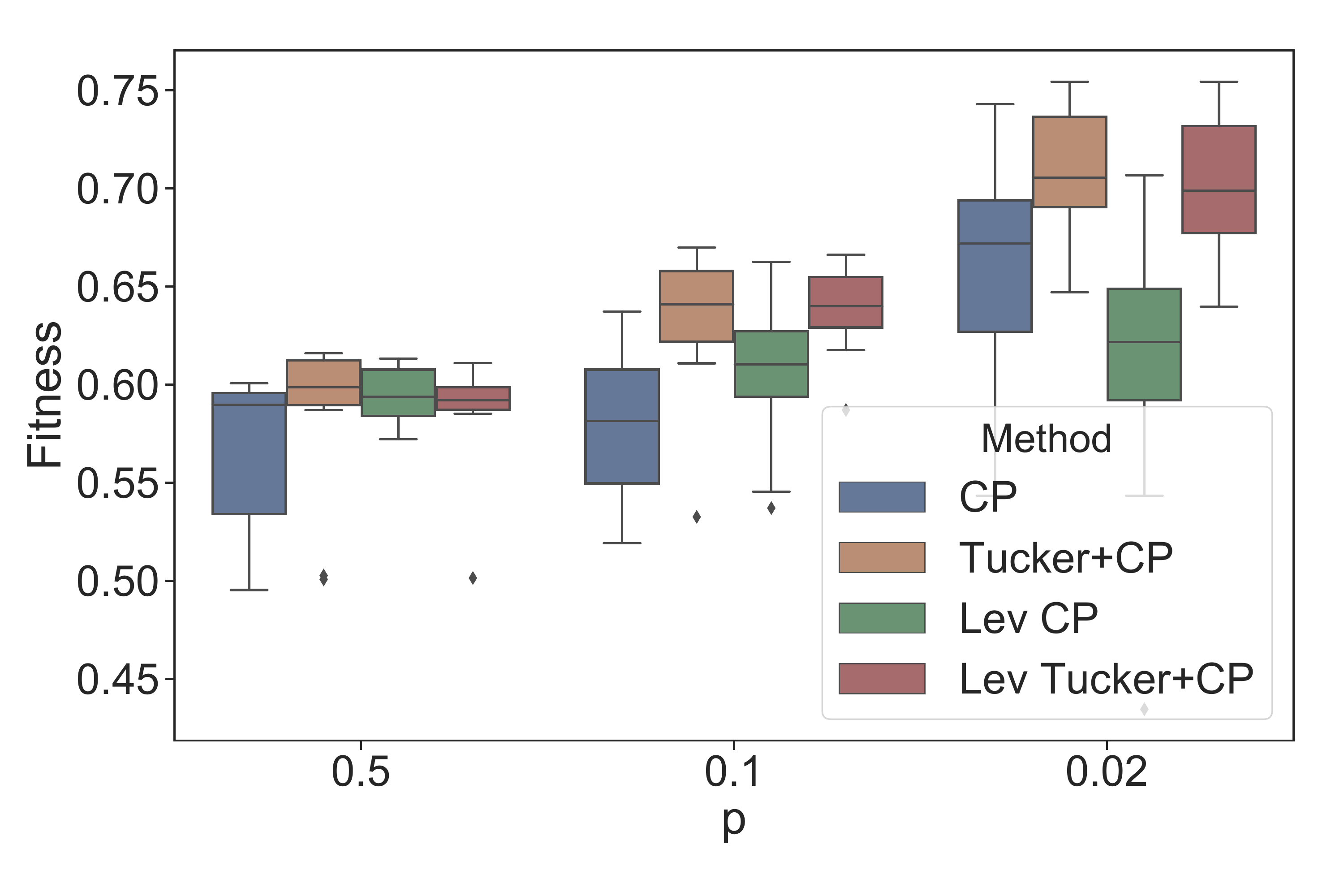}\label{figs-cp1}}
\subfloat[$\alpha=1.6$]{\includegraphics[width=0.5\textwidth, keepaspectratio]{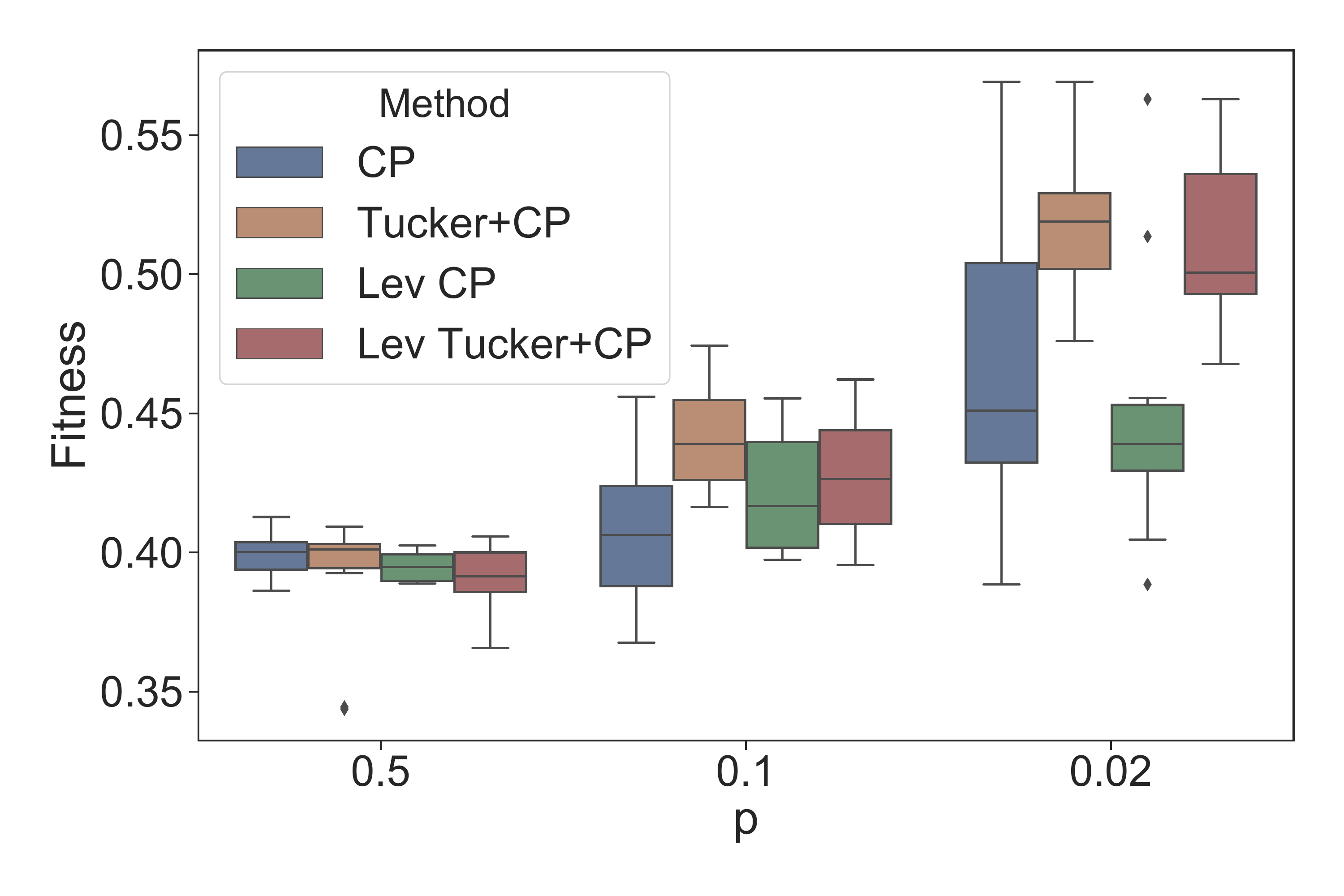}\label{figs-cp2}}

\caption{Relation between final fitness and sparsity parameter $p$ for CP decomposition. For all the experiments, we set $s=2000, R=10$ and $K=16$. In the plots, CP denotes running CP-ALS, Tucker+CP denotes running the Tucker HOOI + CP-ALS algorithm, Lev CP denotes running leverage score sampling based randomized CP-ALS, and Lev Tucker+CP denotes running the leverage score sampling based Tucker-ALS + CP-ALS algorithm.
Each box is based on 10 experiments with different random seeds. 
}
\label{fig:cp}
\end{figure}

\begin{figure}[!ht]   
\centering
\subfloat[$p=0.5$]{\includegraphics[width=0.33\textwidth, keepaspectratio]{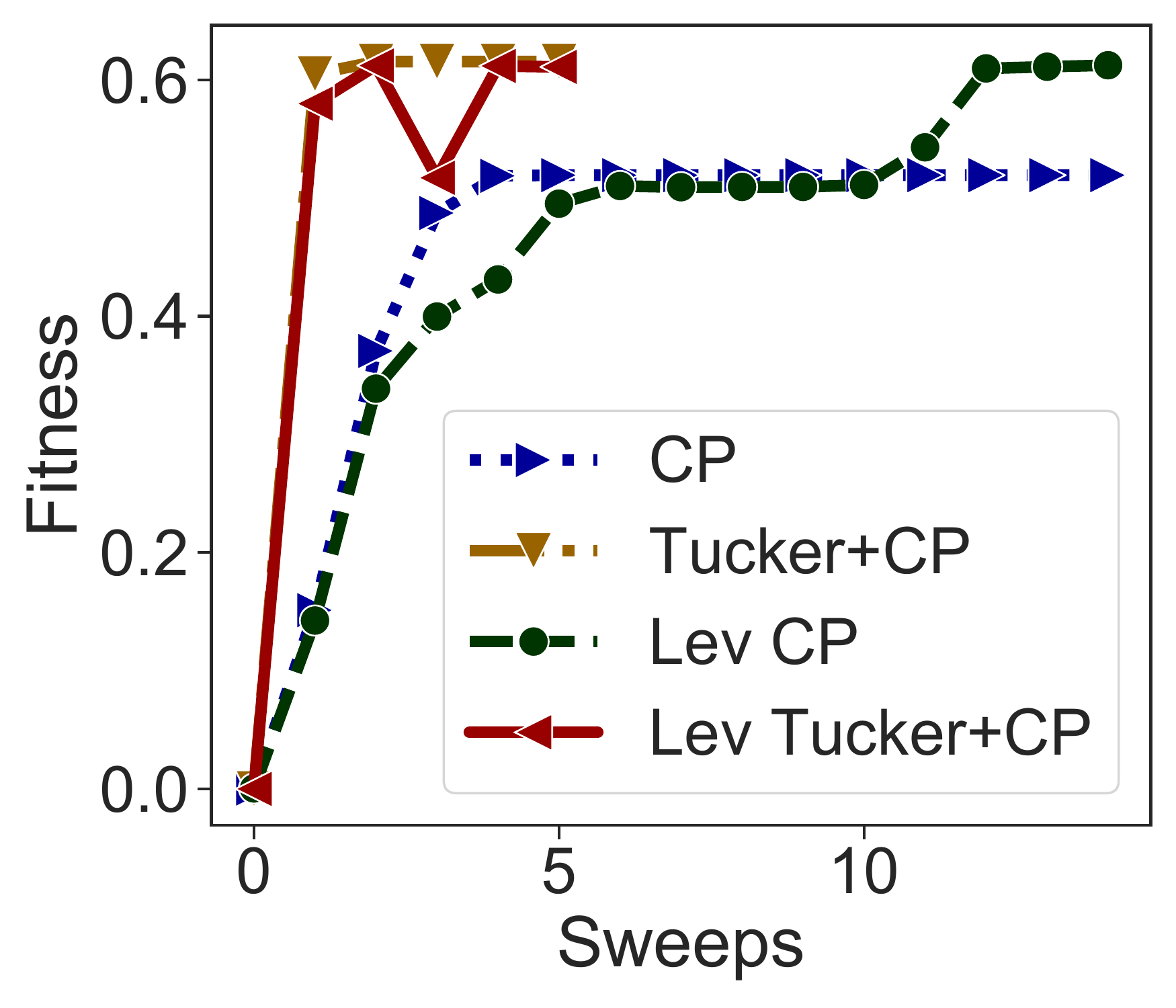}\label{figs-cp3}}
\subfloat[$p=0.1$]{\includegraphics[width=0.33\textwidth, keepaspectratio]{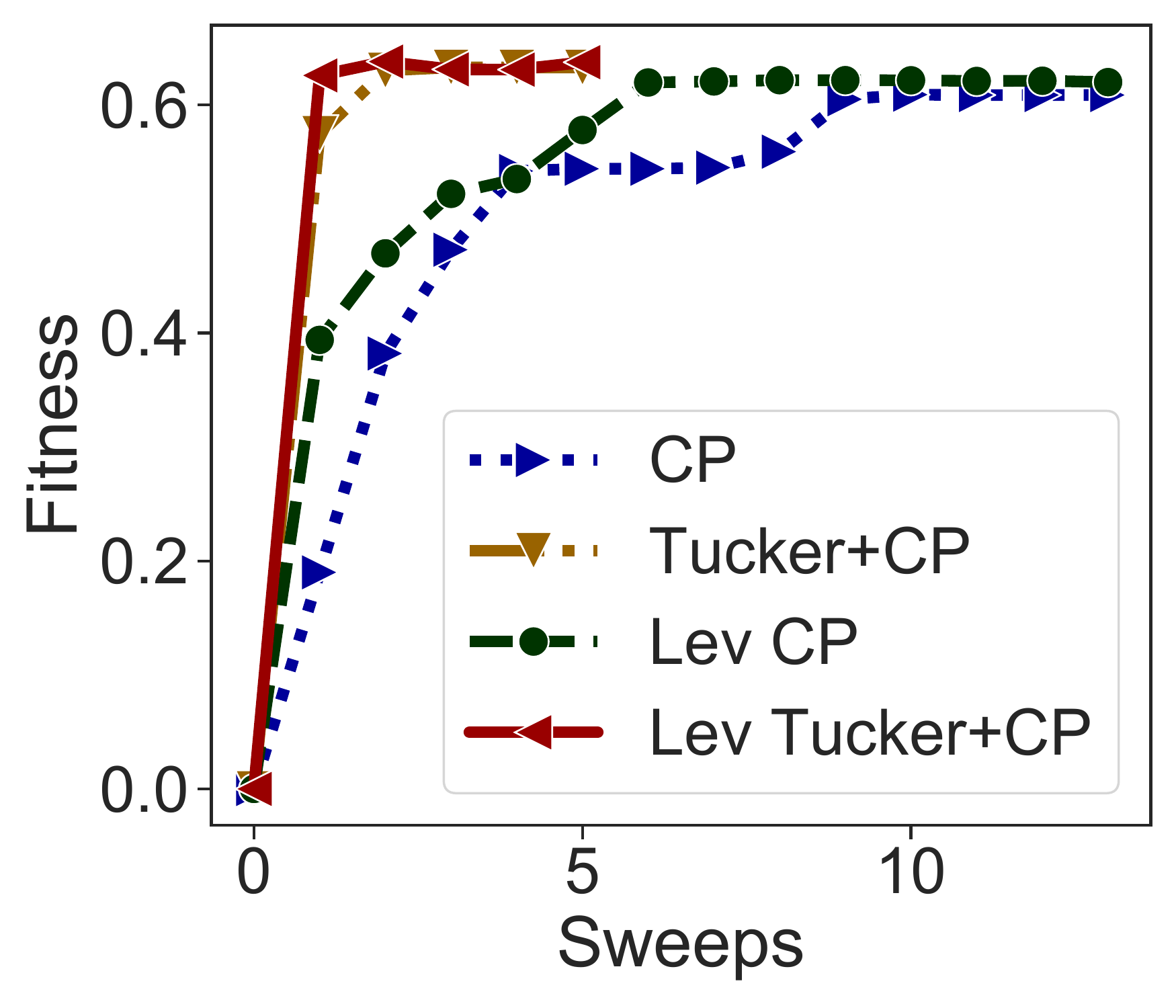}\label{figs-cp4}}
\subfloat[$p=0.02$]{\includegraphics[width=0.33\textwidth, keepaspectratio]{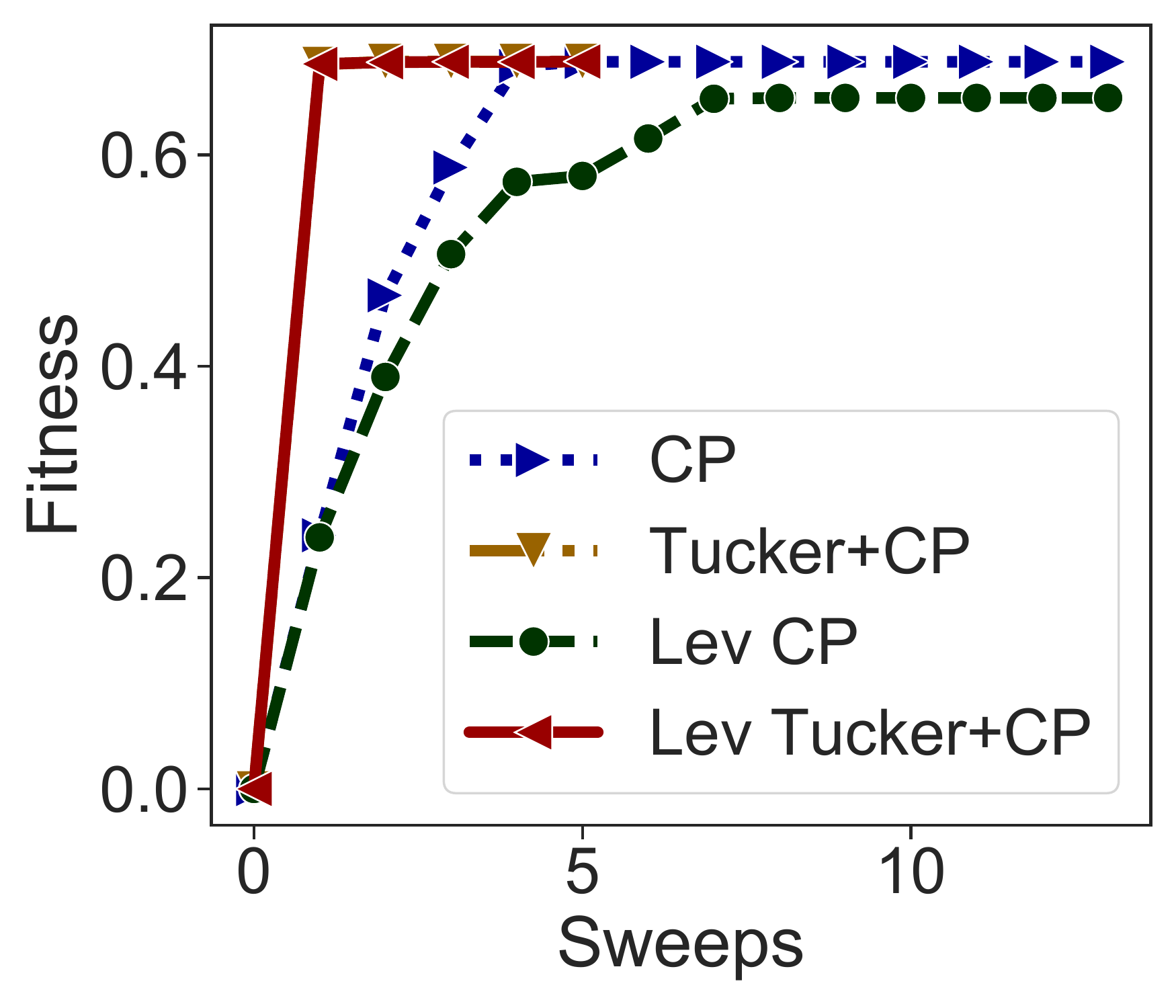}\label{figs-cp5}}

\caption{Detailed fitness-sweeps relation for CP decomposition of three tensors with different parameters. For all the experiments, we set $s=2000, R=10, \alpha=1.2$ and $K=16$. Markers represent the results per sweep. For Tucker + CP algorithms, the fitness shown for $i$th sweep is the output fitness after running $i$ Tucker sweeps along with 25 CP-ALS sweeps on core tensors afterwards.
}
\label{fig:cp-fitness-sweep}
\end{figure}

We show the efficacy of accelerating CP decomposition via performing Tucker compression first. We compare four different algorithms, the standard CP-ALS algorithm, the Tucker HOOI + CP-ALS algorithm, sketched CP-ALS, where the sketching matrix is applied to each linear least squares subproblem \eqref{eq:subproblem}, as well as the sketched Tucker-ALS + CP-ALS algorithm shown in \cref{alg:cp-rand-tucker}. 
Random leverage score sampling is used for sketching, since it has been shown to be efficient for both Tucker (\cref{subsec:exp_tucker}) and CP (reference \cite{larsen2020practical}) decompositions. We use the following synthetic tensor to evaluate these four algorithms,
\begin{equation}
    \tsr{T} = \sum_{i=1}^{R_{\text{true}}} \vcr{a}^{(1)}_i \circ \vcr{a}^{(2)}_i \circ \vcr{a}^{(3)}_i, 
\end{equation}
where each element in $\vcr{a}^{(j)}_i$ is an i.i.d normally distributed random variable $\mathcal{N}(0,1)$ with probability $p$ and is zero otherwise. 
This guarantees that the expected sparsity of $\tsr{T}$ is lower-bounded by $1 - R_{\text{true}}p^3$. 
The ratio $R_{\text{true}}/R$, where $R$ is the decomposition rank, is denoted as $\alpha$. 

For (sketched) Tucker + CP algorithms, we run 5 (sketched) Tucker-ALS sweeps first, and then run the CP-ALS algorithm on the core tensor for 25 sweeps. RRF-based initialization is used for Tucker-ALS, and HOSVD on the core tensor is used to initialize the factor matrices of the small CP decomposition problem. For (sketched) CP-ALS algorithms, we also use the RRF-based initialization and run 30 sweeps afterwards, which is sufficient for CP-ALS to converge based on our experiments. This initialization makes sure that leverage score sampling is effective for sparse tensors. We set the sketch size as $KR^2$ for  both algorithms. The constant factor $K$ reveals the accuracy of each subproblem. 
For the RRF-based initialization, we set the sketch size ($k$ in \cref{alg:rrf}) as $\sqrt{K}R$.

We show the relation between final fitness and the tensor sparsity parameter, $p$, in \cref{fig:cp}. As can be seen, for all the tested tensors, the Tucker + CP algorithms perform similarly, and usually better than directly performing CP decomposition. When the input tensor is sparse ($p=0.1$ and $0.02$), the advantage of the Tucker + CP algorithms is greater.
The sketched Tucker-ALS + CP-ALS scheme has a comparable performance compared to Tucker HOOI + CP-ALS, while requiring less computation.

We show the detailed fitness-sweeps relation in \cref{fig:cp-fitness-sweep}. We observe that for (sketched) CP-ALS, more than 10 sweeps are necessary for the algorithms to converge. On the contrary, less than 5 Tucker-ALS sweeps are needed for the sketched Tucker + CP scheme, making it more efficient. 

In summary, we observe CP decomposition can be more efficiently and accurately calculated based on the sketched Tucker + CP method.

\section{Conclusions}
\label{sec:conclu}

In this work, we propose a fast and accurate sketching based ALS algorithm for Tucker decomposition, which consists of a sequence of sketched rank-constrained linear least squares subproblems. 
Theoretical sketch size upper bounds are provided to achieve $\bigO{\epsilon}$-relative residual norm error for each subproblem with two sketching techniques, TensorSketch and leverage score sampling.
For both techniques, our bounds are at most $\bigO{1/\epsilon}$ times the sketch size upper bounds for the unconstrained linear least squares problem. We also propose an initialization scheme based on randomized range finder to improve the accuracy of leverage score sampling based randomized Tucker decomposition of tensors with high coherence.
Experimental results show that this new ALS algorithm is more accurate than the existing sketching based randomized algorithm for Tucker decomposition. 
This Tucker decomposition algorithm also yields an efficient CP decomposition method, where randomized Tucker compression is performed first, and CP decomposition is applied to the Tucker core tensor afterwards. Experimental results show this algorithm not only converges faster, but also yields more accurate CP decompositions.

We leave high-performance implementation of our sketched ALS algorithm as well as testing its performance on large-scale real sparse datasets for future work.
Additionally, although our theoretical analysis shows a much tighter sketch size upper bound for leverage score sampling compared to TensorSketch, their experimental performance under the same sketch size for multiple tensors are similar. Therefore, it will be  of interest to investigate potential improvements to sketch size bounds for TensorSketch.


\small
\bibliographystyle{abbrv}
\bibliography{main}


\normalsize
\appendix
\section{Detailed Proofs for \cref{sec:sketch-rcls}}\label{appendix:detailed_proofs}

In this section, we provide detailed proofs for the sketch size upper bounds of both sketched unconstrained and rank-constrained linear least squares problems. 
In \cref{subsec:general_analysis_unconstrained}, we define the $(\gamma, \delta, \epsilon)$-accurate sketching matrix, and show the error bound for sketched unconstrained linear least squares, under the assumption that the sketching matrix is a $(1/2, \delta, \epsilon)$-accurate sketching matrix.
In \cref{subsec:general_analysis}, we show the error bound for sketched rank-constrained linear least squares.
In \cref{subsec:tensorsketch_background} and \cref{subsec:leverage_background}, we finish the proofs by giving the sketch size bounds that are sufficient for the TensorSketch matrix and leverage score sampling matrix to be the $(1/2, \delta, \epsilon)$-accurate sketching matrix, respectively.

\subsection{Error Bound for Sketched Unconstrained Linear Least Squares}\label{subsec:general_analysis_unconstrained}

We define the $(\gamma, \delta, \epsilon)$-accurate sketching matrix in \cref{def:accurate_skeching}. 
In \cref{thm:structure-ls}, we show the relative error bound for the unconstrained linear least squares problem with a $(1/2, \delta, \epsilon)$-accurate sketching matrix. 
By $\mat{Q}_P$ we denote a matrix whose columns form an orthonormal basis for the column space of $\mat{P}$.

\begin{definition}[$(\gamma, \delta, \epsilon)$-accurate Sketching Matrix]\label{def:accurate_skeching} A random matrix $\mat{S}\in\R^{m \times s}$ is a $(\gamma, \delta, \epsilon)$-accurate sketching matrix for $\mat{P}\in\R^{s\times R}$ if the following two conditions hold simultaneously.
\begin{enumerate}[leftmargin=*]
    \item With probability at least $1 - \delta/2$, each singular value $\sigma$ of $\mat{SQ}_P$ satisfies
    \begin{equation}\label{eq:structure1}
        1-\gamma\leq \sigma^2 \leq 1+\gamma.
    \end{equation}
    \item With probability at least $1 - \delta/2$, for any fixed matrix $\mat{B}$, we have
    \begin{equation}\label{eq:structure2}
        \|\mat{Q}_P^T\mat{S}^T \mat{SB} - \mat{Q}_P^T\mat{B}\|_F^2\leq \epsilon^2 \cdot \|\mat{B}\|_F^2.
    \end{equation}
\end{enumerate} 
\end{definition}

\begin{lemma}[Linear Least Squares with $(1/2, \delta, \epsilon)$-accurate Sketching Matrix~\cite{woodruff2014sketching,larsen2020practical,drineas2011faster}]
\label{thm:structure-ls}
Given a full-rank matrix $\mat{P}\in\R^{s\times R}$ with $s\geq R$, and $\mat{B}\in \mathbb{R}^{s\times n}$. 
Let $\mat{S}\in\R^{m\times s}$ be a $(1/2, \delta, \epsilon)$-accurate sketching matrix.
Let $\mat{B}^{\perp} = \mat{P}\mat{X}_{\text{opt}} - \mat{B}$, with $\mat{X}_{\text{opt}} = \arg\min_{\mat{X}} \left\|\mat{PX}-\mat{B}\right\|_F$, and $\mat{\widetilde{X}}_{\text{opt}} = \arg\min_{\mat{X}} \fnrm{\mat{SPX}-\mat{SB}}$.
Then the following approximation holds with probability at least $1-\delta$,
	\begin{equation}
\left\|\mat{P}\mat{\widetilde{X}}_{\text{opt}}- \mat{P}\mat{X}_{\text{opt}}\right\|_F^2	 
\leq \bigO{\epsilon^2} \left\|\mat{B}^{\perp}\right\|_F^2.
	\end{equation}	
\end{lemma}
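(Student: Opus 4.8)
The plan is to follow the standard normal-equations argument for sketched least squares, but carefully tracking the effect of the two conditions in \cref{def:accurate_skeching}. Let $\mat{Q}_P$ be an orthonormal basis for the column space of $\mat{P}$, so $\mat{P} = \mat{Q}_P \mat{R}$ for an invertible $\mat{R}\in\R^{R\times R}$; since multiplying on the right by $\mat{R}^{-1}$ is a bijection on the feasible set, it suffices to prove the bound with $\mat{P}$ replaced by $\mat{Q}_P$. Write $\mat{\widetilde{Y}}_{\text{opt}} = \mat{S}\mat{Q}_P \mat{\widetilde{X}}$ and note that, because $\mat{\widetilde{X}}$ minimizes $\fnrm{\mat{S}\mat{Q}_P\mat{X} - \mat{S}\mat{B}}$, the residual $\mat{S}\mat{Q}_P\mat{\widetilde{X}} - \mat{S}\mat{B}$ is orthogonal to the column space of $\mat{S}\mat{Q}_P$, i.e. $(\mat{S}\mat{Q}_P)^T(\mat{S}\mat{Q}_P\mat{\widetilde{X}} - \mat{S}\mat{B}) = \mat 0$. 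Since $\mat{B} = \mat{Q}_P\mat{X}_{\text{opt}} - \mat{B}^\perp$ and the original normal equations give $\mat{Q}_P^T\mat{B}^\perp = \mat 0$ (with $\mat{B}^\perp = \mat{Q}_P\mat{X}_{\text{opt}} - \mat{B}$), substitute to get
\begin{equation*}
(\mat{S}\mat{Q}_P)^T(\mat{S}\mat{Q}_P)(\mat{\widetilde{X}} - \mat{X}_{\text{opt}}) = -(\mat{S}\mat{Q}_P)^T \mat{S}\mat{B}^\perp = \mat{Q}_P^T\mat{B}^\perp - (\mat{S}\mat{Q}_P)^T\mat{S}\mat{B}^\perp,
\end{equation*}
where I added the zero term $\mat{Q}_P^T\mat{B}^\perp$ to make condition \eqref{eq:structure2} directly applicable.

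Next I would bound the two factors. Condition \eqref{eq:structure1} with $\gamma = 1/2$ says every singular value of $\mat{S}\mat{Q}_P$ lies in $[1/\sqrt2,\sqrt{3/2}]$, so $(\mat{S}\mat{Q}_P)^T(\mat{S}\mat{Q}_P)$ is invertible with smallest eigenvalue at least $1/2$; hence
\begin{equation*}
\fnrm{\mat{\widetilde{X}} - \mat{X}_{\text{opt}}} \leq 2\,\fnrm{\mat{Q}_P^T\mat{S}^T\mat{S}\mat{B}^\perp - \mat{Q}_P^T\mat{B}^\perp} \leq 2\epsilon\,\fnrm{\mat{B}^\perp}
\end{equation*}
on the intersection of the two good events, which by a union bound has probability at least $1-\delta$. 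Finally, since $\fnrm{\mat{Q}_P(\mat{\widetilde{X}} - \mat{X}_{\text{opt}})} = \fnrm{\mat{\widetilde{X}} - \mat{X}_{\text{opt}}}$ because $\mat{Q}_P$ has orthonormal columns, squaring gives $\fnrm{\mat{Q}_P\mat{\widetilde{X}} - \mat{Q}_P\mat{X}_{\text{opt}}}^2 \leq 4\epsilon^2 \fnrm{\mat{B}^\perp}^2 = \bigO{\epsilon^2}\fnrm{\mat{B}^\perp}^2$, and translating back through $\mat{R}^{-1}$ recovers the claimed bound for $\mat{P}$.

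The only slightly delicate point — and the step I expect to need the most care — is the application of condition \eqref{eq:structure2}: that condition holds for a \emph{fixed} matrix $\mat{B}$, whereas here I apply it to $\mat{B}^\perp$, which depends on $\mat{P}$ and $\mat{B}$ but \emph{not} on the random sketch $\mat{S}$, so fixedness is preserved and the condition applies verbatim. One should also double-check that the approximate-matrix-multiplication bound \eqref{eq:structure2} is stated with $\fnrm{\mat{B}}$ on the right rather than $\fnrm{\mat{Q}_P^T\mat{B}}$; using $\mat{B}^\perp$ in place of a generic right-hand side and noting $\fnrm{\mat{B}^\perp}\le\fnrm{\mat{B}}$ if needed keeps everything consistent, and since $\fnrm{\mat{B}^\perp}$ is exactly the optimal residual norm the final bound is genuinely a relative error bound. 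Everything else is routine linear algebra, and the whole argument is a black-box consequence of \cref{def:accurate_skeching}, which is why the lemma can be cited from \cite{woodruff2014sketching,larsen2020practical,drineas2011faster}.
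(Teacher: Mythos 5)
Your proposal is correct and follows essentially the same argument as the paper's proof: the sketched normal equations, the lower bound on $\sigma_{\min}^2(\mat{S}\mat{Q}_P)$ from condition \eqref{eq:structure1}, and the insertion of the zero term $\mat{Q}_P^T\mat{B}^\perp$ so that condition \eqref{eq:structure2} applies to the fixed matrix $\mat{B}^\perp$, followed by a union bound. The only cosmetic difference is that you reduce to the orthonormal case via $\mat{R}^{-1}$ while the paper carries $\mat{R}_P$ through the computation explicitly; your closing worry about $\fnrm{\mat{B}^\perp}$ versus $\fnrm{\mat{B}}$ is unnecessary, since \eqref{eq:structure2} is applied with $\mat{B}^\perp$ itself as the fixed right-hand matrix.
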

\begin{proof}
Define the reduced QR decomposition,
$\mat{P} = \mat{Q}_{P}\mat{R}_P$. 
The unconstrained sketched problem can be rewritten as
\begin{align*}
   \min_{\mat{X}} \left\|\mat{SPX}-\mat{SB}\right\|_F 
   &= 
\min_{\mat{X}} \left\|\mat{SPX}-\mat{S}(\mat{P}\mat{X}_{\text{opt}} + \mat{B}^{\perp})\right\|_F \\
&= 
\min_{\mat{X}} \left\|\mat{S}\mat{Q}_P\mat{R}_P(\mat{X}-\mat{X}_{\text{opt}}) -\mat{S}\mat{B}^{\perp}\right\|_F, 
\end{align*}
thus the optimality condition is
\begin{equation}\label{eq:opt_condition}
(\mat{S}\mat{Q}_P)^T\mat{S}\mat{Q}_P\mat{R}_P(\mat{\widetilde{X}}_{\text{opt}}-\mat{X}_{\text{opt}}) = (\mat{S}\mat{Q}_P)^T\mat{S}\mat{B}^{\perp}.   
\end{equation}
Based on \eqref{eq:structure1},\eqref{eq:structure2}, with probability at least $1 - \delta$,
both of the following hold,
\begin{gather}\label{eq:structure1_theorema1}
    \sigma_{\min}^2 (\mat{S}\mat{Q}_P) \geq 1 - \gamma = 1/2,\\
\label{eq:structure2_theorema1}
    \left\| \mat{Q}_P^T \mat{S}^T \mat{S} \mat{B}^{\perp} \right\|_F^2 
    = 
    \left\|\mat{Q}_P^T\mat{S}^T \mat{S}\mat{B}^{\perp} - \mat{Q}_P^T\mat{B}^{\perp}\right\|_F^2 \leq \epsilon^2 \cdot \left\|\mat{B}^{\perp}\right\|_F^2,
\end{gather}
where $\sigma_{\min} (\mat{S}\mat{Q}_P)$ is the singular value of $\mat{S}\mat{Q}_P$ with the smallest magnitude.
Combining \eqref{eq:opt_condition}, \eqref{eq:structure1_theorema1}, and \eqref{eq:structure2_theorema1}, we obtain
\begin{align*}
\left\|\mat{P}\mat{\widetilde{X}}_{\text{opt}}
-\mat{P}\mat{X}_{\text{opt}}\right\|_F^2
&=
\left\|\mat{R}_P\mat{\widetilde{X}}_{\text{opt}}
-\mat{R}_P\mat{X}_{\text{opt}}\right\|_F^2 \\
&\overset{\eqref{eq:structure1_theorema1}}{\leq}
4\left\|
(\mat{S}\mat{Q}_P)^T\mat{S}\mat{Q}_P\mat{R}_P(\mat{\widetilde{X}}_{\text{opt}}-\mat{X}_{\text{opt}})
\right\|_F^2 \\
&\overset{\eqref{eq:opt_condition}}{=}
4\left\|\mat{Q}_P^T \mat{S}^T \mat{S}\mat{B}^{\perp}\right\|_F^2 \\
&\overset{\eqref{eq:structure2_theorema1}}{\leq} 
4\epsilon^2  \cdot \left\|\mat{B}^{\perp}\right\|_F^2 = \bigO{\epsilon^2} \left\|\mat{B}^{\perp}\right\|_F^2.
\end{align*}
\end{proof}

\subsection{Error Bound for Sketched Rank-constrained Linear Least Squares}\label{subsec:general_analysis}
We show in \cref{thm:structure-rcls} that with at least $1-\delta$ probability, the relative residual norm error for the rank-constrained linear least squares with a $(1/2, \delta, \epsilon)$-accurate sketching matrix is  bounded by $\bigO{\epsilon}$.
We first state Mirsky’s Inequality below, which bounds the perturbation of singular values when the input matrix is perturbed. We direct readers to the reference for its proof. This bound will be used in \cref{thm:structure-rcls}.
\begin{lemma}[Mirsky’s Inequality for Perturbation of Singular Values~\cite{mirsky1960symmetric}]\label{thm:perturb-value-multiple}
Let $\mat{A}$ and $\mat{F}$  be arbitrary matrices (of the same size) where $\sigma_1 \geq \cdots \geq \sigma_n$ are the singular values of $\mat{A}$ and $\sigma_1' \geq \cdots \geq \sigma_n'$ are the singular values of $\mat{A} + \mat{F}$. Then 
\begin{equation}\label{eq:mirsky}
  \sum_{i=1}^n(\sigma_i - \sigma_i')^2 \leq \|\mat{F}\|_F^2.  
\end{equation}
\end{lemma}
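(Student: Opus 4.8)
The plan is to reduce \eqref{eq:mirsky} to von Neumann's trace inequality, after which only the standard polarization identity for the Frobenius inner product is needed. Writing $\mat{B} = \mat{A} + \mat{F}$, the first step is to expand
\begin{equation*}
\|\mat{F}\|_F^2 = \|\mat{A}\|_F^2 + \|\mat{B}\|_F^2 - 2\,\Tr(\mat{A}^T\mat{B}),
\end{equation*}
and substitute $\|\mat{A}\|_F^2 = \sum_i \sigma_i^2$ and $\|\mat{B}\|_F^2 = \sum_i (\sigma_i')^2$. Since $\sum_i(\sigma_i-\sigma_i')^2 = \sum_i\sigma_i^2 + \sum_i(\sigma_i')^2 - 2\sum_i\sigma_i\sigma_i'$, the entire lemma reduces to the single bound $\Tr(\mat{A}^T\mat{B}) \le \sum_i\sigma_i\sigma_i'$ with both singular-value sequences listed in \emph{decreasing} order.

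To prove that bound I would use the SVDs $\mat{A} = \mat{U}_A\mat{\Sigma}_A\mat{V}_A^T$, $\mat{B} = \mat{U}_B\mat{\Sigma}_B\mat{V}_B^T$ and cyclicity of the trace to write $\Tr(\mat{A}^T\mat{B}) = \sum_{i,j}\sigma_i\sigma_j'\,\mat{M}(i,j)$, where $\mat{M}(i,j) = \mat{P}(i,j)\mat{Q}(j,i)$ for the orthogonal matrices $\mat{P} = \mat{U}_A^T\mat{U}_B$ and $\mat{Q} = \mat{V}_B^T\mat{V}_A$. Cauchy--Schwarz against the orthonormal rows of $\mat{P}$ and columns of $\mat{Q}$ gives $\sum_j|\mat{M}(i,j)|\le 1$ and $\sum_i|\mat{M}(i,j)|\le 1$, so $|\mat{M}|$ is dominated entrywise by a doubly stochastic matrix; Birkhoff's theorem writes the latter as a convex combination of permutation matrices, and the rearrangement inequality bounds $\sum_{i,j}\sigma_i\sigma_j'\,\Pi(i,j)$ for each permutation $\Pi$ by $\sum_i\sigma_i\sigma_i'$ when the $\sigma$'s are sorted. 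Alternatively one can sidestep the trace inequality by passing to the symmetric dilation $\bigl(\begin{smallmatrix}\mat{0}&\mat{A}\\\mat{A}^T&\mat{0}\end{smallmatrix}\bigr)$, whose eigenvalues are $\pm\sigma_i$ together with extra zeros when $\mat{A}$ is rectangular, applying the Hoffman--Wielandt perturbation bound for symmetric matrices, and observing that the factor $2$ from doubling the spectrum cancels the factor $2$ in $\|\widetilde{\mat{F}}\|_F^2 = 2\|\mat{F}\|_F^2$.

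Combining the two ingredients gives $\|\mat{F}\|_F^2 \ge \sum_i\sigma_i^2 + \sum_i(\sigma_i')^2 - 2\sum_i\sigma_i\sigma_i' = \sum_i(\sigma_i-\sigma_i')^2$, which is \eqref{eq:mirsky}. The only genuinely nontrivial step is the trace inequality $\Tr(\mat{A}^T\mat{B})\le\sum_i\sigma_i\sigma_i'$ (equivalently, the Hoffman--Wielandt bound): it is precisely where the ordering of the singular values matters, since the conclusion fails under a mismatched pairing, whereas the surrounding manipulations are just the Frobenius polarization identity and $\|\cdot\|_F^2 = \sum_i\sigma_i^2$. As this is a classical fact, in the final write-up I would, like the paper, simply cite it rather than reproduce the Birkhoff/rearrangement argument.
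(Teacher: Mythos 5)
The paper does not actually prove this lemma---it is stated as a classical result and the reader is directed to Mirsky's original reference---so there is no in-paper argument to compare against; what matters is whether your sketch is a sound derivation, and it is. The reduction via the polarization identity $\|\mat{F}\|_F^2 = \|\mat{A}\|_F^2 + \|\mat{B}\|_F^2 - 2\Tr(\mat{A}^T\mat{B})$ correctly isolates von Neumann's trace inequality $\Tr(\mat{A}^T\mat{B}) \le \sum_i \sigma_i\sigma_i'$ as the only substantive step, and your proof of that step is the standard one: the Cauchy--Schwarz estimates show $|\mat{M}|$ is doubly \emph{sub}stochastic, it is then dominated entrywise by a doubly stochastic matrix (a standard fact, and harmless here since the weights $\sigma_i\sigma_j'$ are nonnegative so enlarging $|\mat{M}|$ only increases the bound), and Birkhoff plus rearrangement finishes. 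The alternative route through the Hermitian dilation and the Hoffman--Wielandt bound is also correct, including the bookkeeping that the sorted spectra $\{\pm\sigma_i\}\cup\{0,\dots,0\}$ of the two dilations pair up position-by-position so that the factor of $2$ cancels. Either route is a legitimate self-contained proof; your decision to cite the result in the final write-up, as the paper does, is reasonable given that the lemma plays only a supporting role in \cref{thm:structure-rcls}.
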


\begin{theorem}[Rank-constrained Linear Least Squares with $(1/2, \delta, \epsilon)$-accurate Sketching Matrix]
\label{thm:structure-rcls}
Given $\mat{P}\in\R^{s\times R}$ with orthonormal columns (such that $\mat{P} = \mat{Q}_P$), and $\mat{B}\in \mathbb{R}^{s\times n}$.
Let $\mat{S}\in\R^{m\times s}$ be a $(1/2, \delta, \epsilon)$-accurate sketching matrix.
Let $\mat{\widetilde{X}}_{\text{r}}$ be the best rank-$r$ approximation of the solution of the problem
$\min_{\mat{X}} \fnrm{\mat{SPX}-\mat{SB}}$, and let $\mat{X}_{\text{r}} = \arg\min_{\mat{X},\text{rank}(\mat{X})=r} \left\|\mat{PX}-\mat{B}\right\|_F$.
Then the residual norm error bound,
	\begin{equation}
\left\|\mat{P}\mat{\widetilde{X}}_{\text{r}}- \mat{B}\right\|_F^2	 
\leq \left(1+\bigO{\epsilon}\right) \Big\|\mat{P}\mat{X}_{\text{r}}- \mat{B}\Big\|_F^2	,
	\end{equation}	
holds with probability at least $1-\delta$.
\end{theorem}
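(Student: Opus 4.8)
The plan is to use that $\mat{P}$ has orthonormal columns to turn both the constrained problem and its sketched counterpart into plain rank-$r$ approximation problems, and then to transport the perturbation bound of \cref{thm:structure-ls} across truncation using Mirsky's inequality. First I would record the Pythagorean splitting: since $\mat{P} = \mat{Q}_P$ has orthonormal columns, for every matrix $\mat{X}$ one has $\fnrm{\mat{PX} - \mat{B}}^2 = \fnrm{\mat{X} - \mat{P}^T\mat{B}}^2 + \fnrm{\mat{B}^\perp}^2$, where $\mat{B}^\perp := \mat{PP}^T\mat{B} - \mat{B}$. Writing $\mat{X}_{\text{opt}} := \mat{P}^T\mat{B}$, the optimal constrained solution $\mat{X}_{\text{r}}$ is exactly the best rank-$r$ approximation of $\mat{X}_{\text{opt}}$ (Eckart--Young), and both residuals split as $\fnrm{\mat{P}\mat{X}_{\text{r}} - \mat{B}}^2 = \fnrm{\mat{X}_{\text{r}} - \mat{X}_{\text{opt}}}^2 + \fnrm{\mat{B}^\perp}^2$ and $\fnrm{\mat{P}\mat{\widetilde{X}}_{\text{r}} - \mat{B}}^2 = \fnrm{\mat{\widetilde{X}}_{\text{r}} - \mat{X}_{\text{opt}}}^2 + \fnrm{\mat{B}^\perp}^2$. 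Since $\fnrm{\mat{B}^\perp} \le \fnrm{\mat{P}\mat{X}_{\text{r}} - \mat{B}}$, it suffices to prove $\fnrm{\mat{\widetilde{X}}_{\text{r}} - \mat{X}_{\text{opt}}}^2 \le \fnrm{\mat{X}_{\text{r}} - \mat{X}_{\text{opt}}}^2 + \bigO{\epsilon}\,\fnrm{\mat{P}\mat{X}_{\text{r}} - \mat{B}}^2$.

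Next I would invoke \cref{thm:structure-ls}: since $\mat{S}$ is $(1/2, \delta, \epsilon)$-accurate and $\mat{P}$ has orthonormal columns, with probability at least $1 - \delta$ the sketched unconstrained solution $\mat{\widetilde{X}}_{\text{opt}}$ satisfies $\fnrm{\mat{\widetilde{X}}_{\text{opt}} - \mat{X}_{\text{opt}}}^2 = \fnrm{\mat{P}\mat{\widetilde{X}}_{\text{opt}} - \mat{P}\mat{X}_{\text{opt}}}^2 \le \bigO{\epsilon^2}\,\fnrm{\mat{B}^\perp}^2$; set $\mat{E} := \mat{\widetilde{X}}_{\text{opt}} - \mat{X}_{\text{opt}}$, so $\fnrm{\mat{E}} \le \bigO{\epsilon}\,\fnrm{\mat{B}^\perp}$. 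Applying Mirsky's inequality (\cref{thm:perturb-value-multiple}) to $\mat{X}_{\text{opt}}$ and $\mat{X}_{\text{opt}} + \mat{E} = \mat{\widetilde{X}}_{\text{opt}}$ gives $\sum_i \bigl(\sigma_i(\mat{\widetilde{X}}_{\text{opt}}) - \sigma_i(\mat{X}_{\text{opt}})\bigr)^2 \le \fnrm{\mat{E}}^2$. Since $\fnrm{\mat{\widetilde{X}}_{\text{r}} - \mat{\widetilde{X}}_{\text{opt}}} = \bigl(\sum_{i>r}\sigma_i(\mat{\widetilde{X}}_{\text{opt}})^2\bigr)^{1/2}$ and $\fnrm{\mat{X}_{\text{r}} - \mat{X}_{\text{opt}}} = \bigl(\sum_{i>r}\sigma_i(\mat{X}_{\text{opt}})^2\bigr)^{1/2}$, the triangle inequality for the $\ell_2$-norm of the singular-value tail gives $\fnrm{\mat{\widetilde{X}}_{\text{r}} - \mat{\widetilde{X}}_{\text{opt}}} \le \fnrm{\mat{X}_{\text{r}} - \mat{X}_{\text{opt}}} + \fnrm{\mat{E}}$. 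A further triangle inequality, $\fnrm{\mat{\widetilde{X}}_{\text{r}} - \mat{X}_{\text{opt}}} \le \fnrm{\mat{\widetilde{X}}_{\text{r}} - \mat{\widetilde{X}}_{\text{opt}}} + \fnrm{\mat{E}} \le \fnrm{\mat{X}_{\text{r}} - \mat{X}_{\text{opt}}} + 2\fnrm{\mat{E}}$, squared and combined with the crude bounds $\fnrm{\mat{X}_{\text{r}} - \mat{X}_{\text{opt}}} \le \fnrm{\mat{P}\mat{X}_{\text{r}} - \mat{B}}$ and $\fnrm{\mat{B}^\perp} \le \fnrm{\mat{P}\mat{X}_{\text{r}} - \mat{B}}$, absorbs the cross term and the $\fnrm{\mat{E}}^2$ term into $\bigO{\epsilon}\,\fnrm{\mat{P}\mat{X}_{\text{r}} - \mat{B}}^2$, which completes the reduction. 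Adding back $\fnrm{\mat{B}^\perp}^2$ yields the $(1+\bigO{\epsilon})$ residual bound, and the success probability is exactly that of \cref{thm:structure-ls}, the only randomness in the argument.

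The hard part is the truncation step. One cannot compare $\mat{\widetilde{X}}_{\text{r}}$ and $\mat{X}_{\text{r}}$ directly, since $\mat{\widetilde{X}}_{\text{r}}$ is the rank-$r$ truncation of the \emph{perturbed} matrix $\mat{\widetilde{X}}_{\text{opt}}$ rather than of $\mat{X}_{\text{opt}}$; routing the comparison through the $\ell_2$-norm of the singular-value tails via Mirsky's inequality is what makes it work. A secondary subtlety is the bookkeeping of powers of $\epsilon$: the cross term above is only $\bigO{\epsilon}$-relative, not $\bigO{\epsilon^2}$, which is precisely why the unconstrained sketch error must be pushed down to the $\bigO{\epsilon^2}$-relative level, and hence why the sketch-size bounds of \cref{thm:tensorsketch-rcls} and \cref{thm:leverage-rcls} carry an extra $\bigO{1/\epsilon}$ factor relative to the unconstrained case.
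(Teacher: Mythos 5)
Your proposal is correct and follows essentially the same route as the paper's proof: the Pythagorean splitting induced by the orthonormal columns of $\mat{P}$, the reduction to the unconstrained sketched error via \cref{thm:structure-ls} at the $\bigO{\epsilon^2}$ level, and Mirsky's inequality (\cref{thm:perturb-value-multiple}) to control the perturbation of the trailing singular values under truncation. The only difference is bookkeeping: you absorb the cross terms with two triangle inequalities on the $\ell_2$-norms of the singular-value tails, where the paper expands $\lVert\mat{P}\mat{\widetilde{X}}_{\text{r}}-\mat{B}\rVert_F^2$ around $\mat{P}\mat{\widetilde{X}}_{\text{opt}}$ and bounds the resulting inner-product and $2\sigma_i\,\delta\sigma_i$ terms explicitly via Cauchy--Schwarz and AM--GM, arriving at the same $\bigO{\epsilon}\mathcal{R}^2$ slack.
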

\begin{proof}
Let $\mathcal{R} = \left\|\mat{P}\mat{X}_{\text{r}}
- \mat{B}\right\|_F$. In addition, let  $\mat{X}_{\text{opt}} = \arg\min_{\mat{X}} \left\|\mat{PX}-\mat{B}\right\|_F$ be the optimum solution of the unconstrained linear least squares problem.
Since the residual in the true solution for each component of the least-squares problem (column of $\mat{B}^{\perp}$) is orthogonal to the error due to low rank approximation,
\begin{align}
    \mathcal{R}^2 = \left\|\mat{P}\mat{X}_{\text{r}}
- \mat{B}\right\|^2_F &= \left\|\mat{P}\mat{X}_{\text{opt}}
- \mat{B}\right\|^2_F + \left\|\mat{P}\mat{X}_{\text{r}}
- \mat{P}\mat{X}_{\text{opt}}\right\|^2_F \nonumber\\
& = \left\|\mat{B}^{\perp}\right\|_F^2 + \left\|\mat{X}_{\text{r}}
- \mat{X}_{\text{opt}}\right\|^2_F.
\end{align}
The last equality holds since $\mat{P}$ has orthonormal columns. 
Let $\mat{\widetilde{X}}_{\text{opt}} = \arg\min_{\mat{X}} \left\|\mat{SPX}-\mat{SB}\right\|_F$ be the optimum solution of the unconstrained sketched problem. We have
\begin{align}
    \left\|\mat{P}\mat{\widetilde{X}}_{\text{r}}- \mat{B}\right\|_F^2	
    &= \left\|\mat{P}\mat{\widetilde{X}}_{\text{r}}- \mat{P}\mat{\widetilde{X}}_{\text{opt}}\right\|_F^2	+ \left\|\mat{P}\mat{\widetilde{X}}_{\text{opt}}- \mat{B}\right\|_F^2
    + 2 \left\langle \mat{P}\mat{\widetilde{X}}_{\text{r}}- \mat{P}\mat{\widetilde{X}}_{\text{opt}},
    \mat{P}\mat{\widetilde{X}}_{\text{opt}}-\mat{B}
    \right\rangle_F \notag\\
    \label{eq:decompose_error}
&= \left\|\mat{\widetilde{X}}_{\text{r}}-\mat{\widetilde{X}}_{\text{opt}}\right\|_F^2	+ 
\left\|\mat{\widetilde{X}}_{\text{opt}}
-\mat{X}_{\text{opt}}\right\|_F^2
+ \left\|\mat{B}^{\perp}\right\|_F^2
    + 2 \left\langle \mat{\widetilde{X}}_{\text{r}}- \mat{\widetilde{X}}_{\text{opt}},
    \mat{\widetilde{X}}_{\text{opt}}-\mat{X}_{\text{opt}}
    \right\rangle_F.
\end{align}
Next we bound the magnitudes of the first, second and the fourth terms.
According to \cref{thm:structure-ls}, with probability at least $1-\delta$, the second term in \eqref{eq:decompose_error} can be bounded as
\begin{equation}\label{eq:fullrank_error}
  \left\|\mat{\widetilde{X}}_{\text{opt}}
-\mat{X}_{\text{opt}}\right\|_F^2
=
\left\|\mat{P}\mat{\widetilde{X}}_{\text{opt}}
-\mat{P}\mat{X}_{\text{opt}}\right\|_F^2
\leq C\epsilon^2 \left\|\mat{B}^{\perp}\right\|_F^2,
\end{equation}
for some constant $C\geq 1$.
Suppose $ \mat{\widetilde{X}}_{\text{opt}}$ has singular values $\widetilde{\sigma}_i = \sigma_i + \delta \sigma_i$ for $i$ in $\{1, \ldots, \min(R,n)\}$, where $\sigma_i$ are the singular values of
$\mat{X}_{\text{opt}}$.
Since $\mat{\widetilde{X}}_{\text{r}}$ is defined to be the best low rank approximation to $\mat{\widetilde{X}}_\text{opt}$, we have
\begin{equation}\label{eq:lowrank_error}
    \left\|\mat{\widetilde{X}}_{\text{r}}- \mat{\widetilde{X}}_{\text{opt}}\right\|_F^2
    = \sum_{i=r+1}^{\min(R,n)}\widetilde{\sigma}_i^2 =  
    \sum_{i=r+1}^{\min(R,n)}(\sigma_i + \delta \sigma_i)^2
    = \sum_{i=r+1}^{\min(R,n)}\left(\sigma_i^2 + \delta \sigma_i^2
    + 2\sigma_i\delta \sigma_i
    \right).
\end{equation}
Since $\mat{P}$ has orthonormal columns, $\mat{X}_{\text{r}}$ is the best rank-$r$ approximation of $\mat{X}_{\text{opt}}$,
\[
\sum_{i=r+1}^{\min(R,n)}\sigma_i^2 = \left\|\mat{X}_{\text{r}}
- \mat{X}_{\text{opt}}\right\|^2_F .
\]
In addition, based on Mirsky's inequality (\cref{thm:perturb-value-multiple}), 
\begin{equation}\label{eq:delta_sigma}
  \sum_{i=r+1}^{\min(R,n)}\delta \sigma_i^2 
\leq \sum_{i=1}^{\min(R,n)}\delta \sigma_i^2 \overset{\eqref{eq:mirsky}}{\leq}  \left\|\mat{\widetilde{X}}_{\text{opt}}
-\mat{X}_{\text{opt}}\right\|_F^2
\overset{\eqref{eq:fullrank_error}}{\leq}  C\epsilon^2 \left\|\mat{B}^{\perp}\right\|_F^2,  
\end{equation}
and
\begin{align*}
  \sum_{i=r+1}^{\min(R,n)}  \left|2\sigma_i\delta \sigma_i\right|
&= \epsilon\sum_{i=r+1}^{\min(R,n)}  \left|2\sigma_i \frac{\delta \sigma_i}{\epsilon}\right| 
 \leq 
\epsilon \sum_{i=r+1}^{\min(R,n)} \left(
\sigma_i^2 + \frac{\delta \sigma_i^2}{\epsilon^2}
\right) \\
 &\overset{\eqref{eq:delta_sigma}}{\leq}  C\epsilon\left(\left\|\mat{X}_{\text{r}}
- \mat{X}_{\text{opt}}\right\|^2_F + \left\|\mat{B}^{\perp}\right\|_F^2
\right) = C\epsilon \mathcal{R}^2,  
\end{align*}
thus
\eqref{eq:lowrank_error} can be bounded as
\begin{align}\label{eq:lowrank_error_bound}
    \left\|\mat{\widetilde{X}}_{\text{r}}- \mat{\widetilde{X}}_{\text{opt}}\right\|_F^2 &\leq 
\left\|\mat{X}_{\text{r}}
- \mat{X}_{\text{opt}}\right\|^2_F + C\epsilon^2 \left\|\mat{B}^{\perp}\right\|_F^2 + C\epsilon \mathcal{R}^2 \nonumber\\
&= \left\|\mat{X}_{\text{r}}
- \mat{X}_{\text{opt}}\right\|^2_F + \bigO{\epsilon}\mathcal{R}^2.
\end{align}
Next we bound the magnitude of the inner product term in \eqref{eq:decompose_error},
\begin{align}
        \left|\left\langle \mat{\widetilde{X}}_{\text{r}}- \mat{\widetilde{X}}_{\text{opt}},
    \mat{\widetilde{X}}_{\text{opt}}-\mat{X}_{\text{opt}}
    \right\rangle_F\right|
    & \leq 
    \left\|\mat{\widetilde{X}}_{\text{r}}-\mat{\widetilde{X}}_{\text{opt}}\right\|_F	\left\|\mat{\widetilde{X}}_{\text{opt}}
    -\mat{X}_{\text{opt}}\right\|_F \nonumber\\
    & \overset{\eqref{eq:fullrank_error}}{\leq}  \sqrt{C}\epsilon \left\|\mat{\widetilde{X}}_{\text{r}}-\mat{\widetilde{X}}_{\text{opt}}\right\|_F\|\mat{B}^{\perp}\|_F \nonumber\\
    &    \leq 
    \sqrt{C}\frac{\epsilon}{2}
    \left(\left\|\mat{\widetilde{X}}_{\text{r}}-\mat{\widetilde{X}}_{\text{opt}}\right\|_F^2 + 
    \left\|\mat{B}^{\perp}\right\|_F^2
    \right) \nonumber\\
    & \overset{\eqref{eq:lowrank_error_bound}}{\leq}     \sqrt{C}\frac{\epsilon}{2}
    \left(
 \left\|\mat{X}_{\text{r}}
- \mat{X}_{\text{opt}}\right\|^2_F + \bigO{\epsilon}\mathcal{R}^2
    + 
    \left\|\mat{B}^{\perp}\right\|_F^2
    \right) \nonumber \\
    &     = \bigO{\epsilon}\mathcal{R}^2.
    \label{eq:cross_term_bound}
\end{align}
Therefore, based on \eqref{eq:decompose_error},\eqref{eq:fullrank_error},\eqref{eq:lowrank_error_bound},\eqref{eq:cross_term_bound}, with probability at least $1-\delta$,
\[
\left\|\mat{P}\mat{\widetilde{X}}_{\text{r}}- \mat{B}\right\|_F^2	 
\leq \left(1+\bigO{\epsilon}\right)\mathcal{R}^2
= \left(1+\bigO{\epsilon}\right) \Big\|\mat{P}\mat{X}_{\text{r}}- \mat{B}\Big\|_F^2.
\]
\end{proof}

\subsection{TensorSketch for Unconstrained \& Rank-constrained Least Squares}
\label{subsec:tensorsketch_background}

In this section, we first give the sketch size bound that is sufficient for the TensorSketch matrix to be the $(1/2, \delta, \epsilon)$-accurate sketching matrix in \cref{lem:structure_tensorsketch}. The proof is based on \cref{lam:matmul_tensorsketch} and \cref{lam:singular_tensorsketch}, which follows from results derived in previous work~\cite{avron2016sharper,diao2018sketching}.
Lemma~\ref{lam:matmul_tensorsketch} bounds the sketch size sufficient to reach certain matrix multiplication accuracy, while Lemma~\ref{lam:singular_tensorsketch} bounds the singular values of the matrix obtained from applying TensorSketch to a matrix with orthonormal columns. 
We direct readers to prior work for a detailed proof of \cref{lam:matmul_tensorsketch}, but provide a simple proof of \cref{lam:singular_tensorsketch} by application of \cref{lam:matmul_tensorsketch}.

\begin{lemma}[Approximate Matrix Multiplication with TensorSketch~\cite{avron2016sharper}]\label{lam:matmul_tensorsketch}
Given matrices $\mat{P}\in\R^{s^{N-1}\times R^{N-1}}$ and $\mat{B}\in \mathbb{R}^{s^{N-1}\times n}$.
Let $\mat{S}\in\R^{m \times s^{N-1}}$ be an order $N-1$ TensorSketch matrix.
For $m \geq (2 + 3^{N-1})/(\epsilon^2\delta) $,
the approximation,
\[
\|\mat{P}^T\mat{S}^T \mat{SB} - \mat{P}^T\mat{B}\|_F^2\leq \epsilon^2 \cdot \|\mat{P}\|_F^2\cdot \|\mat{B}\|_F^2,
\]
holds with probability at least $1-\delta$.
\end{lemma}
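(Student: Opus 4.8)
The plan is to prove \cref{lam:matmul_tensorsketch} by a second-moment (JL-moment) computation followed by Markov's inequality, the standard template for approximate matrix multiplication guarantees. Write $\mat{E} := \mat{P}^T\mat{S}^T\mat{S}\mat{B} - \mat{P}^T\mat{B} = \mat{P}^T(\mat{S}^T\mat{S} - \mat{I})\mat{B}$. Since the squared Frobenius norm is the sum of squared entries, $\fnrm{\mat{E}}^2 = \sum_{i,j}\big(\mat{P}(:,i)^T(\mat{S}^T\mat{S} - \mat{I})\mat{B}(:,j)\big)^2$, so it suffices to bound $\E[\fnrm{\mat{E}}^2]$, and for that it suffices to bound the per-pair second moment $\E\big[(\vcr{p}^T(\mat{S}^T\mat{S} - \mat{I})\vcr{q})^2\big]$ for arbitrary fixed $\vcr{p},\vcr{q}\in\R^{s^{N-1}}$, then sum over all column pairs of $\mat{P}$ and $\mat{B}$ using $\sum_i\vnrm{\mat{P}(:,i)}^2 = \fnrm{\mat{P}}^2$ and $\sum_j\vnrm{\mat{B}(:,j)}^2 = \fnrm{\mat{B}}^2$.

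First I would record the unbiasedness $\E[\mat{S}^T\mat{S}] = \mat{I}$. Indexing the columns of $\mat{S}$ by multi-indices in $[s]^{N-1}$, the $(\mathbf{u},\mathbf{v})$ entry of $\mat{S}^T\mat{S}$ equals $S(\mathbf{u})S(\mathbf{v})\cdot[H(\mathbf{u}) = H(\mathbf{v})]$; because $S(\mathbf{u}) = \prod_n S_n(u_n)$ with the $S_n$ independent across $n$ and each at least $2$-wise independent, $\E[S(\mathbf{u})S(\mathbf{v})] = \prod_n [u_n = v_n] = [\mathbf{u} = \mathbf{v}]$, which annihilates every off-diagonal entry while the diagonal entries are $1$. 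Hence each entry of $\mat{E}$ has mean zero and its second moment is a variance. The crux is then the combinatorial estimate
\[
  \E\big[(\vcr{p}^T(\mat{S}^T\mat{S} - \mat{I})\vcr{q})^2\big] \;\le\; \frac{2 + 3^{\,N-1}}{m}\,\vnrm{\vcr{p}}^2\,\vnrm{\vcr{q}}^2 .
\]
It is obtained by expanding the square as a sum over quadruples of multi-indices, grouping terms by the coincidences induced by the additive hash $H(\cdot) = \big(\sum_n (H_n(\cdot)-1) \bmod m\big) + 1$, and exploiting that each $H_n$ is $3$-wise independent and each $S_n$ is $4$-wise independent so that the surviving cross terms cancel or are suitably small (Cauchy--Schwarz absorbs any residual $(\vcr{p}^T\vcr{q})^2$ into the constant). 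The $3^{N-1}$ factor comes out of a recursive bound over the $N-1$ coordinate hashes; one concrete way to organize this is to use that $\mat{S}$ maps a rank-one tensor $\mathbf{e}_{i_1}\otimes\cdots\otimes\mathbf{e}_{i_{N-1}}$ to the circular convolution $\mat{S}^{(1)}\mathbf{e}_{i_1} \ast \cdots \ast \mat{S}^{(N-1)}\mathbf{e}_{i_{N-1}}$ of the component CountSketches, which casts the moment as a product-type quantity over the $N-1$ modes and permits induction on $N$. Summing the per-pair bound over all column pairs yields $\E[\fnrm{\mat{E}}^2] \le \frac{2 + 3^{N-1}}{m}\fnrm{\mat{P}}^2\fnrm{\mat{B}}^2$.

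It remains to apply Markov's inequality to the nonnegative random variable $\fnrm{\mat{E}}^2$: whenever $m \ge (2 + 3^{N-1})/(\epsilon^2\delta)$,
\[
  \Pr\!\big[\fnrm{\mat{E}}^2 > \epsilon^2\,\fnrm{\mat{P}}^2\,\fnrm{\mat{B}}^2\big] \;\le\; \frac{\E[\fnrm{\mat{E}}^2]}{\epsilon^2\,\fnrm{\mat{P}}^2\,\fnrm{\mat{B}}^2} \;\le\; \frac{2 + 3^{N-1}}{m\,\epsilon^2} \;\le\; \delta ,
\]
which is exactly the claimed statement.

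The main obstacle is the middle step: pinning down the variance with the explicit constant $2 + 3^{N-1}$. Unlike a fully random CountSketch, the composed hash $H$ has only \emph{limited independence}, so the collision probabilities of triples and quadruples of multi-indices are not simply $1/m$ or $1/m^2$; controlling them requires a careful case analysis of how coordinatewise equalities propagate through the $H_n$, together with the cancellations furnished by the $4$-wise independent signs $S_n$, and it is this analysis that produces the extra $3^{N-1}$ relative to the plain CountSketch constant. This is precisely why the statement is attributed to prior work (Avron et al.~\cite{avron2016sharper}, building on Pagh~\cite{pagh2013compressed}); once the per-pair variance bound is granted, the reduction and the Markov step above are routine.
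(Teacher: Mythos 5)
The paper offers no proof of this lemma at all---it is imported verbatim from the cited reference, and the text explicitly directs readers to prior work for the argument. Your proposal is a correct reconstruction of how that reference proves it: reduce $\fnrm{\mat{P}^T\mat{S}^T\mat{S}\mat{B}-\mat{P}^T\mat{B}}^2$ to a sum of per-column-pair terms, establish unbiasedness from the $2$-wise independence of the signs, bound the per-pair second moment by $\frac{2+3^{N-1}}{m}\vnrm{\vcr{p}}^2\vnrm{\vcr{q}}^2$, sum, and finish with Markov. The reduction, the summation over column pairs, and the Markov step are all correct as written, and you correctly carry the order $N-1$ (rather than $N$) through to the constant $2+3^{N-1}$. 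The one substantive caveat is that the heart of the matter---the second-moment bound with that explicit constant under the limited independence of the composed hash $H$ and the product signs $S$---is only sketched, not proved; you acknowledge this and defer it to Avron et al., which is exactly what the paper itself does for the entire lemma. So your write-up is an accurate and correctly assembled roadmap whose only unproven ingredient coincides with the content of the citation; it neither adds to nor deviates from the treatment the paper relies on.
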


\begin{lemma}[Singular Value Bound for TensorSketch~\cite{diao2018sketching}]\label{lam:singular_tensorsketch}
Given a full-rank matrix $\mat{P}\in\R^{s^{N-1}\times R^{N-1}}$ with $s>R$, and $\mat{B}\in \mathbb{R}^{s^{N-1}\times n}$.
Let $\mat{S}\in\R^{m \times s^{N-1}}$ be an order $N-1$ TensorSketch matrix.
For $m \geq R^{2(N-1)}(2 + 3^{N-1})/(\gamma^2\delta) $,
each singular value $\sigma$ of $\mat{SQ}_P$ satisfies
\[
        1-\gamma\leq \sigma^2 \leq 1+\gamma
\]
with probability at least $1-\delta$, where  $\mat{Q}_P$ is an orthonormal basis for the column space of $\mat{P}$.
\end{lemma}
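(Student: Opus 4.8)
The plan is to obtain this singular‑value bound as a short reduction to the approximate matrix multiplication guarantee of \cref{lam:matmul_tensorsketch}, applied with the roles of \emph{both} input matrices played by $\mat{Q}_P$. Since $\mat{Q}_P\in\R^{s^{N-1}\times R^{N-1}}$ has orthonormal columns, $\mat{Q}_P^T\mat{Q}_P=\mat{I}$ and $\fnrm{\mat{Q}_P}^2=R^{N-1}$ (the number of columns). Invoking \cref{lam:matmul_tensorsketch} with $\mat{P}$ and $\mat{B}$ both taken to be $\mat{Q}_P$, and with accuracy parameter $\epsilon_0:=\gamma/R^{N-1}$, we get that, as long as $m\geq(2+3^{N-1})/(\epsilon_0^2\delta)=R^{2(N-1)}(2+3^{N-1})/(\gamma^2\delta)$ — which is precisely the sketch‑size hypothesis of the lemma — the bound $\fnrm{\mat{Q}_P^T\mat{S}^T\mat{S}\mat{Q}_P-\mat{I}}^2\leq\epsilon_0^2\fnrm{\mat{Q}_P}^2\fnrm{\mat{Q}_P}^2=\epsilon_0^2R^{2(N-1)}=\gamma^2$ holds with probability at least $1-\delta$. (The matrix $\mat{B}$ appearing in the lemma statement plays no role in the conclusion; it is vestigial.)

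Next I would translate the Frobenius‑norm bound into a statement about singular values. The matrix $\mat{M}:=\mat{Q}_P^T\mat{S}^T\mat{S}\mat{Q}_P=(\mat{S}\mat{Q}_P)^T(\mat{S}\mat{Q}_P)$ is symmetric positive semidefinite, and its eigenvalues are exactly the squared singular values $\sigma^2$ of $\mat{S}\mat{Q}_P$. By Weyl's inequality, or simply because $|\lambda_i(\mat{M})-1|\leq\tnrm{\mat{M}-\mat{I}}$ for every eigenvalue $\lambda_i(\mat{M})$, each squared singular value lies within $\tnrm{\mat{M}-\mat{I}}$ of $1$; and since $\tnrm{\mat{M}-\mat{I}}\leq\fnrm{\mat{M}-\mat{I}}\leq\gamma$ on the event above, we conclude $1-\gamma\leq\sigma^2\leq1+\gamma$ for all singular values $\sigma$ of $\mat{S}\mat{Q}_P$, with probability at least $1-\delta$. (Note that $m\geq R^{2(N-1)}\geq R^{N-1}$, so $\mat{S}\mat{Q}_P$ has $R^{N-1}$ singular values, all of which are covered by this argument.)

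There is no deep obstacle here; the proof is essentially bookkeeping. The single point that deserves attention — and the reason the sketch size carries an extra $R^{2(N-1)}$ factor compared with the bare matrix‑multiplication guarantee — is that \cref{lam:matmul_tensorsketch} scales its error by $\fnrm{\mat{P}}\fnrm{\mat{B}}$, which here equals $R^{N-1}$ rather than $1$; to absorb this scaling and still land an operator‑norm perturbation of size $\gamma$, one must shrink the accuracy parameter by a factor $R^{N-1}$, costing a factor $R^{2(N-1)}$ in $m$. The only remaining facts to verify are the standard ones that the squared singular values of $\mat{S}\mat{Q}_P$ are the eigenvalues of $\mat{M}$ and that $\tnrm{\cdot}\leq\fnrm{\cdot}$, both of which are immediate.
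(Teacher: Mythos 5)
Your proposal is correct and follows essentially the same route as the paper's proof: apply \cref{lam:matmul_tensorsketch} with both matrices equal to $\mat{Q}_P$ at accuracy $\gamma/R^{N-1}$, obtain $\fnrm{\mat{Q}_P^T\mat{S}^T\mat{S}\mat{Q}_P-\mat{I}}\leq\gamma$, and pass to the operator norm to bound the squared singular values. Your added remarks on where the $R^{2(N-1)}$ factor comes from and on the eigenvalue identification are accurate but not points of divergence.
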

\begin{proof}
Since $\mat{Q}_P$ is an orthonormal basis for $\mat{P}$, $\mat{Q}^T_P\mat{Q}_P=\mat{I}$, and $\|\mat{Q}_P\|_F^2 = R^{N-1}$. Based on \cref{lam:matmul_tensorsketch}, for $m \geq R^{2(N-1)}(2 + 3^{N-1})/(\gamma^2\delta)$, with probability at least $1 - \delta$, we have
\[
\left\|\mat{Q}_P^T\mat{S}^T \mat{S}\mat{Q}_P - \mat{Q}_P^T\mat{Q}_P\right\|_F^2
= 
\left\|\mat{Q}_P^T\mat{S}^T \mat{S}\mat{Q}_P - \mat{I}\right\|_F^2
\leq \frac{\gamma^2}{R^{2(N-1)}} \cdot \|\mat{Q}_P\|_F^4 = \gamma^2.
\]
Therefore, 
\[
\left\|\mat{Q}_P^T\mat{S}^T \mat{S}\mat{Q}_P - \mat{I}\right\|_2 \leq 
\left\|\mat{Q}_P^T\mat{S}^T \mat{S}\mat{Q}_P - \mat{I}\right\|_F \leq \gamma,
\]
which means the singular values of $\mat{SQ}_P$ satisfy $1-\gamma\leq \sigma^2 \leq 1+\gamma$.
\end{proof}

The previous two lemmas can be combined to demonstrate that the TensorSketch matrix provides an accurate sketch within our analytical framework.

\begin{lemma}[$(1/2, \delta, \epsilon)$-accurate TensorSketch Matrix]\label{lem:structure_tensorsketch}
Given the sketch size,
\[m= \bigO{(R^{(N-1)} \cdot 3^{N-1})/\delta \cdot (R^{(N-1)}  + 1/\epsilon^2)},\]
an order $N-1$ TensorSketch matrix $\mat{S}\in\R^{m \times s^{N-1}}$ is a $(1/2, \delta, \epsilon)$-accurate sketching matrix 
for any full rank matrix
$\mat{P}\in\R^{s^{N-1}\times R^{N-1}}$.
\end{lemma}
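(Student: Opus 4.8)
The plan is to verify, one at a time, the two defining properties of a $(1/2,\delta,\epsilon)$-accurate sketching matrix from \cref{def:accurate_skeching}, deriving a sketch-size requirement for each from the two TensorSketch lemmas already proved, and then to take $m$ to exceed both. Note that \cref{def:accurate_skeching} asks that \eqref{eq:structure1} hold with probability at least $1-\delta/2$ and, \emph{separately}, that \eqref{eq:structure2} hold with probability at least $1-\delta/2$; hence no union bound is needed inside the proof — I simply invoke each lemma with failure probability $\delta/2$.

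For the singular-value condition I would apply \cref{lam:singular_tensorsketch} with $\gamma = 1/2$ and failure probability $\delta/2$. Since $\gamma^2 = 1/4$ is an absolute constant, the hypothesis $m \geq R^{2(N-1)}(2+3^{N-1})/(\gamma^2 \cdot \delta/2)$ reduces to $m = \bigO{R^{2(N-1)}\, 3^{N-1}/\delta}$, and under this bound every singular value $\sigma$ of $\mat{SQ}_P$ obeys $1/2 \leq \sigma^2 \leq 3/2$ with probability at least $1-\delta/2$, which is exactly \eqref{eq:structure1} with $\gamma=1/2$.

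For the approximate-multiplication condition I would apply \cref{lam:matmul_tensorsketch} with $\mat{P}$ replaced by an orthonormal basis $\mat{Q}_P$ of the column space of $\mat{P}$, so that $\|\mat{Q}_P\|_F^2 = R^{N-1}$, and with the accuracy parameter rescaled to $\epsilon' = \epsilon/\sqrt{R^{N-1}}$ so that the factor $(\epsilon')^2\|\mat{Q}_P\|_F^2$ on the right-hand side of that lemma collapses to $\epsilon^2$. Taking failure probability $\delta/2$, the hypothesis $m \geq (2+3^{N-1})/((\epsilon')^2 \cdot \delta/2)$ becomes $m = \bigO{R^{N-1}\, 3^{N-1}/(\epsilon^2\delta)}$, under which $\|\mat{Q}_P^T\mat{S}^T\mat{SB} - \mat{Q}_P^T\mat{B}\|_F^2 \leq \epsilon^2\|\mat{B}\|_F^2$ holds for any fixed $\mat{B}$ with probability at least $1-\delta/2$, which is \eqref{eq:structure2}.

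Choosing $m$ above both thresholds, namely $m = \bigO{R^{2(N-1)} 3^{N-1}/\delta + R^{N-1} 3^{N-1}/(\epsilon^2\delta)} = \bigO{(R^{N-1}\cdot 3^{N-1})/\delta \cdot (R^{N-1} + 1/\epsilon^2)}$, makes $\mat{S}$ meet both conditions, hence be $(1/2,\delta,\epsilon)$-accurate. I do not anticipate a genuine obstacle; the only steps needing care are the rescaling $\epsilon'=\epsilon/\sqrt{R^{N-1}}$ that absorbs $\|\mat{Q}_P\|_F^2$ when specializing \cref{lam:matmul_tensorsketch}, and the bookkeeping that apportions the failure probability as $\delta/2$ to each of the two events so that \cref{def:accurate_skeching} is matched verbatim.
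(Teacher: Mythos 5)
Your proposal is correct and follows essentially the same route as the paper's proof: invoke \cref{lam:singular_tensorsketch} with $\gamma=1/2$ to obtain \eqref{eq:structure1} at sketch size $\bigO{R^{2(N-1)}3^{N-1}/\delta}$, invoke \cref{lam:matmul_tensorsketch} on $\mat{Q}_P$ with the accuracy parameter rescaled by $1/\sqrt{R^{N-1}}$ to obtain \eqref{eq:structure2} at sketch size $\bigO{R^{N-1}3^{N-1}/(\epsilon^2\delta)}$, and sum the two bounds. Your explicit apportionment of $\delta/2$ to each condition is slightly more careful than the paper's bookkeeping, but the argument is the same.
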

\begin{proof}
Based on \cref{lam:singular_tensorsketch} with $\gamma=1/2$, for 
\[m \geq R^{2(N-1)}(2 + 3^{N-1})/(1/4 \cdot \delta/2) = \bigO{(R^{2(N-1)} \cdot 3^{N-1})/\delta},\] 
\eqref{eq:structure1} in \cref{def:accurate_skeching} will hold.
Based on \cref{lam:matmul_tensorsketch}, for $m\geq R^{N-1}(2 + 3^{N-1})/(\epsilon^2\delta)$, 
\[
\|\mat{Q}_P^T\mat{S}^T \mat{SB} - \mat{Q}_P^T\mat{B}\|_F^2\leq \frac{\epsilon^2}{R^{N-1}} \cdot \|\mat{Q}_P\|_F^2\cdot \|\mat{B}\|_F^2 = \epsilon^2\|\mat{B}\|_F^2,
\]
thus \eqref{eq:structure2} in \cref{def:accurate_skeching} will hold. Therefore, we need
\begin{align*}
 m &= \bigO{(R^{2(N-1)} \cdot 3^{N-1})/\delta + (R^{(N-1)} \cdot 3^{N-1})/(\epsilon^2\delta)} \\
&= \bigO{(R^{(N-1)} \cdot 3^{N-1})/\delta \cdot (R^{(N-1)}  + 1/\epsilon^2)}.   
\end{align*}
\end{proof}

Using \cref{lem:structure_tensorsketch}, we can then easily derive the upper bounds for both unconstrained and rank-constrained linear least squares with TensorSketch.
\begin{theorem}[TensorSketch for Unconstrained Linear Least Squares]
\label{thm:tensorsketch-ucls}
Given a full-rank matrix $\mat{P}\in\R^{s^{N-1}\times R^{N-1}}$ with $s>R$, and $\mat{B}\in \mathbb{R}^{s^{N-1}\times n}$.
Let $\mat{S}\in\R^{m \times s^{N-1}}$ be an order $N-1$ TensorSketch matrix.
Let $\mat{\widetilde{X}}_{\text{opt}}= \arg\min_{\mat{X}} \fnrm{\mat{SPX}-\mat{SB}}$ and $\mat{X}_{\text{opt}} = \arg\min_{\mat{X}} \left\|\mat{PX}-\mat{B}\right\|_F$. 
With 
\begin{equation}\label{eq:size_tensorsketch_ucls}
  m = \bigO{(R^{(N-1)} \cdot 3^{N-1})/\delta \cdot (R^{(N-1)}  + 1/\epsilon)},  
\end{equation}
the approximation,
$
\left\|\mat{A}\mat{\widetilde{X}}_{\text{opt}}- \mat{B}\right\|_F^2	 \leq \left(1+\bigO{\epsilon}\right) \Big\|\mat{A}\mat{X}_{\text{opt}}- \mat{B}\Big\|_F^2,
$
holds with probability at least $1-\delta$.
\end{theorem}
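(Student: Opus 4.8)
The plan is to deduce \cref{thm:tensorsketch-ucls} from \cref{lem:structure_tensorsketch} and \cref{thm:structure-ls}, exploiting the fact that for the \emph{unconstrained} problem the optimal residual is orthogonal to the column space of $\mat{P}$, so the residual obeys a Pythagorean identity. As a result, it suffices to control the solution perturbation only to \emph{square-root} accuracy, and this is exactly what saves a factor $1/\epsilon$ relative to the $1/\epsilon^2$ appearing in \cref{lem:structure_tensorsketch}.

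First I would record the decomposition. Writing $\mat{B}^{\perp} = \mat{P}\mat{X}_{\text{opt}} - \mat{B}$, the normal equations give $\mat{P}^T\mat{B}^{\perp} = \mat{0}$, so $\mat{B}^{\perp}$ is orthogonal to $\Range(\mat{P})$ and hence, for any $\mat{X}$,
\[
\|\mat{P}\mat{X} - \mat{B}\|_F^2 = \|\mat{P}\mat{X} - \mat{P}\mat{X}_{\text{opt}}\|_F^2 + \|\mat{B}^{\perp}\|_F^2 .
\]
Applying this with $\mat{X} = \mat{\widetilde{X}}_{\text{opt}}$, it remains only to bound $\|\mat{P}\mat{\widetilde{X}}_{\text{opt}} - \mat{P}\mat{X}_{\text{opt}}\|_F^2$ by $\bigO{\epsilon}\,\|\mat{B}^{\perp}\|_F^2$.

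Next I would invoke \cref{lem:structure_tensorsketch} with its accuracy parameter set to $\sqrt{\epsilon}$ in place of $\epsilon$. Tracing the two requirements in that lemma's proof: the singular-value condition \eqref{eq:structure1} (with $\gamma = 1/2$) needs $m = \bigO{(R^{2(N-1)}\cdot 3^{N-1})/\delta}$, independent of the accuracy parameter, while the approximate-matrix-multiplication condition \eqref{eq:structure2} at accuracy $\sqrt{\epsilon}$ (via \cref{lam:matmul_tensorsketch}) needs only $m = \bigO{(R^{(N-1)}\cdot 3^{N-1})/(\epsilon\delta)}$; adding these reproduces exactly \eqref{eq:size_tensorsketch_ucls}. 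Hence an order $N-1$ TensorSketch matrix $\mat{S}$ of that size is a $(1/2,\delta,\sqrt{\epsilon})$-accurate sketching matrix for $\mat{P}$, and \cref{thm:structure-ls} applied with accuracy $\sqrt{\epsilon}$ gives, with probability at least $1-\delta$,
\[
\|\mat{P}\mat{\widetilde{X}}_{\text{opt}} - \mat{P}\mat{X}_{\text{opt}}\|_F^2 \leq \bigO{(\sqrt{\epsilon})^2}\,\|\mat{B}^{\perp}\|_F^2 = \bigO{\epsilon}\,\|\mat{B}^{\perp}\|_F^2 .
\]
Substituting into the Pythagorean identity and using $\|\mat{B}^{\perp}\|_F = \|\mat{P}\mat{X}_{\text{opt}} - \mat{B}\|_F$ yields $\|\mat{P}\mat{\widetilde{X}}_{\text{opt}} - \mat{B}\|_F^2 \leq (1+\bigO{\epsilon})\,\|\mat{P}\mat{X}_{\text{opt}} - \mat{B}\|_F^2$, which is the assertion (the matrix written $\mat{A}$ in the statement being $\mat{P}$).

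There is no substantive obstacle beyond this bookkeeping; the one point to get right is that passing to target accuracy $\sqrt{\epsilon}$ in the matrix-multiplication bound costs $1/\epsilon$, not $1/\epsilon^2$, and that this is legitimate precisely because the unconstrained residual degrades only additively in its squared optimal value (Pythagoras), so an $\bigO{\epsilon}$-relative error in $\|\mat{P}\mat{\widetilde{X}}_{\text{opt}} - \mat{P}\mat{X}_{\text{opt}}\|_F^2$ already produces an $\bigO{\epsilon}$-relative residual. This is exactly why the unconstrained bound in \cref{tab:sketch-compare} carries $1/\epsilon$ whereas the rank-constrained bound of \cref{thm:tensorsketch-rcls} carries $1/\epsilon^2$: in the rank-constrained case, controlling the Mirsky-type perturbation of the tail singular values of the solution forces the unconstrained-solution error itself to be $\bigO{\epsilon^2}$-relative.
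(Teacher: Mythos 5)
Your proposal is correct and follows essentially the same route as the paper: invoke \cref{lem:structure_tensorsketch} with accuracy parameter $\sqrt{\epsilon}$ to get the sketch size \eqref{eq:size_tensorsketch_ucls}, then apply \cref{thm:structure-ls}. The paper leaves the Pythagorean step (converting the $\bigO{\epsilon}\,\|\mat{B}^{\perp}\|_F^2$ bound on the solution perturbation into a $(1+\bigO{\epsilon})$ relative residual bound) implicit, and you correctly make it explicit along with the reason the unconstrained case only costs $1/\epsilon$ rather than $1/\epsilon^2$.
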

\begin{proof}
Based on \cref{thm:structure-ls}, to prove this theorem, we derive the sketch size $m$ sufficient to make the sketching matrix $(1/2, \delta, \sqrt{\epsilon})$-accurate. According to \cref{lem:structure_tensorsketch}, the sketch size \eqref{eq:size_tensorsketch_ucls} is sufficient for being $(1/2, \delta, \sqrt{\epsilon})$-accurate.
\end{proof}

\begin{proof}[Proof of \cref{thm:tensorsketch-rcls}]
Based on \cref{thm:structure-rcls}, to prove this theorem, we derive the sketch size $m$ sufficient to make the sketching matrix $(1/2, \delta, \epsilon)$-accurate. 
According to \cref{lem:structure_tensorsketch}, the sketch size
 \[m= \bigO{(R^{(N-1)} \cdot 3^{N-1})/\delta \cdot (R^{(N-1)}  + 1/\epsilon^2)}\] is sufficient for being $(1/2, \delta, \epsilon)$-accurate.
\end{proof}

\subsection{Leverage Score Sampling for Unconstrained \& Rank-constrained Least Squares}
\label{subsec:leverage_background}

In this section, we first give the sketch size bound that is sufficient for the leverage score sampling matrix to be ab $(1/2, \delta, \epsilon)$-accurate sketching matrix according to \cref{lem:structure_leverage}. 
Using \cref{lem:structure_leverage}, we can then easily derive the upper bounds for both unconstrained and rank-constrained linear least squares with leverage score sampling.
To establish these results, we leverage two lemmas.
Lemma~\ref{lam:matmul_leverage} bounds the sketch size sufficient to reach certain matrix multiplication accuracy, while Lemma~\ref{lam:singular_leverage} bounds the singular values of the sketched matrix obtained from applying leverage score sampling to a matrix with orthonormal columns. 
These first two lemmas follow from prior work, and we direct readers to references for detailed proofs of both lemmas.

\begin{lemma}[Approximate Matrix Multiplication with Leverage Score Sampling~\cite{larsen2020practical}]\label{lam:matmul_leverage}
Given matrices $\mat{P}\in\R^{s^{N-1}\times R^{N-1}}$ consists of orthonormal columns and $\mat{B}\in \mathbb{R}^{s^{N-1}\times n}$.
Let $\mat{S}\in\R^{m \times s^{N-1}}$ be a leverage score sampling matrix for $\mat{P}$.
For $m \geq 1/(\epsilon^2\delta) $,
the approximation,
\[
\|\mat{P}^T\mat{S}^T \mat{SB} - \mat{P}^T\mat{B}\|_F^2\leq \epsilon^2 \cdot \|\mat{P}\|_F^2\cdot \|\mat{B}\|_F^2,
\]
holds with probability at least $1-\delta$.
\end{lemma}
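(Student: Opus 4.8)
The plan is to recognize this as a standard approximate matrix multiplication bound for importance sampling, specialized to the leverage-score distribution, and to prove it via a second-moment (variance) computation followed by Markov's inequality --- the same two-step template behind \cref{lam:matmul_tensorsketch}, but without the $3^{N-1}$ overhead since here the sampling probabilities are adapted to $\mat{P}$.

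First I would unfold the sketch. By \cref{def:leverage_sampling}, $\mat{S} = \mat{D\Omega}$, so $\mat{S}^T\mat{S} = \sum_{j=1}^m \tfrac{1}{m p_{i_j}}\vcr{e}_{i_j}\vcr{e}_{i_j}^T$, where at each step $j$ an index $i_j \in [s^{N-1}]$ is drawn independently with probability $p_i = \ell_i(\mat{P})/\rank(\mat{P})$. This lets us write $\mat{P}^T\mat{S}^T\mat{S}\mat{B} = \frac{1}{m}\sum_{j=1}^m \mat{Z}_j$ with i.i.d.\ terms $\mat{Z}_j := p_{i_j}^{-1}\mat{P}(i_j,:)^T\mat{B}(i_j,:)$. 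A one-line expectation computation over a single draw gives $\E[\mat{Z}_1] = \sum_i p_i\,p_i^{-1}\mat{P}(i,:)^T\mat{B}(i,:) = \mat{P}^T\mat{B}$ (rows with $p_i = 0$ are dropped and contribute nothing to either side), so the estimator is unbiased.

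Next I would compute the second moment of one term, which is where the choice of $p_i$ pays off. Since $\mat{P}$ has orthonormal columns we may take $\mat{Q}_P = \mat{P}$, so $\ell_i(\mat{P}) = \vnrm{\mat{P}(i,:)}^2$ and $\rank(\mat{P}) = \fnrm{\mat{P}}^2 = R^{N-1}$, hence $p_i = \vnrm{\mat{P}(i,:)}^2/\fnrm{\mat{P}}^2$. Substituting, $\E[\fnrm{\mat{Z}_1}^2] = \sum_i p_i^{-1}\vnrm{\mat{P}(i,:)}^2\vnrm{\mat{B}(i,:)}^2 = \fnrm{\mat{P}}^2\sum_i \vnrm{\mat{B}(i,:)}^2 = \fnrm{\mat{P}}^2\fnrm{\mat{B}}^2$; the leverage-score weights cancel the row norms of $\mat{P}$ exactly, which is the crux. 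Because the $\mat{Z}_j$ are i.i.d.\ with mean $\mat{P}^T\mat{B}$, the average satisfies $\E[\fnrm{\mat{P}^T\mat{S}^T\mat{S}\mat{B} - \mat{P}^T\mat{B}}^2] = \tfrac{1}{m}\E[\fnrm{\mat{Z}_1 - \mat{P}^T\mat{B}}^2] \le \tfrac{1}{m}\E[\fnrm{\mat{Z}_1}^2] = \tfrac{1}{m}\fnrm{\mat{P}}^2\fnrm{\mat{B}}^2$. Finally, Markov's inequality yields $\Pr[\fnrm{\mat{P}^T\mat{S}^T\mat{S}\mat{B} - \mat{P}^T\mat{B}}^2 > \epsilon^2\fnrm{\mat{P}}^2\fnrm{\mat{B}}^2] \le \tfrac{1}{m\epsilon^2} \le \delta$ once $m \ge 1/(\epsilon^2\delta)$, which is the claim.

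There is no real obstacle here; the computation is routine. The only points demanding care are (i) confirming that the probabilities dictated by \cref{def:leverage_sampling} are precisely the ones that make the per-sample second moment equal $\fnrm{\mat{P}}^2\fnrm{\mat{B}}^2$ rather than something larger --- any other distribution would inflate the constant --- and (ii) the bookkeeping around zero-leverage-score rows. If one wanted the stronger $m = \bigO{\log(1/\delta)/\epsilon^2}$ scaling rather than the $1/(\epsilon^2\delta)$ here, Markov would be replaced by a matrix-Bernstein concentration argument, but that refinement is not needed for the stated bound.
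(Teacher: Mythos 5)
Your proof is correct. The paper itself does not prove \cref{lam:matmul_leverage} --- it defers to the cited reference --- but your argument (write $\mat{P}^T\mat{S}^T\mat{S}\mat{B}$ as an average of i.i.d.\ unbiased rank-one terms, observe that the leverage-score probabilities $p_i = \vnrm{\mat{P}(i,:)}^2/\fnrm{\mat{P}}^2$ make the per-sample second moment exactly $\fnrm{\mat{P}}^2\fnrm{\mat{B}}^2$, then apply Markov) is precisely the standard Drineas--Kannan--Mahoney-style derivation that underlies the cited result, and every step checks out, including the handling of zero-leverage rows and the use of $\E\fnrm{\mat{Z}_1 - \E\mat{Z}_1}^2 \le \E\fnrm{\mat{Z}_1}^2$.
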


\begin{lemma}[Singular Value Bound for Leverage Score Sampling~\cite{woodruff2014sketching}]\label{lam:singular_leverage}
Given a full-rank matrix $\mat{P}\in\R^{s^{N-1}\times R^{N-1}}$ with $s>R$, and $\mat{B}\in \mathbb{R}^{s^{N-1}\times n}$.
Let $\mat{S}\in\R^{m \times s^{N-1}}$ be a leverage score sampling matrix for $\mat{P}$.
For $m = \bigO{R^{(N-1)}\log(R^{(N-1)}/\delta)/\gamma^2}= \bigOt{R^{(N-1)}/\gamma^2}$,
each singular value $\sigma$ of $\mat{SQ}_P$ satisfies
\[
        1-\gamma\leq \sigma^2 \leq 1+\gamma
\]
with probability at least $1-\delta$, where  $\mat{Q}_P$ is an orthonormal basis for the column space of $\mat{P}$.
\end{lemma}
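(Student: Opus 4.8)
The plan is to prove this as a standard subspace-embedding statement via a matrix concentration inequality; the matrix $\mat{B}$ in the hypothesis plays no role and can be ignored. Write $n = s^{N-1}$ and $d = R^{N-1}$, and let $\mat{Q} = \mat{Q}_P \in \R^{n\times d}$ have orthonormal columns, so that $\ell_i(\mat{P}) = \|\mat{Q}(i,:)\|_2^2$, $\sum_{i=1}^n \ell_i(\mat{P}) = \Tr(\mat{Q}^T\mat{Q}) = d$, and the leverage-score sampling probabilities are $p_i = \ell_i(\mat{P})/d$. The conclusion that every singular value $\sigma$ of $\mat{S}\mat{Q}$ obeys $1-\gamma \le \sigma^2 \le 1+\gamma$ is equivalent to $\bigl\|(\mat{S}\mat{Q})^T(\mat{S}\mat{Q}) - \mat{I}_d\bigr\|_2 \le \gamma$, so it suffices to bound the spectral norm of this $d\times d$ perturbation.

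First I would express the sketched Gram matrix as a sum of independent rank-one terms. By \cref{def:leverage_sampling}, $\mat{S} = \mat{D}\mat{\Omega}$ draws indices $j_1,\dots,j_m \in [n]$ i.i.d.\ with $\Pr[j_t = i] = p_i$ and rescales the $t$-th sampled row by $1/\sqrt{m\,p_{j_t}}$, so
\[
(\mat{S}\mat{Q})^T(\mat{S}\mat{Q}) = \sum_{t=1}^m \mat{X}_t, \qquad \mat{X}_t := \frac{1}{m\,p_{j_t}}\, \mat{Q}(j_t,:)^T\mat{Q}(j_t,:).
\]
Each $\mat{X}_t$ is PSD, and $\E[\mat{X}_t] = \sum_{i=1}^n p_i \cdot \frac{1}{m p_i}\mat{Q}(i,:)^T\mat{Q}(i,:) = \frac{1}{m}\mat{Q}^T\mat{Q} = \frac{1}{m}\mat{I}_d$, hence $\E[\sum_t \mat{X}_t] = \mat{I}_d$. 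The decisive structural fact is the resulting \emph{deterministic} operator-norm bound: $\|\mat{X}_t\|_2 = \frac{1}{m p_{j_t}}\|\mat{Q}(j_t,:)\|_2^2 = \frac{\ell_{j_t}(\mat{P})}{m\,\ell_{j_t}(\mat{P})/d} = \frac{d}{m}$. Likewise $\E[\mat{X}_t^2] = \sum_i p_i \frac{\ell_i(\mat{P})}{(m p_i)^2}\mat{Q}(i,:)^T\mat{Q}(i,:) = \frac{d}{m^2}\mat{I}_d$, so the matrix variance satisfies $\bigl\|\sum_t \E[(\mat{X}_t-\E\mat{X}_t)^2]\bigr\|_2 \le \bigl\|\sum_t \E[\mat{X}_t^2]\bigr\|_2 = d/m$.

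With these estimates I would invoke a matrix Chernoff bound (Tropp's, for sums of independent bounded PSD matrices) applied to $\sum_t \mat{X}_t$: since $\lambda_{\min}(\E\sum_t\mat{X}_t) = \lambda_{\max}(\E\sum_t\mat{X}_t) = 1$ and $\mat{X}_t \preceq (d/m)\mat{I}_d$, it gives, for $\gamma \in (0,1)$,
\[
\Pr\bigl[\lambda_{\min}(\textstyle\sum_t\mat{X}_t) \le 1-\gamma \ \text{ or }\ \lambda_{\max}(\textstyle\sum_t\mat{X}_t) \ge 1+\gamma\bigr] \;\le\; 2d\,\exp\!\bigl(-c\,\gamma^2 m / d\bigr)
\]
for an absolute constant $c>0$ (equivalently one may use matrix Bernstein with variance $d/m$ and norm bound $2d/m$). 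Forcing the right-hand side below $\delta$ requires $m \ge (1/c)\,d\,\gamma^{-2}\log(2d/\delta) = \bigO{R^{N-1}\log(R^{N-1}/\delta)/\gamma^2} = \bigOt{R^{N-1}/\gamma^2}$, which is exactly the claimed bound; on the complementary event $\bigl\|(\mat{S}\mat{Q})^T(\mat{S}\mat{Q}) - \mat{I}_d\bigr\|_2 \le \gamma$, i.e.\ $1-\gamma \le \sigma^2 \le 1+\gamma$ for all singular values of $\mat{S}\mat{Q}$.

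There is no genuine obstacle — this is a classical argument — but two points need care: (i) establishing the deterministic bound $\|\mat{X}_t\|_2 = d/m$, which is precisely where the leverage-score probabilities $p_i \propto \ell_i(\mat{P})$ and the $1/\sqrt{m p_i}$ rescaling are used, and which is what replaces the useless $n/m$ factor that uniform sampling would produce; and (ii) tracking the ambient-dimension factor $d = R^{N-1}$ in the matrix-Chernoff failure probability, which is the source of the extra $\log(R^{N-1}/\delta)$ (hence the $\widetilde{\mathcal{O}}$) beyond the naive $R^{N-1}/\gamma^2$. For the Kronecker-structured $\mat{P}$ at hand, \cref{lem:lev_bound} separately guarantees that these leverage scores, and thus the sampling distribution, are cheap to form and sample, but that is independent of the concentration argument itself.
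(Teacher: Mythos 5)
Your proof is correct: the decomposition of $(\mat{S}\mat{Q})^T(\mat{S}\mat{Q})$ into i.i.d.\ rank-one PSD terms, the deterministic bound $\|\mat{X}_t\|_2 = d/m$ coming from the leverage-score probabilities, and the matrix Chernoff step all check out, and they yield exactly the stated $m = \bigO{R^{N-1}\log(R^{N-1}/\delta)/\gamma^2}$. The paper does not prove this lemma itself---it defers to the cited reference \cite{woodruff2014sketching}---and your argument is essentially the standard subspace-embedding proof given there, so there is nothing further to reconcile.
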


\begin{lemma}[$(1/2, \delta, \epsilon)$-accurate Leverage Score Sampling Matrix]\label{lem:structure_leverage}
Let $m=  \bigO{R^{N-1}/(\epsilon^2\delta)}$ denote the sketch size,
then the leverage score sampling matrix $\mat{S}\in\R^{m \times s^{N-1}}$ is a $(1/2, \delta, \epsilon)$-accurate sketching matrix for the full-rank matrix $\mat{P}\in\R^{s^{N-1}\times R^{N-1}}$.
\end{lemma}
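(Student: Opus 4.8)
The plan is to follow exactly the template of the proof of \cref{lem:structure_tensorsketch}: verify the two defining properties of a $(1/2,\delta,\epsilon)$-accurate sketching matrix from \cref{def:accurate_skeching} one at a time, each with failure probability at most $\delta/2$, by invoking the two leverage-score lemmas with suitably rescaled parameters and then taking $m$ to be the larger of the two resulting sketch-size bounds.

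First I would handle the singular-value condition \eqref{eq:structure1}. Applying \cref{lam:singular_leverage} with $\gamma = 1/2$ and failure probability $\delta/2$ shows that a sketch size $m = \bigO{R^{N-1}\log(R^{N-1}/\delta)} = \bigOt{R^{N-1}}$ is enough to guarantee $1/2 \le \sigma^2 \le 3/2$ for every singular value $\sigma$ of $\mat{S}\mat{Q}_P$ with probability at least $1-\delta/2$. (No change of basis is needed here, since \cref{lam:singular_leverage} already speaks directly about $\mat{S}\mat{Q}_P$ when $\mat{S}$ is a leverage-score sampling matrix for $\mat{P}$.) Next, for the approximate-matrix-multiplication condition \eqref{eq:structure2}, I would apply \cref{lam:matmul_leverage} to the pair $(\mat{Q}_P,\mat{B})$: the matrix $\mat{Q}_P$ has orthonormal columns as the lemma requires, and because the leverage scores of $\mat{P}$ and of any orthonormal basis $\mat{Q}_P$ of its column space coincide (\cref{def:leverages_scores}), the leverage-score sampling matrix $\mat{S}$ built from $\mat{P}$ is simultaneously a valid leverage-score sampling matrix for $\mat{Q}_P$. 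Choosing the rescaled accuracy parameter $\tilde\epsilon = \epsilon/\sqrt{R^{N-1}}$ and failure probability $\delta/2$, and using $\|\mat{Q}_P\|_F^2 = R^{N-1}$, \cref{lam:matmul_leverage} gives, for $m \ge 1/(\tilde\epsilon^{\,2}\cdot\delta/2) = 2R^{N-1}/(\epsilon^2\delta)$,
\[
\bigl\|\mat{Q}_P^T\mat{S}^T\mat{S}\mat{B} - \mat{Q}_P^T\mat{B}\bigr\|_F^2 \;\le\; \tilde\epsilon^{\,2}\,\|\mat{Q}_P\|_F^2\,\|\mat{B}\|_F^2 \;=\; \frac{\epsilon^2}{R^{N-1}}\cdot R^{N-1}\cdot\|\mat{B}\|_F^2 \;=\; \epsilon^2\|\mat{B}\|_F^2,
\]
which is precisely \eqref{eq:structure2} with probability at least $1-\delta/2$. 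Taking $m$ to be the maximum of the two bounds, namely $m = \bigO{R^{N-1}/(\epsilon^2\delta)}$, both conditions of \cref{def:accurate_skeching} hold, so $\mat{S}$ is $(1/2,\delta,\epsilon)$-accurate.

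The parameter bookkeeping (the factor-of-two losses in $\delta$, the $\sqrt{R^{N-1}}$ rescaling in $\tilde\epsilon$) is routine; the only point that genuinely deserves care — and the main obstacle — is reconciling the logarithmic factor $\log(R^{N-1}/\delta)$ coming from the subspace-embedding bound \cref{lam:singular_leverage} with the clean target $\bigO{R^{N-1}/(\epsilon^2\delta)}$. I would address this by observing that the matrix-multiplication term $R^{N-1}/(\epsilon^2\delta)$ dominates $\bigOt{R^{N-1}}$ under the usual regime of small $\epsilon,\delta$, or equivalently by noting that the statement, like the notation $\widetilde{\mathcal{O}}$ used elsewhere in the paper, is understood to suppress such logarithmic factors; in either reading the stated bound $m = \bigO{R^{N-1}/(\epsilon^2\delta)}$ is the effective one.
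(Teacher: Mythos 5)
Your proof is correct and follows essentially the same route as the paper's: apply \cref{lam:singular_leverage} with $\gamma=1/2$ for condition \eqref{eq:structure1}, apply \cref{lam:matmul_leverage} to $\mat{Q}_P$ with accuracy parameter $\epsilon/\sqrt{R^{N-1}}$ for condition \eqref{eq:structure2}, and combine. Your added bookkeeping (splitting $\delta$, noting that the sampling matrix for $\mat{P}$ is also one for $\mat{Q}_P$, and flagging the suppressed $\log(R^{N-1}/\delta)$ factor in the subspace-embedding term) only makes explicit what the paper leaves implicit.
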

\begin{proof}
Based on \cref{lam:singular_leverage} with $\gamma=1/2$, for $m = \bigOt{R^{(N-1)}}$, \eqref{eq:structure1} in \cref{def:accurate_skeching} will hold.
Based on \cref{lam:matmul_leverage}, for $m= \bigO{R^{N-1}/(\epsilon^2\delta)}$, 
\[
\|\mat{Q}_P^T\mat{S}^T \mat{SB} - \mat{Q}_P^T\mat{B}\|_F^2\leq \frac{\epsilon^2}{R^{N-1}} \cdot \|\mat{Q}_P\|_F^2\cdot \|\mat{B}\|_F^2 = \epsilon^2\|\mat{B}\|_F^2,
\]
thus \eqref{eq:structure2} in \cref{def:accurate_skeching} will hold. Thus we need
$m= \bigOt{R^{(N-1)}} + \bigO{R^{N-1}/(\epsilon^2\delta)}$$=\bigO{R^{N-1}/(\epsilon^2\delta)}$.
\end{proof}

\begin{theorem}[Leverage Score Sampling for Unconstrained Linear Least Squares]
\label{thm:leverage-ucls}
Given a full-rank matrix $\mat{P}\in\R^{s^{N-1}\times R^{N-1}}$ with $s>R$, and $\mat{B}\in \mathbb{R}^{s^{N-1}\times n}$.
Let $\mat{S}\in\R^{m \times s^{N-1}}$ be a leverage score sampling matrix.
Let $\mat{\widetilde{X}}_{\text{opt}}= \arg\min_{\mat{X}} \fnrm{\mat{SPX}-\mat{SB}}$ and $\mat{X}_{\text{opt}} = \arg\min_{\mat{X}} \left\|\mat{PX}-\mat{B}\right\|_F$. 
With 
\begin{equation}\label{eq:size_leverage_ucls}
  m = \bigO{R^{N-1}/(\epsilon\delta)},  
\end{equation}
the approximation,
$
\left\|\mat{A}\mat{\widetilde{X}}_{\text{opt}}- \mat{B}\right\|_F^2	 \leq \left(1+\bigO{\epsilon}\right) \Big\|\mat{A}\mat{X}_{\text{opt}}- \mat{B}\Big\|_F^2,
$
holds with probability at least $1-\delta$.
\end{theorem}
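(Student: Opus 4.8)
The plan is to reduce the statement to the abstract sketching framework of \cref{def:accurate_skeching}, exactly as was done for TensorSketch in \cref{thm:tensorsketch-ucls}. The engine is \cref{thm:structure-ls}: if $\mat{S}$ is a $(1/2,\delta,\epsilon')$-accurate sketching matrix for $\mat{P}$, then the sketched solution $\mat{\widetilde{X}}_{\text{opt}}$ satisfies $\|\mat{P}\mat{\widetilde{X}}_{\text{opt}} - \mat{P}\mat{X}_{\text{opt}}\|_F^2 \le \bigO{(\epsilon')^2}\,\|\mat{B}^{\perp}\|_F^2$ with probability at least $1-\delta$, where $\mat{B}^{\perp} = \mat{P}\mat{X}_{\text{opt}} - \mat{B}$. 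So the first step is simply to quote this lemma with a to-be-chosen accuracy parameter $\epsilon'$.

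The second step converts the solution-error bound into a residual-error bound. Since each column of the optimal residual $\mat{B}^{\perp}$ is orthogonal to the column space of $\mat{P}$, the Pythagorean identity gives $\|\mat{P}\mat{\widetilde{X}}_{\text{opt}} - \mat{B}\|_F^2 = \|\mat{B}^{\perp}\|_F^2 + \|\mat{P}\mat{\widetilde{X}}_{\text{opt}} - \mat{P}\mat{X}_{\text{opt}}\|_F^2 \le (1 + \bigO{(\epsilon')^2})\,\|\mat{B}^{\perp}\|_F^2 = (1+\bigO{(\epsilon')^2})\,\|\mat{P}\mat{X}_{\text{opt}} - \mat{B}\|_F^2$. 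Hence to reach a $(1+\bigO{\epsilon})$ relative residual it suffices to make $\mat{S}$ a $(1/2,\delta,\sqrt{\epsilon})$-accurate sketching matrix, i.e.\ to take $\epsilon' = \sqrt{\epsilon}$.

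The third step supplies the sketch size. I would invoke \cref{lem:structure_leverage} with its accuracy parameter set to $\sqrt{\epsilon}$: a leverage score sampling matrix with $m = \bigO{R^{N-1}/\big((\sqrt{\epsilon})^2\,\delta\big)} = \bigO{R^{N-1}/(\epsilon\delta)}$ rows is $(1/2,\delta,\sqrt{\epsilon})$-accurate for any full-rank $\mat{P}\in\R^{s^{N-1}\times R^{N-1}}$. Chaining this with the previous two steps yields the claimed bound with probability at least $1-\delta$, and matches \eqref{eq:size_leverage_ucls}.

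I do not expect a genuine analytical obstacle here: all the heavy lifting is already packaged in the cited subspace-embedding and approximate-matrix-multiplication lemmas (\cref{lam:matmul_leverage} and \cref{lam:singular_leverage}) that feed \cref{lem:structure_leverage}. The only point that deserves care is the $\sqrt{\epsilon}$ rescaling: asking for $(1/2,\delta,\sqrt{\epsilon})$ rather than $(1/2,\delta,\epsilon)$ accuracy weakens the residual guarantee from $(1+\bigO{\epsilon^2})$ to $(1+\bigO{\epsilon})$ but saves a full factor of $1/\epsilon$ in the sketch size, which is exactly what produces the $1/(\epsilon\delta)$ bound instead of the $1/(\epsilon^2\delta)$ bound used for the rank-constrained case; one just needs to note that the $\widetilde{O}(R^{N-1})$ singular-value-embedding contribution in \cref{lem:structure_leverage} is dominated by $R^{N-1}/(\epsilon\delta)$ in the regime of interest, so it can be absorbed into the stated asymptotics.
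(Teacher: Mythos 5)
Your proposal is correct and follows exactly the paper's route: invoke \cref{thm:structure-ls} with accuracy parameter $\sqrt{\epsilon}$, then use \cref{lem:structure_leverage} to show that $m = \bigO{R^{N-1}/(\epsilon\delta)}$ suffices for $(1/2,\delta,\sqrt{\epsilon})$-accuracy. Your explicit Pythagorean step converting the solution-error bound into the residual bound is left implicit in the paper, but it is the intended argument, so the two proofs coincide.
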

\begin{proof}
Based on \cref{thm:structure-ls}, to prove this theorem, we derive the sample size $m$ sufficient to make the sketching matrix $(1/2, \delta, \sqrt{\epsilon})$-accurate. According to \cref{lem:structure_leverage}, the sketch size \eqref{eq:size_leverage_ucls} is sufficient for being $(1/2, \delta, \sqrt{\epsilon})$-accurate.
\end{proof}

\begin{proof}[Proof of \cref{thm:leverage-rcls}]
Based on \cref{thm:structure-rcls}, to prove this theorem, we derive the sketch size $m$ sufficient to make the sketching matrix $(1/2, \delta, \epsilon)$-accurate. 
According to \cref{lem:structure_leverage}, the sketch size 
$\bigO{R^{N-1}/(\epsilon^2\delta)}$ is sufficient for being $(1/2, \delta, \epsilon)$-accurate.
\end{proof}

\section{TensorSketch for General Constrained Least Squares}
In this section, we provide sketch size upper bound of TensorSketch for general constrained linear least squares problems. 
\begin{theorem}[TensorSketch for General Constrained Linear Least Squares]
\label{thm:ts-rcls-general}
Given a full-rank matrix $\mat{P}\in\R^{s^{N-1}\times R^{N-1}}$ with $s>R$, and $\mat{B}\in \mathbb{R}^{s^{N-1}\times n}$.
Let $\mat{S}\in\R^{m \times s^{N-1}}$ be an order $N-1$ TensorSketch matrix.
Let $\mat{\widetilde{X}}_{\text{opt}}=\arg\min_{\mat{X\in\mathcal{C}}} \fnrm{\mat{SPX}-\mat{SB}}$, and let $\mat{X}_{\text{opt}} = \arg\min_{\mat{X\in\mathcal{C}}} \left\|\mat{PX}-\mat{B}\right\|_F$. 
With 
\[m= \bigO{nR^{2(N-1)}\cdot 3^{N-1}/(\epsilon^2\delta)},
\]
the  approximation,
	\begin{equation}\label{eq:bound_general_ts}
\left\|\mat{P}\mat{\widetilde{X}}_{\text{opt}}- \mat{B}\right\|_F^2	 \leq \left(1+\bigO{\epsilon}\right) \Big\|\mat{P}\mat{X}_{\text{opt}}- \mat{B}\Big\|_F^2,	
	\end{equation}	
holds with probability at least $1-\delta$.
\end{theorem}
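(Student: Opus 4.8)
The plan is to derive \eqref{eq:bound_general_ts} from a single \emph{uniform} sketching guarantee, namely $\fnrm{\mat{S}(\mat{P}\mat{X}-\mat{B})}^2 \in (1\pm\epsilon)\fnrm{\mat{P}\mat{X}-\mat{B}}^2$ for \emph{every} $\mat{X}$, in particular for every $\mat{X}\in\mathcal{C}$. Granting this, the theorem is immediate from the usual argmin-transfer step: since $\mat{\widetilde{X}}_{\text{opt}}$ minimizes the sketched objective over $\mathcal{C}$ and $\mat{X}_{\text{opt}}\in\mathcal{C}$,
\[
(1-\epsilon)\bigl\|\mat{P}\mat{\widetilde{X}}_{\text{opt}}-\mat{B}\bigr\|_F^2 \le \bigl\|\mat{S}\mat{P}\mat{\widetilde{X}}_{\text{opt}}-\mat{S}\mat{B}\bigr\|_F^2 \le \bigl\|\mat{S}\mat{P}\mat{X}_{\text{opt}}-\mat{S}\mat{B}\bigr\|_F^2 \le (1+\epsilon)\bigl\|\mat{P}\mat{X}_{\text{opt}}-\mat{B}\bigr\|_F^2,
\]
so $\fnrm{\mat{P}\mat{\widetilde{X}}_{\text{opt}}-\mat{B}}^2 \le \tfrac{1+\epsilon}{1-\epsilon}\fnrm{\mat{P}\mat{X}_{\text{opt}}-\mat{B}}^2 = (1+\bigO{\epsilon})\fnrm{\mat{P}\mat{X}_{\text{opt}}-\mat{B}}^2$.

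To obtain the uniform guarantee I would argue one column of $\mat{B}$ at a time. For each $j\in[n]$ and every $\mat{X}$, the $j$-th column of $\mat{P}\mat{X}-\mat{B}$ lies in the subspace $U_j := \Range\bigl([\,\mat{P}\ \ \mat{B}(:,j)\,]\bigr)$, whose dimension is at most $R^{N-1}+1$ independently of $\mat{X}$. Hence it suffices to make $\mat{S}$ an $\epsilon$-subspace embedding of each $U_j$: if $\mat{W}_j$ is an orthonormal basis of $U_j$ and every singular value of $\mat{S}\mat{W}_j$ lies in $[\sqrt{1-\epsilon},\sqrt{1+\epsilon}]$, then for every $v=\mat{W}_j c\in U_j$ we have $\|\mat{S}v\|_2^2 = c^T(\mat{S}\mat{W}_j)^T(\mat{S}\mat{W}_j)c \in (1\pm\epsilon)\|c\|_2^2 = (1\pm\epsilon)\|v\|_2^2$. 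Summing over the $n$ columns of $\mat{P}\mat{X}-\mat{B}$ yields exactly the uniform Frobenius bound, with no coupling between columns and hence no dependence on the structure of $\mathcal{C}$.

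It then remains to size the sketch so that $\mat{S}$ is simultaneously an $\epsilon$-embedding of all the $U_j$. The singular-value control for a single $U_j$ is \cref{lam:singular_tensorsketch} with $\mat{Q}_P$ replaced by $\mat{W}_j$: its proof uses only $\fnrm{\mat{W}_j}^2 = \dim U_j \le R^{N-1}+1$ together with the TensorSketch matrix-multiplication bound \cref{lam:matmul_tensorsketch}, so with distortion $\gamma = \epsilon$ and per-event failure probability $\delta/n$ it requires $m = \bigO{(R^{N-1}+1)^2(2+3^{N-1})/(\epsilon^2\cdot(\delta/n))} = \bigO{nR^{2(N-1)}3^{N-1}/(\epsilon^2\delta)}$, which is exactly the stated size. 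A union bound over $j\in[n]$ makes all $n$ embedding events hold simultaneously with probability at least $1-\delta$, completing the argument.

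The main obstacle — and the reason the bound is looser than the unconstrained bound \eqref{eq:size_tensorsketch_ucls} and the rank-constrained bound \eqref{eq:samplesize_ts} — is that for a general feasible set $\mathcal{C}$ we cannot exploit the residual orthogonality $\mat{Q}_P^T\mat{B}^{\perp}=0$ that powered \cref{thm:structure-ls} and \cref{thm:structure-rcls}; the sketch must be accurate not only on the $R^{N-1}$-dimensional column space of $\mat{P}$ but on the larger subspaces $U_j$ that also involve $\mat{B}$. The union bound over the $n$ columns of $\mat{B}$ is what introduces the extra factor $n$, while the squared dimension $(R^{N-1})^2$ rather than $R^{N-1}$ is inherited from the Frobenius-norm matrix-multiplication estimate underlying \cref{lam:singular_tensorsketch}. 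An alternative is to use a single subspace embedding for $\Range([\,\mat{P}\ \ \mat{B}\,])$, but this would replace $nR^{2(N-1)}$ by $(R^{N-1}+n)^2$, which is worse when $n$ is large, so the per-column union bound is the more economical route to the claimed bound.
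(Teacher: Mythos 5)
Your proposal is correct and follows essentially the same route as the paper's proof: a per-column subspace embedding of $\Range([\,\mat{P}\ \ \mat{B}(:,j)\,])$ via \cref{lam:singular_tensorsketch} with distortion $\epsilon$ and per-column failure probability $\delta/n$, a union bound over the $n$ columns, and the standard argmin-transfer step. The sketch-size accounting $m = \bigO{n(R^{N-1}+1)^2 3^{N-1}/(\epsilon^2\delta)}$ matches the paper's exactly.
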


\begin{proof}
The proof is similar to the analysis performed in \cite{woodruff2014sketching} for other sketching techniques.
Let the $i$th column of $\mat{B}, \mat{X}$ be denoted $\vcr{b}_i, \vcr{x}_i$, respectively. We can express each column in the residual $\mat{PX} - \mat{B}$ as 
\[
\mat{P}\vcr{x}_i -\vcr{b}_i = 
\begin{bmatrix}
\mat{P} & \vcr{b}_i
\end{bmatrix}
\begin{bmatrix}
\vcr{x}_i \\ -1
\end{bmatrix}
:= 
\widetilde{\mat{P}}^{(i)}\vcr{y}_i.
\]
Based on \cref{lam:singular_tensorsketch}, let $m \geq n(R^{(N-1)}+1)^2(2 + 3^{N-1})/(\epsilon^2\delta)$, we have with probability at least $1 - \delta/n$ that for some $i\in[n]$, each singular value $\sigma$ of $\mat{SQ}_{\widetilde{P}^{(i)}}$ satisfies
\[
        1-\epsilon\leq \sigma^2 \leq 1+\epsilon.
\]
This means for any $\vcr{y}_i \in \R^{R^{N-1} + 1}$, we have 
\begin{equation}\label{eq:bound-ts}
(1-\epsilon)\left\|\widetilde{\mat{P}}^{(i)}\vcr{y}_i\right\|_2^2\leq\left\|\mat{S}\widetilde{\mat{P}}^{(i)}\vcr{y}_i\right\|_2^2 \leq  (1+\epsilon)\left\|\widetilde{\mat{P}}^{(i)}\vcr{y}_i\right\|_2^2.
\end{equation}
Using the union bound, \eqref{eq:bound-ts} implies that with probability at least $1 - \delta$,
\[
(1-\epsilon)\left\|\mat{P}\mat{\widetilde{X}}_{\text{opt}} - \mat{B}\right\|_F \leq 
\left\|\mat{S}\mat{P}\mat{\widetilde{X}}_{\text{opt}} - \mat{S}\mat{B}\right\|_F \quad \text{and} \quad 
\left\|\mat{S}\mat{P}\mat{X}_{\text{opt}} - \mat{S}\mat{B}\right\|_F
\leq (1+\epsilon)\left\|\mat{P}\mat{X}_{\text{opt}} - \mat{B}\right\|_F.
\]
Therefore, we have  
\begin{align*}
\left\|\mat{P}\mat{\widetilde{X}}_{\text{opt}} - \mat{B}\right\|_F &\leq 
\frac{1}{1-\epsilon}\left\|\mat{S}\mat{P}\mat{\widetilde{X}}_{\text{opt}} - \mat{S}\mat{B}\right\|_F \leq \frac{1}{1-\epsilon}
\left\|\mat{S}\mat{P}\mat{X}_{\text{opt}} - \mat{S}\mat{B}\right\|_F \\
&\leq 
\frac{1+\epsilon}{1-\epsilon}\left\|\mat{P}\mat{X}_{\text{opt}} - \mat{B}\right\|_F
=
(1+\bigO{\epsilon})\left\|\mat{P}\mat{X}_{\text{opt}} - \mat{B}\right\|_F.
\end{align*}
Therefore, $m= \bigO{nR^{2(N-1)} \cdot 3^{N-1}/(\epsilon^2\delta)}$ is sufficient for the approximation in \eqref{eq:bound_general_ts}.
\end{proof}

\label{appendix:general_ts}


\end{document}